\newcommand{\udensdash}[1]{%
    \tikz[baseline=(todotted.base)]{
        \node[inner sep=1pt,outer sep=0pt] (todotted) {#1};
        \draw[thick, densely dashed] (todotted.south west) -- (todotted.south east);
    }%
}%
\newtheorem{theorem}{Theorem}
\newtheorem{proposition}[theorem]{Proposition}
\newtheorem{lemma}[theorem]{Lemma}
\newtheorem{corollary}[theorem]{Corollary}
\theoremstyle{definition}
\newtheorem{definition}[theorem]{Definition}
\newtheorem{example}[theorem]{Example}
\theoremstyle{remark}
\newtheorem{remark}[theorem]{Remark}
\newcommand{\QQ}{\mathbb{Q}}             
\newcommand{\RR}{\mathbb{R}}             
\newcommand{\Pol}{\mathcal{P}}
\newcommand{\NN}{\mathbb{N}}             
\newcommand{\ZZ}{\mathbb{Z}}             
\newcommand{\xx}{{\mathbf{x}}}
\newcommand{\bb}{{\mathbf{b}}}
\newcommand{\hot}{\text{lex. gr.}}
\newcommand{\rank}{\operatorname{rank}}
\begin{document}

\title{Vector partition functions and Kronecker coefficients}
\author{Marni Mishna, Mercedes Rosas, Sheila Sundaram}
\date{\today}
\maketitle

\begin{abstract}
The Kronecker coefficients are the structure constants for the restriction of irreducible
representations of the general linear group $GL(n m)$ into irreducibles for the subgroup 
$GL(n)\times GL(m)$.
In this work we study the  quasipolynomial nature of the Kronecker function using elementary tools from polyhedral geometry. 
We  write the Kronecker function in terms of coefficients of a vector partition function. This allows us to define a new family of coefficients, the atomic Kronecker coefficients.
Our derivation is  explicit and self-contained, and gives a new exact formula and an upper bound for the Kronecker coefficients in the first nontrivial case.   \\
\noindent{\bf Keywords:}  Kronecker products, rational polytopes, vector partition functions
\noindent{\bf 2010 Subject classification:} 05E10, 05E05, 17B10
\end{abstract}


\section{Introduction} 

The \textit{subgroup restriction} or \textit{branching} problem investigates how an irreducible representation of a group $G$ decomposes into irreducibles  when restricted to a subgroup $H.$
 In this article, we study this branching for 
$GL(n)\times GL(m) $  viewed as a subgroup of $GL(n  m)$ via
the tensor product of matrices. The Kronecker coefficients are the structure constants for this branching.
 They are also important from a physicist's point of view.
Christandl, Harrow and Graeme have shown the relevance of Kronecker coefficients in the  study of the spectra of  bipartite quantum states with  two fixed marginal states, and studied the implications of their findings in quantum information theory, \cite{Christandl:Harrow:Mitchison,  Christandl:Mitchison, Christandl-PhD, CDW}. 
 The restriction of $GL(4)$ to $GL(2)\times GL(2)$ is of interest in nuclear physics where it is called Wigner supermultiplet theory, \cite{Wigner}.

Irreducible representations of $GL(n)$ are indexed by partitions
of length at most $n$. Therefore, the Kronecker coefficients are indexed by
triples of partitions of the same weight, and bounded lengths. We denote by
$g_{\mu,\nu, \lambda}$ the {\em  Kronecker coefficient indexed by 
 $\mu,\nu, \lambda.$ }
Alternatively, the Kronecker coefficients $g_{\mu,\nu,\lambda}$ can be defined by the expression:
\begin{align}\label{ourBranching}
s_{\lambda}[XY]=s_{\lambda}(x_1y_1, x_1y_2\cdots,x_{n}y_{m})= \sum_{\mu, \nu} g_{\mu,\nu,\lambda} \, s_\mu[X] s_\nu[Y]
\end{align}
where all partitions appearing in the equation have the same weight, and lengths bounded by $n, m $ and $nm$ respectively, and the $s_\lambda$'s denote the Schur polynomials.

 The \emph{Kronecker function} $\kappa_{m,n,l}$ is  a function  defined on triples of partitions $(\mu,  \nu, \lambda)$
of lengths bounded by $n, m $ and $l$ respectively, by
\begin{equation}
\kappa_{n,m,l} (\mu,  \nu, \lambda) = \kappa_{n,m,l} (\mu_1, \ldots, \mu_n, \nu_1,\ldots, \nu_m, \lambda_1, \ldots, \lambda_l): = g_{\mu, \nu,\lambda}.
\end{equation}

In this work, we use the restriction of the $Gl(nm)$-irreducible indexed by $\lambda$ to the subgroup $Gl(n)\times Gl(m)$ described  by Eqn.~(\ref{ourBranching})  to compute of $\kappa_{n,m,l}$.
 We introduce a new family of coefficients (also indexed by triples of partitions) that we call the \textit{atomic Kronecker coefficients}. They are defined 
 by a single vector partition function. As a result, they count integer points in polytopes, satisfy
 the saturation hypothesis  \cite{Kirillov:saturation, BOR-CC}, and are described by a  piecewise quasipolynomial.  We then show how to compute the actual Kronecker coefficients from these atomic coefficients.   We also show that, for partitions of lengths $2$, $2$ and $4$, the atomic Kronecker coefficients are an upper bound for the Kronecker coefficients.
 
The atomic Kronecker coefficients share many properties with the {\em reduced Kronecker coefficients} (a family of coefficients lying  between the Littlewood-Richardson coefficients and the Kronecker coefficients,  defined in Section \ref{section:reduced}):
 They
 contain enough information to compute from them  the value of any Kronecker       coefficient as an alternating sum.   Sometimes, the atomic Kronecker coefficients coincide with the Kronecker coefficients.  
However, the atomic Kronecker coefficients have a major advantage over the reduced Kronecker coefficients (that they share with the Littlewood--Richardson coefficients):  They satisfy the saturation hypothesis, whereas the reduced Kronecker coefficients do not \cite{PakPanova2020}.

In this paper we provide the theory and framework for computing $\kappa_{n,m,nm}$, although we focus mainly on the Kronecker function $\kappa_{2,2,4}$. In subsequent work (with Stefan Trandafir) we will report on an explicit implementation of the techniques developed in this paper   to  compute the Kronecker function $ \kappa_{2,3, 6}$.

The structure of the paper is the following. We begin in Section~\ref{sec-Polytopes} with a basic survey on polytopes and quasipolynomials. This is sufficient to understand the mechanics of our strategy. 
Our key idea is to use Eqn.~(\ref{ourBranching}) in conjunction with Cauchy's definition for Schur functions as a  quotient of alternants (equivalently, the Weyl character formula for the root system $A_n$) to make explicit the  relation between Kronecker coefficients and points in a polytope.

In Section~\ref{sec-F22}, we study  the smallest nontrivial example,  in which two of the partitions have length $\le 2$. We  provide concrete visualizations of the Kronecker functions~$\kappa_{2,2,2}$  and~$\kappa_{2,2,4}$. This can be made really explicit because the corresponding polyhedra are  of dimension 1 and 2 respectively. 
We give a new explicit closed form (Theorem~\ref{7termF22Alternant}) for the Kronecker coefficients in terms of coefficients of a  vector partition function $F_{2,2},$  as well as in terms of vector partition functions.  Our formula identifies 7 terms (out of a possible  total of 24) in the numerator of the Weyl character formula for the Weyl group $\mathbb{S}_4$, as the only  terms contributing to the Kronecker coefficient. 
 The number of chambers is large, even in the case 2-2-4, where it  was determined to be 74 in [BOR09b, BOR09a]. 
 We show that, in the case of Kronecker coefficients indexed by triples of partitions of length at most 2, 2, and 4, the atomic Kronecker coefficients  give an upper bound for the value of the Kronecker coefficients  (Theorem~\ref{thm:AtomicIsMax}). 
 We study the relative positions of the nonzero Littlewood--Richardson coefficients,  reduced Kronecker coefficients, and  atomic Kronecker coefficients inside the Kronecker cone (the polyhedral  cone generated by the nonzero Kronecker coefficients).
We study the dilated Kronecker coefficients,~$g_{k\lambda, k\mu,k\nu}$, defined for fixed $\lambda, \mu,$ and $\nu$, and $k \in \mathbb{N}$. We express these as a subseries of vector partition generating functions which implies that these are given by quasipolynomials in~$k$.  We show how to use Theorem~\ref{7termF22Alternant} to compute the dilated Kronecker coefficients $g_{k\mu, k\nu, k\lambda}$ in the 2-2-4 case. 

In Section~\ref{sec-Fnm} we consider the general situation.
Theorem~\ref{thm:AlwaysVecPart} presents an elementary but  nontrivial change of variables which converts the quotient of alternants  into a form recognizable as a  vector partition function, which we call $F_{n,m}$.  This facilitates our analysis since it returns us to the realm of Taylor series.  The function $F_{n,m}$ is the generating function of the atomic Kronecker coefficients.

The piecewise quasipolynomial nature of the Kronecker function has been the focus of much interest. The piecewise quasipolynomiality  follows from the work of Meinrenken and  Sjamaar \cite{qr0}. Both Christandl, Doran, and Walter~\cite{CDW} and Baldoni, Vergne, and Walter~\cite{BaldoniVergneWalter}
 describe and implement algorithms to compute the Kronecker coefficients. 
 Pak and Panova obtained  an interesting upper bound for the complexity of the  calculation of the Kronecker coefficients, see the proof of Lemma 5.4 in~\cite{Pak:Panova:complexityKC}.

\section{Polytopes and Quasipolynomials} 
\label{sec-Polytopes}

This section is a primer on polytope point enumeration and
quasipolynomiality. It can be skipped by those familiar with the topic. 
However, the examples we have chosen for this section are directly relevant in our study of the Kronecker coefficients.


A \emph{polyhedron} $\Pol$ is
  the set of solutions of a (finite) system  and  inequalities:
\[
\Pol = \{ \xx \in \RR^d : A\xx \le \bb \},
\]
for a fixed matrix $A$ and vector $\bb$, where the ``$\le$" sign is to
be understood componentwise. 
A polyhedron is said to be \emph{rational} if both $A$ and $\bb$ have
integer entries.
A \emph{polytope} is a bounded polyhedron.
Note that any dilation of a polytope contains only a finite
number of integer points. 

The {\em  dimension} of a polytope is the dimension of  the affine space spanned by its vertices.
A \emph{$k$-simplex} is a $k$-dimensional polytope which is the convex
hull of $k+1$ vertices. 
 
A function $\phi : \NN \to \QQ$ is a 
 \emph{
    (one-variable) quasipolynomial} if there exist polynomials
  $p_0, p_1, \ldots, p_{k-1}$ in $\QQ[t]$ and a natural number $m>0$,
  a \emph{period\/} of $\phi$, such that
\[
\phi(t) = p_i(t), \text{ for } t \equiv i \mod m.
\]
The polynomials $p_i$ are the \emph{constituents} of $\phi$. The degree of a quasipolynomial is the maximum of the degrees of its components.


\begin{example}\label{ZeroExample} 
Let  $\Pol$ be the one-dimensional polytope $[0, 1/2]$, and consider its integer dilations $ k \Pol = [0, k/2]$, as illustrated in Figure~\ref{fig:zeroexample}. 
 We want to count the number of integer points in the dilation of $\Pol$: 
\[
\phi_\Pol(k) := | \ZZ \cap k \Pol |
\] 
 Equivalently, we are interested in counting the number of nonnegative integer solutions to the inequality $0\le s_1\le k/2$. Then
\[
\phi_\Pol(k) = \left\lfloor \frac{k}{2} \right\rfloor+1 =
  \begin{cases}
    \frac{k+2}{2} &   \text{ if } k \equiv 0 \mod 2\\
  \frac{k+1}{2}     &  \text{ if } k  \equiv 1 \mod 2   \end{cases}
\]
is a linear quasipolynomial of period 2. 

\begin{figure}\center
\usetikzlibrary{arrows}
\begin{tikzpicture}
\draw[latex-] (-0.9,0) -- (2.4,0) ;
\draw[-latex] (-0.9,0) -- (2.4,0) ;
\foreach \x in  {-1/2, 0,1/2, 1, 3/2, 2}
\draw[shift={(\x,0)},color=black] (0pt,3pt) -- (0pt,-3pt);
\foreach \x in {0,1,2}
\draw[shift={(\x,0)},color=black] (0pt,0pt) -- (0pt,-3pt) node[below] 
{$\x$};
\draw[*-*] (-0.08,0) -- (0.58,0);
\draw[very thick] (0,0) -- (0.5,0);

\draw[*-*] (-0.08,0.2) -- (1.08,0.2);
\draw[very thick] (0,0.2) -- (1,0.2);

\draw[*-*] (-0.08,0.4) -- (1.58,0.4);
\draw[very thick] (0,0.4) -- (1.5,0.4);

\draw[*-*] (-0.08,0.6) -- (2.08,0.6);
\draw[very thick] (0,0.6) -- (2,0.6);
\end{tikzpicture}

\caption{The one dimensional polytope~$\mathcal{P}=[0, 1/2]$ and its first
  four integer dilations. The volume of the $k$-th dilation is $\left\lfloor \frac{k}{2} \right\rfloor+1$, a quasipolynomial in $k$.}
\label{fig:zeroexample}
\end{figure}
\end{example}

 A \emph{vector partition} of $\bb \in \NN^d$ is a way of decomposing $\bb$ as a  sum of nonzero vectors in $\NN^d$.  The order of the summands is irrelevant.
 We are interested in partitions whose \emph{parts} (nonzero summands) belong to a fixed finite sub-multiset $S$ of $ \NN^d$.  The \emph{vector partition function} $p_S : \NN^d \to \NN$ is the function that evaluated at $\bb$ gives the number of vector partitions of $\bb$ with parts in $S$.

Computing the value of the vector partition function $ p_S$ is  equivalent to finding the number of  nonnegative integer
 solution $\xx$ for the system of linear equation $A \xx= \bb$, where $A$ is
 the $d\times |S|$ matrix whose columns are the vectors in $S.$  
  It turns out that 
the matrices that appear in our work always contains a copy of the $n\times n$ identity matrix~$I_{n}$. 


   Let $A$ be a $d\times n$  matrix with column vectors ${\bf a}_1,  {\bf a}_2, \ldots,  {\bf a}_n$. Let $pos(A)$ be the polyhedral cone generated by the columns of $A$. 
Given $\sigma \subseteq \{1,2,\ldots,n\}$, let $A_\sigma$ be 
the submatrix of $A$  consisting of those columns ${\bf a}_i$ with $i \in \sigma$. Let $\ZZ A_\sigma$ be the integral lattice spanned by the columns of $A_\sigma$.  A subset $\sigma$ is a \emph{basis} if $\rank(A)=\rank(A_\sigma)$. 

The \emph{chamber complex} is the polyhedral subdivision of the cone $pos(A)$ which is defined as the common refinement of the  cones $pos(A_\sigma)$, where $\sigma$ runs over all bases. 

 A function
 $g : \mathbb{Z}^n \to \mathbb{Q}$ is a {\em (multivariate) quasipolynomial} if there exists an $n$--dimensional lattice $\Lambda \subseteq \mathbb{Z}^n$, a set $\{ \mathbf{\lambda}_\mathbf{i} \}$ of coset representatives of $\mathbb{Z}^n/ \Lambda$, 
  and polynomials $p_i \in \mathbb{Q}[\mathbf{t}]$ such that $g(\mathbf{t}) = p_i(\mathbf{t})$, for $\mathbf{t} \in \mathbf{\lambda}_\mathbf{i} + \Lambda$. 
  
Blakley~\cite{Blakley} and Sturmfels~\cite{Sturmfels} have shown that 
\label{Blakley} 
  there exists a finite
  decomposition of $pos(A)$ (a chamber complex)  into rational polyhedral cones (chambers) such that in each chamber $\sigma$ the
  vector partition function $p_A(\bb) $ is given by a single
  multivariable quasipolynomial of degree $n-\rank(A)$. 
  
Moreover, the quasipolynomial $p_\sigma$ corresponding to chamber $\sigma$ counts the number of integral points in a    $(n-\rank(A))$ dimensional polytope $K_\sigma$. The
 leading term  of  $p_\sigma$ is always a polynomial. It gives the  volume of $K_\sigma$.
 
\begin{example}\label{ThirdExample}\label{F22}
Let $p_S(n,m)$ count the number of vector partitions of $\bb=(n,m)$ with
parts in $S=\{ (1,0),(0,1),(1,1),(1,2)\}$. Equivalently,  this is the number of nonnegative integer solutions $\xx$ to the system $A \xx= \mathbf{b}$, where
\[
A=
\begin{bmatrix}
 1 & 0 &  1  & 1 \\
0 & 1  & 1   &2   \\
\end{bmatrix} \qquad \mathbf{b}=\begin{bmatrix}n\\m\end{bmatrix}
\]
To determine one such partition, it suffices to determine the number
of parts equal to $(1,1)$ and $(1,2)$ in it. The standard basis vectors
serve as slack variables here,  consequently, the multiplicities of $(1,1)$ and $(1,2)$ should fulfill the  inequalities: 
\begin{align}\label{polytope22}
  \begin{cases}
 x_3+x_4 \le n\\
 x_3+2x_4 \le m
    \end{cases}
\end{align}
Therefore, the vector partition function $p_S$ counts nonnegative integer points in the polytope defined by the inequalities~\eqref{polytope22}. 

\begin{figure}
\begin{minipage}{.32\textwidth}\center
\usetikzlibrary{arrows}
\begin{tikzpicture}[scale=.4]
{ 
\draw[step=1.0,draw=gray!60,thin] (-0.5,-0.5) grid (8.5,8.5);

\draw[very thick] (0,8.2) -- (0,0) -- (8.2,0);

\node at (8.9,0) {$x_3$}; 
\node at (0,8.9) {$x_4$}; 
\fill[opacity=.4] (6,0) -- (0,0) -- (0,3);
\node at (6,-0.5) {$m$}; 
\node at (-0.5,3) {$\frac{m}{2}$}; 
\draw(0,7) -- (7,0);
\draw(0,3) -- (6,0);
\node at (7,-0.5) {$n$}; 
\node at (-0.5,7) {$n$}; 

}\end{tikzpicture}
\end{minipage}
\begin{minipage}{.32\textwidth}\center
\usetikzlibrary{arrows}
\begin{tikzpicture}[scale=.4]
{ 
\draw[step=1.0,draw=gray!60,thin] (-0.5,-0.5) grid (8.5,8.5);

\draw[very thick] (0,8.2) -- (0,0) -- (8.2,0);

\node at (8.9,0) {$x_3$}; 
\node at (0,8.9) {$x_4$}; 
\fill[opacity=.4] (3,0) -- (0,0) -- (0,3);
\node at (8,-0.5) {$m$}; 
\node at (-0.5,4) {$\frac{m}{2}$}; 
\draw(0,4) -- (8,0);
\draw(0,3) -- (3,0);
\node at (3,-0.5) {$n$}; 
\node at (-0.5,3) {$n$}; 
}\end{tikzpicture}
\end{minipage}
\begin{minipage}{.32\textwidth}\center
\usetikzlibrary{arrows}
\begin{tikzpicture}[scale=.4]
{ 
\draw[step=1.0,draw=gray!60,thin] (-0.5,-0.5) grid (8.5,8.5);
\fill[opacity=.4] (6,0) -- (0,0) -- (0,4) -- (4,2);
\draw[very thick] (0,8.2) -- (0,0) -- (8.2,0);

\node at (8.9,0) {$x_3$}; 
\node at (0,8.9) {$x_4$}; 

\node at (8,-0.5) {$m$}; 
\node at (-0.5,4) {$\frac{m}{2}$}; 
\draw(0,4) -- (8,0);
\draw(0,6) -- (6,0);
\node at (6,-0.5) {$n$}; 
\node at (-0.5,6) {$n$}; 

}\end{tikzpicture}
\end{minipage}
\caption{Three possibilities for the polytope defined by  the inequalities in Eqn.~\eqref{polytope22}}
\label{fig:polytopes}
\end{figure}

\begin{enumerate}

\item[(I)]
If $m \le n$ the first equation is redundant. We are counting integer points in the 2-simplex  defined by $x_3 \ge 0$, $x_4 \ge 0$, and $x_3+2 x_4 \le m$.

\item[(II)] If $n \le \frac{m}{2}$, it is the second equation that is redundant. We are counting integer points in the standard  2-simplex defined by  $x_3 \ge 0$, $x_4 \ge 0$, and $x_3+ x_4 \le n$.

\item[(III)] Finally, if $\frac{m}{2} < n < m,$ both inequalities are relevant. We are counting the number of points in the polytope with vertices 
$(0,0),(n,0),(0,\frac{m}{2}),$ $ (2n-m,m-n)$. We need to multiply by $2$ to get integer vertices, so the resulting quasipolynomial has  period $ 2$ for $m$. 

\end{enumerate}

The resulting piecewise quasipolynomial is then:\\

\begin{tabular}{cll}
  Region& &$p_S(n,m)$\\ 
  \hline
  I& $m\leq n$ & $\frac{m^2}{4}+ m + \frac{7}{8} + \frac{(-1)^m}{8}$\\
  II & $2n\leq m$& $\frac{n^2}{2}+\frac{3n}{2}+1$\\
  III  &$ n\leq m \leq 2n$ & $nm - \frac{n^2}{2} -\frac{m^2}{4} +\frac{n+m}{2} + \frac{7}{8} + \frac{(-1)^m}{8}$ \\
 \end{tabular}\\

\begin{figure}
\begin{minipage}{.35\textwidth}\center
\usetikzlibrary{arrows}
\begin{tikzpicture}[scale=.4]
{ 
\draw[step=1.0,draw=gray!60,thin] (-0.5,-0.5) grid (8.5,8.5);
\fill[opacity=.4] (8.2,8.2) -- (0,0) -- (4.1,8.2);
\fill[opacity=.6] (8.2,8.2) -- (0,0) -- (8.2,0);
\fill[opacity=.2] (0,8.2) -- (0,0) -- (4.1,8.2);
\draw[very thick] (0,0) -- (8.2,8.2);
\draw[very thick] (0,0) -- (4.1,8.2);
\draw[very thick] (0,8.2) -- (0,0) -- (8.2,0);
\foreach \i in {0,...,8}
 { \foreach \j in {0,...,8}
   { \filldraw(\i,\j) circle (1pt);   
   }
 }
 \node[draw,color=white] at (1,5) {\bf II}; 
\node[draw, color=white] at (5,7) {\bf III}; 
\node[draw, color=white] at (5,2) {\bf I}; 
\node at (8.9,0) {$n$}; 
\node at (0,8.9) {$m$}; 
}\end{tikzpicture}
\end{minipage}\\
\caption{The chambers giving the value of $p_S(n,m)$, the number of vector partitions of $(n,m)$ with parts in $S=\{(1,0), (0,1), (1,1), (1,2) \}$.}
\label{fig:chambers}
\end{figure}

\smallskip
\noindent
where the three different regions are illustrated in Figure~\ref{fig:chambers}.

\end{example}

A function $p_A(\bb) $ that satisfies the conclusions of the  theorem of Blakley and  Sturmfels is known as a  {\em 
piecewise quasipolynomial function}. Vector partition functions are piecewise quasipolynomials. However,  a piecewise quasipolynomial
 need not count integral points in polytopes.

\section{A vector partition function for Kronecker coefficients}
\label{sec-F22}

Let $\lambda=(\lambda_1, \lambda_2, \ldots, \lambda_n)$ be  a partition, and let $\delta_n=(n-1,\cdots, 1, 0)$. 
The alternant $a_\lambda$  is defined as
$
a_\lambda(x_1, x_2, \ldots, x_n)=\det(x_i^{\lambda_j})_{i,j=1}^n.
$
Cauchy defined Schur polynomials in terms of the alternant as follows
\[
s_\lambda[X]=\frac{a_{\lambda+\delta}(x_1, x_2, \ldots, x_n)}{a_{\delta}(x_1, x_2, \ldots, x_n)}=\frac{\det(x_i^{\lambda_j+n-j})_{i,j=1}^n}{\prod_{1\le i<j \le n}(x_i-x_j)}.
\]
Let $\mu$, $\nu$, and $\lambda$ be three partitions of the same weight satisfying that $\ell(\mu) \le n$, $\ell(\nu) \le m$  and $\ell(\lambda) \le  nm$.

Given $X=\{x_1,\ldots,x_{n}\}$, $Y=\{y_1,\ldots,y_{m}\}$, define 
$
s_{\lambda}[XY]$ as $s_{\lambda}(x_1y_1, x_1y_2\cdots,x_{n}y_{m}).
$
This is a symmetric function in the $x$'s and the $y$'s separately. Since Schur functions form an integral basis for the algebra of symmetric functions, we can write  \begin{align}\label{comultiplication}
s_{\lambda}[XY]= \sum_{\mu, \nu} g_{\mu,\nu,\lambda} \, s_\mu[X] s_\nu[Y].
\end{align}
The   coefficients $g_{\mu,\nu,\lambda}$ 
are the \emph{Kronecker coefficients.} Formula ~\eqref{comultiplication} ishows that if  $\ell(\mu) \le n$, $\ell(\nu) \le m$, then 
$g_{\mu,\nu,\lambda}$  is nonzero only if $\ell(\lambda) \le  nm$.

Combining Cauchy's definition of a Schur polynomial  with the comultiplication formula \eqref{comultiplication}, we obtain the following  identity 
\begin{equation}\label{main}
\frac{{a_{\delta_n}[X]}{a_{\delta_m}[Y]}}{a_{\delta_{nm}}[XY]}   \,\,   a_{\lambda+\delta_{nm}}[XY]= \sum_{\mu, \nu} g_{\mu,\nu,\lambda} \,  a_{\mu+\delta_n}[X] a_{\nu+\delta_m}[Y].
\end{equation}

In the preceding formula, the right hand side is finite: The alternant $a_{\delta_{nm}}[XY]$ is a polynomial  consisting of $(nm)!$ nonzero monomials. The factor $\frac{{a_{\delta_n}[X]}{a_{\delta_m}[Y]}}{a_{\delta_{nm}}[XY]} $ on the left is a rational function. The numerator divides the denominator, and the left hand side simplifies to an expression of the form $ a_{\lambda+\delta_{nm}}[XY]$ divided by a polynomial. We will take a closer look at this in the next section.

\subsection{Kronecker coefficients for triples  of lengths at most $ 2,2,4$}

Our approach is best illustrated   in the simplest nontrivial case of three partitions $\mu, \nu, \lambda$ with lengths at most $2, 2, $ and $4$, respectively.
We dedicate the rest of Section \ref{sec-F22} to this particular case.

Let $\lambda$ be a fixed partition of length $\le 4$. Since
Schur functions are homogeneous polynomials, without loss of information, we can set $X=\{1, x\}$, $Y=\{1,y\}$, $XY=\{1,x,y,xy\}$ in Eq.~\eqref{main}. We obtain
\begin{equation}\label{KroneckerCauchy}
\frac{{a_{\delta_2}[X]}{a_{\delta_2}[Y]}}{a_{\delta_{4}}[XY]} \,\,   a_{\lambda+\delta_{4}}[XY]=
	 \sum_{\mu, \nu}  g_{\mu,\nu,\lambda} \,  x^{\mu_2}y^{\nu_2} + \hot
\end{equation}
where \lq\lq\hot "  \ stands for \lq\lq lexicographically greater terms."  The form arises since $\nu$, $\mu$ and $\lambda$ are partitions of the same integer $N$, and furthermore $\mu_2,\nu_2\le N/2$.

The quotient on the left simplifies drastically, into a simple rational function:
\[\frac{{a_{\delta_2}[X]}{a_{\delta_2}[Y]}}{a_{\delta_{4}}[XY]} = \frac{1}{{ x^2 y}\,\,(1-y/x)(1-xy)(1-x)(1-y)}.\]
We set 
\begin{equation}
\label{atomic1stdef}
\bar{F}_{2,2}(x,y):=\frac{1}{(1-y/x)(1-xy)(1-x)(1-y)}, 
\end{equation} and note that we can determine an iterated Laurent series expansion of this rational function by first developing a series expansion in $y$, and then in $x$:
\begin{equation}\label{F22series}\small
\bar{F}_{2,2}(x,y)= 
1+x+x^2+\dots + y(x^{-1}+2+\dots) + y^2(x^{-2}+2x^{-1}+\dots) + O(y^3),
\end{equation}
valid in a nonempty polydisc defined by $0<|xy|<|y|<|x|<1$.
This expansion is fundamental to our next step. 

This reduces the computation of Kronecker coefficients to straightforward series manipulations since $a_{\lambda+\delta_4}[XY]$ is just a polynomial. Restating Eqn.~\eqref{KroneckerCauchy}, we can use the series $\bar{F}_{2,2}(x,y)$ to compute the Kronecker coefficients:
\begin{align}\label{Kronecker1stdef}
\sum_{\mu, \nu}   g_{\mu,\nu,\lambda} x^{\mu_2} y^{\nu_2}
+ \hot
&=
\dfrac{a_{\lambda+\delta_4}[XY]}{(1-y/x)(1-xy)(1-x)(1-y)x^2y}\\
&=a_{\lambda+\delta_4}[XY]\dfrac{\bar{F}_{2,2}(x,y)}{x^2y}.\nonumber
\end{align}

 We can approximate the Kronecker coefficients via the following modification: We replace $a_{\lambda+\delta_{4}}[XY]$ with a single term. The resulting rational function no longer has a finite series expansion, but we can manipulate its iterated Laurent series expansion. Specifically, we replace the alternant $a_{\delta_{4}+\lambda}[XY]$ by its lexicographically least  monomial, which we denote by $S( a_{\lambda+\delta_{4}}[XY])$. This is explicitly computable by analysis of the determinant computation, as it is the product of the terms along the main diagonal:
\[S( a_{\lambda+\delta_{4}}[XY])=
(xy)^{\lambda_4}\cdot y^{\lambda_3+1} \cdot x^{\lambda_2+2} ={ x^2y}\cdot x^{\lambda_4+\lambda_2} y^{\lambda_4+\lambda_3}.\]

We name the coefficients in the resulting Laurent series expansion as follows:

\[
\frac{{a_{\delta_2}[X]}{a_{\delta_2}[Y]}}{a_{\delta_{4}}[XY]} \,\,  S( a_{\lambda+\delta_{4}}[XY])
	 =\sum_{\stackrel{b>\lambda_4+\lambda_3+1}{b\geq -a+2+\lambda_3+\lambda_4}} \tilde g_{(|\lambda|-a,a),(|\lambda|-b, b),\lambda} \,  x^{a}y^{b}. 
\]
The coefficients indexed by actual partitions turn out to have the feature that they are easy to compute, and in some circumstances are reasonable approximations their actual Kronecker coefficient analogues.  However, before we study them further, we use a change of basis to make the problem more combinatorial. 

\smallskip
\begin{lemma}
After the change of basis  $x=s_1$ and $y=s_0s_1,$  $\bar{F}_{2,2}(x,y)$ becomes a vector partition function:
 \begin{align}\label{defF22}
  \nonumber
 F_{2,2}(s_0, s_1)&=\sum \tilde g_{\mu,\nu,\lambda} s_0^{\nu_2-\lambda_3-\lambda_4} s_1^{\mu_2+\nu_2-\lambda_2-\lambda_3-2\lambda_4}\\
 &=\frac{1}{(1-s_0)(1-s_1)(1-s_0s_1)(1-s_0s_1^2)}.
 \end{align}
 \end{lemma}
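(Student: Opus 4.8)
The plan is to perform the substitution $x = s_1$, $y = s_0 s_1$ directly in the closed-form rational expression for $\bar F_{2,2}(x,y)$ given in Eq.~\eqref{atomic1stdef}, and then check that each of the four factors in the denominator turns into one of the four factors $(1-s_0), (1-s_1), (1-s_0s_1), (1-s_0s_1^2)$. Concretely, the factor $(1-x)$ becomes $(1-s_1)$; the factor $(1-y)$ becomes $(1-s_0s_1)$; the factor $(1-xy)$ becomes $(1-s_0s_1^2)$; and, crucially, the factor $(1-y/x)$ becomes $(1 - s_0s_1/s_1) = (1-s_0)$. This last cancellation is exactly why the change of variables is chosen this way: it clears the negative exponent in the Laurent monomial $x^{-1}$ appearing implicitly in $y/x$, and converts the Laurent series into an honest power series. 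So the identity of rational functions in Eq.~\eqref{defF22} is immediate once the substitution is carried out.

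Next I would verify that the resulting rational function is genuinely a vector partition function in the sense of Section~\ref{VectorPartition}, i.e.\ that it is the generating series $\prod_{\avec \in S} (1-\mathbf{t}^{\avec})^{-1}$ for $S = \{(1,0),(0,1),(1,1),(1,2)\} \subseteq \NN^2$ with $\mathbf{t} = (s_0,s_1)$. Reading off the exponent vectors of $s_0,s_1$ in each denominator factor: $(1-s_0) \leftrightarrow (1,0)$, $(1-s_1) \leftrightarrow (0,1)$, $(1-s_0s_1)\leftrightarrow(1,1)$, $(1-s_0s_1^2)\leftrightarrow(1,2)$. These are precisely the columns of the matrix $A$ in Example~\ref{ThirdExample}, so $F_{2,2}(s_0,s_1) = p_S(s_0\text{-degree}, s_1\text{-degree})$ with $S$ as above, and in particular all its Taylor coefficients are nonnegative integers counting lattice points in the polytope of Eqn.~\eqref{polytope22}. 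It is worth remarking that the substitution is a unimodular (determinant $\pm1$) monomial change of variables, hence invertible over the Laurent polynomial ring, so no information is lost and the correspondence between coefficients of $\bar F_{2,2}$ and of $F_{2,2}$ is a bijection on exponents.

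For the first displayed line of Eq.~\eqref{defF22}, I would track the exponents through the substitution. Starting from $\bar F_{2,2}(x,y) = \sum_{\mu,\nu} \tilde g_{\mu,\nu,\lambda}\, x^{\mu_2-\lambda_2-\lambda_4} y^{\nu_2-\lambda_3-\lambda_4}$ and putting $x = s_1$, $y = s_0s_1$, a term $x^a y^b$ becomes $s_1^a (s_0 s_1)^b = s_0^{\,b} s_1^{\,a+b}$. With $a = \mu_2-\lambda_2-\lambda_4$ and $b = \nu_2-\lambda_3-\lambda_4$ this gives $s_0$-exponent $\nu_2 - \lambda_3 - \lambda_4$ and $s_1$-exponent $(\mu_2-\lambda_2-\lambda_4)+(\nu_2-\lambda_3-\lambda_4) = \mu_2+\nu_2-\lambda_2-\lambda_3-2\lambda_4$, matching the stated formula exactly. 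That reindexing is routine bookkeeping.

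The only genuine subtlety — and the step I'd flag as the ``main obstacle,'' though it is more a matter of care than of difficulty — is justifying that the substitution respects the ring in which these series live. The series $\bar F_{2,2}$ is a Laurent series in the region $0 < |xy| < |y| < |x| < 1$, i.e.\ it is expanded treating $y/x$, $xy$, $x$, $y$ all as small. One must check that under $x = s_1$, $y = s_0 s_1$ these four quantities become $s_0, s_0s_1^2, s_1, s_0s_1$, all monomials in $\NN^2$ with no negative exponents, so the image lies in $\QQ[[s_0,s_1]]$ and the geometric-series expansions are all legitimate (no reexpansion or analytic continuation is hidden in the substitution). Equivalently, one checks that the chosen term order on $(x,y)$ pulls back to a term order on $(s_0,s_1)$ under which all four denominator factors have leading term $1$. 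Once this is said, the lemma follows from the elementary algebraic identity $\frac{1}{(1-s_0/1)(\dots)}$ obtained by plugging in, together with the observation that this is the defining product of the vector partition function $p_S$ for $S = \{(1,0),(0,1),(1,1),(1,2)\}$.
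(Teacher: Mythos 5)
Your proposal is correct and follows essentially the same route the paper implicitly takes: perform the monomial substitution directly, observe that each denominator factor $(1-y/x),(1-xy),(1-x),(1-y)$ becomes $(1-s_0),(1-s_0s_1^2),(1-s_1),(1-s_0s_1)$, track the exponents to get the stated sum, and note that the convergence region $0<|xy|<|y|<|x|<1$ translates to $0<|s_0|,|s_1|<1$ so the Laurent series is now an honest Taylor series matching the vector partition function of Example~\ref{ThirdExample}. The paper leaves these verifications implicit; you have simply spelled them out, including the key observation that $(1-y/x)\mapsto(1-s_0)$ is what converts the Laurent expansion into a power series.
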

This change of variables is desirable as it returns us to the realm of Taylor series.  Note that the assumptions $0<|xy|< |y|<|x|<1$, that define the domain of convergence of this series translate to $0 < |s_0|, |s_1|  < 1 $. This is  the vector partition function of Example~\ref{F22}.

The following observation is useful in the present discussion. We respect our coefficient ordering by noting that when we say the coefficient of $x^i y^j$ in $\bar{F}_{2,2}(x,y)$, denoted $[x^iy^j]\bar{F}_{2,2}(x,y)$ we mean $[y^j][x^i]\bar{F}_{2,2}(x,y)$ since our series expansion prioritizes $x$. The order is interchangeable in extractions of $F_{2,2}(s_0,s_1)$, since it is a finite product of Taylor (geometric) series. In our analysis of the coefficients, we choose between the original series $\bar{F}_{2,2}(x,y)$ in the variables $x,y,$ and the vector partition function $F_{2,2}(s_0,s_1)$ depending upon which is more convenient.  More precisely, we use Eqn.~\eqref{defF22} to apply techniques from  the theory of vector partition functions and polyhedral geometry.  However Eqn.~\eqref{Kronecker1stdef} is the more natural choice to analyse the alternant $a_{\lambda+\delta_4}[XY].$ 
 \begin{proposition}\label{barF22}  The coefficient of $x^i y^j$ in $\bar{F}_{2,2}(x,y)$, which is also the coefficient of $s_0^j s_1^{i+j}$ in the Taylor expansion of $F_{2,2}(s_0,s_1)$,  is nonzero if and only if $j\geq 0$ and $i+j\geq 0.$ 
 \end{proposition}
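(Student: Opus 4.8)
The plan is to work with the vector partition function form $F_{2,2}(s_0,s_1)=\frac{1}{(1-s_0)(1-s_1)(1-s_0s_1)(1-s_0s_1^2)}$, since its coefficients are manifestly nonnegative and the claim about $\bar F_{2,2}$ then follows by translating back through the change of variables $x=s_1$, $y=s_0s_1$. The key observation, already recorded in the excerpt, is that $[x^iy^j]\bar F_{2,2}(x,y)=[s_0^{j}s_1^{i+j}]F_{2,2}(s_0,s_1)$; so the statement ``$[x^iy^j]\bar F_{2,2}$ is nonzero iff $j\ge 0$ and $i+j\ge 0$'' is equivalent to ``$[s_0^{a}s_1^{b}]F_{2,2}$ is nonzero iff $a\ge 0$ and $b\ge 0$'', where $a=j$ and $b=i+j$.

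First I would dispatch the ``only if'' direction: $F_{2,2}$ is a product of geometric series in $s_0$ and $s_1$ with nonnegative exponents, hence a genuine power series in $\NN[[s_0,s_1]]$; therefore any monomial $s_0^as_1^b$ appearing with nonzero coefficient must have $a,b\ge 0$. (Correspondingly, in $\bar F_{2,2}$, after clearing $x^{-?}$ one sees the support lies in the cone spanned by the exponent vectors of $y/x$, $xy$, $x$, $y$, i.e. $(-1,1),(1,1),(1,0),(0,1)$, whose nonnegative span is exactly $\{j\ge 0,\ i+j\ge 0\}$ — this is Example~\ref{F22} with $S=\{(1,0),(0,1),(1,1),(1,2)\}$ after the substitution.)

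The substantive direction is ``if'': for every $(a,b)$ with $a\ge 0$ and $b\ge 0$, the coefficient $p_S(a,b)=[s_0^as_1^b]F_{2,2}$ is strictly positive. This is where Example~\ref{ThirdExample} and Figure~\ref{fig:chambers} do the work: $p_S(a,b)$ counts lattice points in the polytope $\{x_3,x_4\ge 0:\ x_3+x_4\le a,\ x_3+2x_4\le b\}$ (with $(a,b)$ playing the roles of $(n,m)$), which always contains the origin $x_3=x_4=0$. Hence $p_S(a,b)\ge 1$ for all $a,b\ge 0$; equivalently, one reads off from the chamber table that each of the three constituents evaluates to at least $1$ on its chamber. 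Translating: $[x^iy^j]\bar F_{2,2}=p_S(i+j,\,i+2j)$ — here $n=i+j$ and $m=i+2j$, both $\ge 0$ exactly when $j\ge 0$ and $i+j\ge 0$ — so the coefficient is a positive integer precisely on that region, and zero outside it since it lies outside the support cone. I do not expect a genuine obstacle here; the only thing to be careful about is getting the index bookkeeping right in the two changes of variables (from $(x,y)$ to $(s_0,s_1)$, and from $(s_0,s_1)$-exponents to the partition-function arguments $(n,m)$), and confirming that $(a,b)=(0,0)$, i.e. $(i,j)$ with $i=j=0$, is indeed hit (it is: the empty vector partition).

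As a cross-check one can note the answer is forced by general principles: $F_{2,2}$ is a vector partition function whose generating set $S$ positively spans a full-dimensional pointed cone in $\NN^2$ (indeed $S\supseteq\{(1,0),(0,1)\}$), so its support is exactly $\NN^2$, which is the content of the proposition after coordinates are unwound.
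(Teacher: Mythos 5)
The paper records Proposition~\ref{barF22} without a displayed proof; it is treated as an immediate consequence of the change of variables $x=s_1$, $y=s_0s_1$ together with Example~\ref{ThirdExample}/Section~\ref{F22}. Your approach is exactly the intended one: under that substitution $\bar F_{2,2}(x,y)$ becomes the vector partition function $F_{2,2}(s_0,s_1)$ with generator set $S=\{(1,0),(0,1),(1,1),(1,2)\}$, whose support is exactly $\NN^2$ because $S$ contains both standard basis vectors, and $p_S(a,b)\ge 1$ whenever $a,b\ge 0$ (the empty solution $x_3=x_4=0$ always works). So both directions are handled correctly in substance.

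One concrete slip to fix. In the final ``Translating'' sentence you write $[x^iy^j]\bar F_{2,2}=p_S(i+j,\,i+2j)$ and assert that $n=i+j$, $m=i+2j$ are both nonnegative exactly when $j\ge 0$ and $i+j\ge 0$. Both claims are wrong. The correct identification is the one you already made two sentences earlier (``where $a=j$ and $b=i+j$''): since $x^iy^j=s_1^i(s_0s_1)^j=s_0^js_1^{i+j}$, one has
\[
[x^iy^j]\bar F_{2,2}=[s_0^{j}s_1^{i+j}]F_{2,2}=p_S(j,\,i+j),
\]
and it is for \emph{this} pair that ``both arguments nonnegative'' is equivalent to ``$j\ge 0$ and $i+j\ge 0$.'' The version you wrote is not equivalent: for instance $(i,j)=(5,-1)$ gives $i+j=4\ge 0$ and $i+2j=3\ge 0$ while $j<0$. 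Since the incorrect line contradicts the correct identification you used throughout and is not needed for the argument, deleting or correcting it repairs the proof; the core reasoning is sound and matches what the paper intends.
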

  It follows immediately from this or from Eqn.~\eqref{atomic1stdef} and Section~\ref{F22}, that

 \begin{proposition}\label{ineqatomiccone22} 
 The atomic Kronecker coefficient  $\tilde g_{\mu, \nu, \lambda}$ is nonzero if and  only if 
\begin{align}\label{Bravyi}
\begin{cases}
&  \lambda_2+\lambda_3+2\lambda_4 \le \mu_2 + \nu_2 \\
& \lambda_3+\lambda_4 \le \nu_2.
\end{cases}
\end{align}
Moreover, the value of $\tilde{g}_{\mu, \nu, \lambda}$ is given by a quadratic quasipolynomial:
\[\tilde g_{\mu, \nu, \lambda}= p_S(\nu_2-(\lambda_3+\lambda_4),  \mu_2+\nu_2-
(\lambda_2+\lambda_4 )-(\lambda_3+\lambda_4)),\]  
where $p_S$ is the vector partition function of Example~\ref{F22}.

\end{proposition}
These two inequalities have been previously derived  by Bravyi in the context of quantum physics, \cite{Bravyi}. We will refer to them as the {\em first and second inequalities of Bravyi.}

\begin{corollary} The value of the atomic Kronecker coefficient  depends only on the values of the two linear forms $\nu_2-(\lambda_3+\lambda_4)$ and $  \mu_2+\nu_2-
(\lambda_2+\lambda_4 )-(\lambda_3+\lambda_4)$. 
Furthermore, when either one is equal to zero (and the other nonnegative), the corresponding atomic Kronecker coefficient is equal to 1.

\end{corollary}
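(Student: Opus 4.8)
The plan is to deduce both assertions directly from Proposition~\ref{ineqatomiccone22}, which already expresses $\tilde g_{\mu,\nu,\lambda}$ as a single value of the vector partition function $p_S$ attached to $S=\{(1,0),(0,1),(1,1),(1,2)\}$ (Example~\ref{ThirdExample}). For the dependence statement there is nothing to do beyond reading off that proposition: setting $a:=\nu_2-(\lambda_3+\lambda_4)$ and $b:=\mu_2+\nu_2-(\lambda_2+\lambda_4)-(\lambda_3+\lambda_4)$, one has $\tilde g_{\mu,\nu,\lambda}=p_S(a,b)$ whenever the Bravyi inequalities~\eqref{Bravyi} hold, and $\tilde g_{\mu,\nu,\lambda}=0$ otherwise; since $a\ge 0$ and $b\ge 0$ are precisely the conditions~\eqref{Bravyi}, the value of $\tilde g_{\mu,\nu,\lambda}$ depends on $(\mu,\nu,\lambda)$ only through the pair $(a,b)$.

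For the second assertion I would work in the nonzero regime $a,b\ge 0$ and show $p_S(a,b)=1$ whenever $a=0$ or $b=0$. The cleanest route is combinatorial, straight from the definition of $p_S$ as the number of vector partitions of $(a,b)$ with parts in $S$: if $a=0$ then no part with a positive first coordinate can be used, so the only admissible part is $(0,1)$ and the unique vector partition of $(0,b)$ consists of $b$ copies of it; symmetrically, if $b=0$ the only admissible part is $(1,0)$ and the unique vector partition of $(a,0)$ consists of $a$ copies of it. Either way there is exactly one vector partition, so $p_S(a,b)=1$. Alternatively one can simply substitute into the chamber formulas collected in Figure~\ref{fig:chambers}: the locus $a=0$ lies in region II (where $2n\le m$) and $b=0$ lies in region I (where $m\le n$), and both constituents evaluate to $1$ there.

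I do not expect a real obstacle. The one place meriting a sentence of care is the boundary behaviour: when a linear form vanishes the corresponding lattice point sits on a facet of the cone associated to $F_{2,2}$, so one must make sure the applicable constituent of the piecewise quasipolynomial is region I or region II rather than region III. The closed boundary descriptions $m\le n$, $2n\le m$, $n\le m\le 2n$ of Figure~\ref{fig:chambers} settle this immediately, and the combinatorial argument above sidesteps the issue entirely.
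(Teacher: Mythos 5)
Your proposal is correct and is essentially the proof the paper has in mind: the corollary follows directly from Proposition~\ref{ineqatomiccone22}, with the boundary values $p_S(0,m)=1=p_S(n,0)$ supplying the second assertion (these are recorded again in Section~\ref{SheilaExamples}). Your combinatorial justification of those boundary values and the chamber cross-check are both sound and just make explicit what the paper leaves implicit.
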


\subsection{From $\bar F_{2,2}$ to an exact expression for Kronecker coefficients}
The rational series $F_{2,2}$ is not directly the generating series for the Kronecker coefficients because we truncated some polynomials in its construction. The main result of this section, Theorem~\ref{thm:MAIN1}, is an exact formula for the Kronecker coefficients in the $n=m=2$ case. 

 Since $\ell(\lambda)\le 4,$ the number of terms  in the  expansion of the alternant $a_{\lambda+\delta_4}[XY]$ is $4!=24$. However, it turns out that only \textit{seven} terms contribute to the Kronecker coefficient.
The following theorem explicitly identifies which terms of the alternant contribute to the Kronecker coefficient.
The polynomial in Theorem~\ref{7termF22Alternant} is minimal:  Example~\ref{Ex7terms} exhibits a combination in which all seven terms contribute nontrivially to the Kronecker coefficient, with NO  cancellation between any pairs of terms.

 \begin{theorem}\label{7termF22Alternant}\label{thm:MAIN1}
Assume $\ell(\lambda)\leq 4,$ and $\ell(\mu), \ell(\nu)\leq 2.$ Also assume $\mu_2\geq \nu_2.$ Then the  Kronecker coefficient $g_{\mu, \nu, \lambda}$ is equal to each of the following:
\begin{enumerate}
\item the coefficient of $x^{\mu_2} y^{\nu_2}$ in 
$ P_\lambda(x,y)\bar{F}_{2,2}(x,y)$, 
 where $P_\lambda(x,y)$ is the polynomial consisting of the following seven terms:
 \begin{multline}\label{explicitF22extraction}
 y^{\lambda_3+\lambda_4}  (x^{\lambda_2+\lambda_4} 
 -x^{\lambda_2+\lambda_3+1}-x^{\lambda_1+\lambda_4+1}
 +x^{\lambda_1+\lambda_3+2  })\\
 +y^{\lambda_2+\lambda_4+1}(-x^{\lambda_3+\lambda_4-1}+x^{\lambda_2+\lambda_3+1}
 +x^{\lambda_1+\lambda_4+1})
 \end{multline}
 A monomial $y^b x^a$ in $P_\lambda$ makes a nonzero contribution to $g_{\mu,\nu,\lambda}$ if and only if 
 $ b\leq \nu_2 \text{ and } b+a\leq \mu_2+\nu_2$. 
 \item the following 7-term linear combination of vector partition  functions $p_S(n,m)$:
{\tiny{ 
 \begin{multline}\label{Exact}g_{\mu,\nu,\lambda}=
 p_S(\nu_2-(\lambda_3+\lambda_4), \nu_2-(\lambda_3+\lambda_4)+ \mu_2-(\lambda_2+\lambda_4))\\
 -p_S(\nu_2-(\lambda_3+\lambda_4), \nu_2-(\lambda_3+\lambda_4)+ \mu_2-(\lambda_2+\lambda_3+1))\\
 -p_S(\nu_2-(\lambda_3+\lambda_4), \nu_2-(\lambda_3+\lambda_4)+ \mu_2-(\lambda_1+\lambda_4+1))\\
+ p_S(\nu_2-(\lambda_3+\lambda_4), \nu_2-(\lambda_3+\lambda_4)+ \mu_2-(\lambda_1+\lambda_3+2))\\
-p_S(\nu_2-(\lambda_2+\lambda_4+1), \nu_2-(\lambda_2+\lambda_4+1)+\mu_2-(\lambda_3+\lambda_4-1))\\
+p_S(\nu_2-(\lambda_2+\lambda_4+1), \nu_2-(\lambda_2+\lambda_4+1)+\mu_2-(\lambda_2+\lambda_3+1))\\
+p_S(\nu_2-(\lambda_2+\lambda_4+1), \nu_2-(\lambda_2+\lambda_4+1)+\mu_2-(\lambda_1+\lambda_4+1))
 \end{multline}}}
 \end{enumerate}
   \end{theorem}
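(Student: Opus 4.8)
The plan is to start from the identity in Eqn.~\eqref{Kronecker1stdef}, which expresses $\sum_{\mu,\nu} g_{\mu,\nu,\lambda} x^{\mu_2}y^{\nu_2} + \hot$ as $a_{\lambda+\delta_4}[XY]\,\bar{F}_{2,2}(x,y)/(x^2y)$, and to determine exactly which of the $24$ monomials of the alternant $a_{\lambda+\delta_4}[XY]$ can contribute to the coefficient of $x^{\mu_2}y^{\nu_2}$. First I would write $a_{\lambda+\delta_4}[XY] = \sum_{\sigma \in S_4} \operatorname{sgn}(\sigma)\, (xy)^{a_{\sigma(1)}} y^{a_{\sigma(2)}} x^{a_{\sigma(3)}}$ where $(a_1,a_2,a_3,a_4) = \lambda + \delta_4 = (\lambda_1+3, \lambda_2+2, \lambda_3+1, \lambda_4)$, reading off the exponents of $x$ and $y$ from the assignment of $XY = \{1,x,y,xy\}$ to the four rows of the determinant. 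Dividing by $x^2 y = S(a_{\lambda+\delta_4}[XY])\cdot(\text{the part depending on }\lambda)^{-1}$ as computed in the text, each monomial $y^b x^a$ of $P_\lambda$ will need to be multiplied against $\bar{F}_{2,2}$, and by Proposition~\ref{barF22} the coefficient of $x^{\mu_2-a} y^{\nu_2-b}$ in $\bar F_{2,2}$ is nonzero precisely when $\nu_2 - b \geq 0$ and $(\mu_2-a)+(\nu_2-b) \geq 0$, i.e. $b \leq \nu_2$ and $a+b \leq \mu_2+\nu_2$. So the real content is a \emph{support computation}: under the hypotheses $\ell(\lambda)\le 4$, $\ell(\mu),\ell(\nu)\le 2$, and $\mu_2 \ge \nu_2$, exactly the seven listed exponent pairs $(a,b)$ survive these two inequalities, and the other $24-7$ do not.

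The key steps, in order: (1) enumerate the $24$ monomials explicitly as $(a,b)$ pairs with signs, grouping by which element of $\{1,x,y,xy\}$ sits in the $(xy)$-contributing and $y$-contributing rows; (2) observe that the $y$-exponent $b$ takes only a few values — it is $\lambda_i + \lambda_j + (\text{shift})$ for the pair $\{i,j\}$ of rows not assigned to $x$ or $xy$, and since $\lambda$ is a partition the minimal such sums involve $\lambda_3,\lambda_4$; (3) show that any monomial whose $y$-exponent exceeds $\nu_2$ is killed, and that the constraint $b \le \nu_2$ combined with $b+a \le \mu_2+\nu_2$ (using $\mu_2 \ge \nu_2$) eliminates all but the two "levels" $b = \lambda_3+\lambda_4$ and $b = \lambda_2+\lambda_4+1$ appearing in \eqref{explicitF22extraction}; (4) within those two levels, check that the surviving $x$-exponents are exactly the four and three listed, respectively, again using that $\lambda$ is weakly decreasing to discard the larger $x$-exponents that would violate $a+b \le \mu_2+\nu_2$ once one accounts for the coefficient being zero there. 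This establishes part (1). For part (2), I would simply substitute: $[x^{\mu_2}y^{\nu_2}]\, y^b x^a \bar F_{2,2}(x,y) = [x^{\mu_2-a}y^{\nu_2-b}]\bar F_{2,2} = p_S(\nu_2-b,\ (\mu_2-a)+(\nu_2-b))$ by Proposition~\ref{barF22} and the identification $[x^i y^j]\bar F_{2,2} = [s_0^j s_1^{i+j}]F_{2,2} = p_S(j, i+j)$, then expand the seven terms of $P_\lambda$ with their signs and collect — this is the mechanical bookkeeping that produces \eqref{Exact}.

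The main obstacle I expect is step (3)–(4): showing that the "obvious" bound $b \le \nu_2$ is enough to discard monomials, because a monomial with $b > \nu_2$ contributes $0$ trivially, but one must also rule out monomials with $b \le \nu_2$ yet $a$ large — and here one cannot argue purely from support, since for a \emph{specific} $\lambda$ some of these might satisfy both inequalities. The resolution is that the \emph{claim is about which monomials of $P_\lambda$ can ever contribute}, i.e. the assertion "$y^b x^a$ contributes nonzero to $g_{\mu,\nu,\lambda}$ iff $b \le \nu_2$ and $a+b \le \mu_2+\nu_2$" is exactly Proposition~\ref{barF22} applied termwise; the work is instead in proving that the \emph{other $17$ monomials of the alternant are not in $P_\lambda$ at all}, i.e. that they cancel among themselves or reduce, via the divisibility $a_{\delta_4}[XY] \mid a_{\lambda+\delta_4}[XY]$ and the geometric-series structure of $\bar F_{2,2}$, to the seven-term combination. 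Concretely, one expands $a_{\lambda+\delta_4}[XY]/a_{\delta_4}[XY] = s_\lambda[XY]$ is \emph{not} the right move; rather, I would keep $\bar F_{2,2} = 1/((1-y/x)(1-xy)(1-x)(1-y))$ and argue that monomials of the alternant with $y$-exponent $\ge \lambda_2+\lambda_4+2$ or with $y$-exponent exactly at the two surviving levels but $x$-exponent too large cannot pair with any monomial of $\bar F_{2,2}$ (which has only nonnegative powers of $y$ and of $s_1 = $ "$x$-degree plus $y$-degree") to reach a target with $\nu_2 \ge $ those levels minus... — in short, the obstruction is carefully tracking the two-dimensional support cone of $\bar F_{2,2}$ against the $24$ lattice points $\{(a,b)\}$, and verifying the count is exactly $7$. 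I would handle this by a clean case split on the four values the row-pair $\{i,j\}$ (complement of the $x$- and $xy$-rows) can take together with the $3!\cdot2 = 12$... actually $4!/... $ arrangements, reducing to a short finite check, and then cite Example~\ref{Ex7terms} for minimality (no further cancellation) as the paper already promises.
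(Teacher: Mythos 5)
Your overall setup is right: start from Eqn.~\eqref{Kronecker1stdef}, expand the alternant as $24$ signed monomials, apply Proposition~\ref{barF22} to determine which lattice points $(a,b)$ can contribute, and then translate to the $p_S$ expression for part~(2). That matches the paper's approach. But there is a genuine gap in your plan for step~(3)--(4).

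The constraints $b\le\nu_2$ and $a+b\le\mu_2+\nu_2$, even after using $\nu_2\le n/2$ and $\mu_2+\nu_2\le n$ (which you should invoke but do not state explicitly), do \emph{not} eliminate all but the two levels $b=\lambda_3+\lambda_4$ and $b=\lambda_2+\lambda_4+1$. When $\lambda_2+\lambda_3<\lambda_1+\lambda_4$, the two lattice points $(\lambda_3+\lambda_4-1,\ \lambda_2+\lambda_3+2)$ and $(\lambda_2+\lambda_4,\ \lambda_2+\lambda_3+2)$ at the third level $b=\lambda_2+\lambda_3+2$ also satisfy $b\le n/2$ and $a+b\le n$, so the support argument alone leaves \emph{nine} monomials, not seven. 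The paper's proof therefore requires a separate cancellation argument, which you do not identify: for both of those monomials one has $a<b$, which forces $\mu_2-a>\nu_2-b$ and hence $(\mu_2-a)+(\nu_2-b)>2(\nu_2-b)$, so the contribution $p_S(\nu_2-b,\ \mu_2+\nu_2-a-b)$ falls in Region~II of Figure~\ref{fig:chambers} and equals $\binom{\nu_2-b+2}{2}$, which is \emph{independent of $a$}. Moreover $\mu_2\ge\nu_2$ together with $a<b$ shows that $\nu_2-b\ge 0$ implies $\mu_2-a>0$, so either both contribute or neither does. Since they carry opposite signs in the alternant, their contributions cancel exactly, reducing nine terms to the seven of $P_\lambda$. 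You gesture at the possibility of cancellation (``they cancel among themselves or reduce'') but never supply this argument, and your claim that the support constraints ``eliminate all but the two levels'' is simply false. Also missing is the symmetric case $\lambda_2+\lambda_3>\lambda_1+\lambda_4$, where the same cancellation occurs at level $b=\lambda_1+\lambda_4+2$; the paper disposes of it by noting the two configurations differ by a row/column swap. Without the Region~II lemma and the explicit pairing, the theorem is not proved.
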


\begin{example}\label{Ex7terms} (\textbf{Minimality of the polynomial in Theorem~\ref{7termF22Alternant}}) Let $\lambda=(12,7,4,1), \mu=\nu=(12,12).$ 
From Theorem~\ref{7termF22Alternant}, the Kronecker coefficient 
$g_{\mu, \nu, \lambda}$ is the coefficient of $x^{12}y^{12}$ in the product $P_\lambda\bar{F}_{2,2}(x,y),$  where 
\[P_{\lambda}=y^5(x^8-x^{12}-x^{14}+x^{18}) +y^9(-x^{4}+x^{12}+x^{14}).\]
Using Eq.~(\ref{Exact}), we have $g_{\mu,\nu,\lambda}$ is equal to
\begin{align*}
&p_S(7,11)-p_S(7,7)-p_S(7,5)+p_S(7,1)-p_S(3,11)+p_S(3,3)+p_S(3,1)\\
&=32-20-12+2-10+6+2=0.
\end{align*}

This example is noteworthy because the Kronecker coefficient vanishes, but there is no cancellation between pairs of the seven coefficients above. By definition, the atomic coefficient is the contribution from the first monomial $y^5x^8$ in the expansion of $P_\lambda$ above, (it is also the lexicographically least monomial),  hence~$\tilde{g}_{\mu, \nu, \lambda}=p_S(7,11)=32.$
\end{example}  

A similar example where $\lambda$ has only 3 parts follows.
\begin{example}\label{Ex7terms2} Let $\lambda=(11,5,2,0), \mu=\nu=(9,9).$ Using Eq.~(\ref{Exact}), we again have that $g_{\mu,\nu,\lambda}$ equals
\begin{multline*}
p_S(7,11)-p_S(7,8)-p_S(7,4)+p_S(7,1)-p_S(3,11)+p_S(3,4)+p_S(3,0)\\
=32-24-9+2-10+8+1=0.
\end{multline*}
\end{example}
 
When $\lambda$ has two parts, 
at most the first six terms in \eqref{Exact} can contribute to the Kronecker coefficient, since the seventh term is necessarily zero. 
This follows because the second argument to the seventh (and last)  vector partition function in ~\eqref{Exact} is negative, using the fact that $\nu_2\le\mu_2\le\mu_1$ :
\[\nu_2+\mu_2-(\lambda_2+\lambda_1+2)\le \mu_1+\mu_2 
-(|\lambda|+2)= -2.\]

 \begin{proof}[Proof of Theorem~\ref{7termF22Alternant}]  The  statement about which monomials $y^bx^a$ in $P_\lambda$ can make a nonzero contribution is a direct consequence of Proposition~\ref{barF22}. 
 Eqn.~\eqref{Exact} follows from the polynomial $P_\lambda$ because of the following observation:
 If $x^ay^b$ is a monomial in $P_\lambda$ making a nonzero contribution to $g_{\mu,\nu,\lambda}$, that value is 
 \[\pm[s_0^{\nu_2-b}s_1^{\mu_2+\nu_2-a-b}]F_{2,2}=\pm p_S(\nu_2-b,\mu_2+\nu_2-a-b).\] This follows from the substitution $x\mapsto s_1$ and $y\mapsto s_0s_1$, Proposition~\ref{barF22} and Section~\ref{F22}. 
  
 To prove the assertion in 
  ~\eqref{explicitF22extraction}, we manipulate the alternant directly.  The first part of the proof consists of a judicious choice in expanding the determinant, followed by a careful analysis of the resulting monomials. Twelve of these are easily  shown to make a contribution of zero.  The final reduction to 7 terms is more delicate; it will be useful to consult Figure~\ref{fig:chambers} in Section~\ref{sec-Polytopes}, since the precise formula for the quasipolynomial $p_S(n,m)$ in one specific chamber will play a crucial role in the proof.

We start by expanding the alternant $a_{\lambda+\delta_4}$  by the fourth column.  Writing $A_{i,j}$ for the minor of the entry in row $i$ and column $j,$ this gives 
 \[(xy)^{\lambda_4} A_{4,4}-(xy)^{\lambda_3+1} A_{3,4}
 +(xy)^{\lambda_2+2} A_{2,4}-(xy)^{\lambda_1+3} A_{1,4}.\]
 
 Each of these 3 by 3 minors will give 6 terms.  It follows from Eqn.~\eqref{Kronecker1stdef} that the contributions to the Kronecker coefficient are obtained by extracting the coefficient of $x^{\mu_2} y^{\nu_2}$ in the product of these monomials with $(x^{-2}y^{-1})\bar{F}_{2,2}.$ The four tables below, listed in the same order as the minors above, show the resulting monomials, \textit{with sign}, with the exponent of $x$ diminished by 2 and  the exponent of $y $ diminished by 1. For ease of reading we list only the exponents of $x$ and $y$. 

\begin{table}[h]
\begin{subtable}[t]{\textwidth}\small\center
\begin{tabular}{| c| c| c ||c| c| c| }\hline
   & Power of $x$  &Power of $y$ &  & Power of $x$  &Power of $y$\\
 \hline
  $(+)$ &$\lambda_4+\lambda_1+3-2 $&$\lambda_4+\lambda_2 +2-1$&$(-)$ & $\lambda_4+\lambda_2 +2-2$&$\lambda_4+\lambda_1+3-1$\\
 $(-)$ &$\lambda_4+\lambda_1+3-2 $ &$\lambda_4+\lambda_3+1-1$
 &$(+)$ &$\lambda_4+\lambda_3+1-2 $ &$\lambda_4+\lambda_1+3 -1$\\
 $(+)$ &$\lambda_4+\lambda_2 +2-2$  &$\lambda_4+\lambda_3+1-1$
 &$(-)$&$\lambda_4+\lambda_3+1-2$  &$\lambda_4+\lambda_2 +2-1$\\
\hline
 \end{tabular}
 \caption{Expansion of $(xy)^{\lambda_4} A_{4,4}x^{-2}y^{-1}$ }
 \end{subtable}
\begin{subtable}[t]{\textwidth}
\small\center
 \begin{tabular}{| c| c| c ||c| c| c| }\hline
  & Power of $x$  &Power of $y$ &  & Power of $x$  &Power of $y$\\
 \hline
$(-)$ &$\lambda_3+\lambda_1+4-2 $&$\lambda_3+\lambda_2 +3-1$&$(+)$ & $\lambda_3+\lambda_2 +3-2$&$\lambda_3+\lambda_1+4-1$\\
$(+)$ &$\lambda_3+\lambda_1+4-2 $ &$\lambda_4+\lambda_3+1-1$
&$(-)$ &$\lambda_4+\lambda_3+1-2 $ &$\lambda_3+\lambda_1+4-1 $\\
$(-)$ &$\lambda_3+\lambda_2 +3-2$  &$\lambda_4+\lambda_3+1-1$
&$(+)$&$\lambda_4+\lambda_3+1-2$  &$\lambda_3+\lambda_2 +3-1$\\
\hline
\end{tabular}
\caption{Expansion of $-(xy)^{\lambda_3+1} A_{3,4}x^{-2}y^{-1}$}
\end{subtable}
\begin{subtable}[t]{\textwidth}
\small\center
\begin{tabular}{| c| c| c ||c| c| c| }\hline
   & Power of $x$  &Power of $y$ &  & Power of $x$  &Power of $y$\\
 \hline
  $(+)$ &$\lambda_2+\lambda_1+5-2 $&$\lambda_2+\lambda_3 +3-1$&$(-)$ & $\lambda_2+\lambda_3 +3-2$&$\lambda_2+\lambda_1+5-1$\\
 $(-)$ &$\lambda_2+\lambda_1+5-2 $ &$\lambda_2+\lambda_4+2-1$
 &$(+)$ &$\lambda_2+\lambda_4+2-2 $ &$\lambda_2+\lambda_1+5-1 $\\
 $(+)$ &$\lambda_2+\lambda_3 +3-2$  &$\lambda_2+\lambda_4+2-1$
 &$(-)$&$\lambda_2+\lambda_4+2-2$  &$\lambda_2+\lambda_3 +3-1$\\
 \hline
\end{tabular}
\caption{Expansion of $(xy)^{\lambda_2+2} A_{2,4}x^{-2}y^{-1}$}
\end{subtable}
\begin{subtable}[t]{\textwidth}
\center\small
\begin{tabular}{| c| c| c ||c| c| c| }\hline
  & Power of $x$  &Power of $y$ &  & Power of $x$  &Power of $y$\\
 \hline
 $(-)$ &$\lambda_1+\lambda_2+5-2 $&$\lambda_1+\lambda_3 +4-1$&$(+)$ & $\lambda_1+\lambda_3 +4-2$&$\lambda_1+\lambda_2+5-1$\\
 $(+)$ &$\lambda_1+\lambda_2+5-2 $ &$\lambda_1+\lambda_4+3-1$
 &$(-)$ &$\lambda_1+\lambda_4+3-2$ &$\lambda_1+\lambda_2+5-1 $\\
 $(-)$ &$\lambda_1+\lambda_3 +4-2$  &$\lambda_1+\lambda_4+3-1$
 &$(+)$&$\lambda_1+\lambda_4+3-2$  &$\lambda_1+\lambda_3 +4-1$\\
 \hline
\end{tabular}
\caption{Expansion of $-(xy)^{\lambda_1+3} A_{1,4}x^{-2}y^{-1}$}
\end{subtable}
\caption{Terms appearing in the co-factor expansions}
\end{table}

 If $x^a y^b$ is a monomial in the tables, then by Proposition~\ref{barF22}, we  must have $\nu_2\geq b$ and $\mu_2+\nu_2\geq a+b.$ The latter condition immediately eliminates all 6 monomials in Table~1~(D), since the sum of exponents there clearly (strictly) exceeds 
 $\sum_{i=1}^4 \lambda_i,$ whereas $\nu_2, \mu_2\leq |\lambda|/2.$
  For the same reason the monomials in the first two lines of Table~1~(C), as well as the two monomials in the first row of Table~1~(B), are also eliminated.  We are left with the following 12 monomials, from which we will  eliminate  the five underlined  terms,  leaving the seven monomials in the expression   
  ~\eqref{explicitF22extraction}.
  \begin{multline}\label{firstpassF22extraction}
 y^{\lambda_3+\lambda_4}  (x^{\lambda_2+\lambda_4} 
 -x^{\lambda_2+\lambda_3+1}-x^{\lambda_1+\lambda_4+1}
 +x^{\lambda_1+\lambda_3+2  })\\
 +y^{\lambda_2+\lambda_4+1}(x^{\lambda_2+\lambda_3+1}
 +x^{\lambda_1+\lambda_4+1})\\
 +x^{\lambda_3+\lambda_4-1}(- y^{\lambda_2+\lambda_4+1}+\underline{y^{\lambda_2+\lambda_3+2}}+\underline{\underline{ y^{\lambda_1+\lambda_4+2} }}
 -\udensdash{$y^{\lambda_1+\lambda_3+3}$})\\
 +x^{\lambda_2+\lambda_4} ( -\underline{y^{\lambda_2+\lambda_3+2} } - 
  \underline{\underline{ y^{\lambda_1+\lambda_4+2}}} )
 \end{multline}
 Examining  the term 
   \udensdash{$x^{\lambda_3+\lambda_4-1} y^{\lambda_1+\lambda_3+3}$}, 
 we see that this monomial \textit{cannot} contribute to the Kronecker coefficient, since we must have $\nu_2\geq \lambda_1+\lambda_3+3.$ 
 But this is impossible because it implies
 \[\sum_{i=1}^4\lambda_i=\nu_1+\nu_2\geq 2\nu_2\geq 2(\lambda_1+\lambda_3+3), \text{ i.e. }
  \lambda_2+\lambda_4\geq \lambda_1+\lambda_3+6.\]
 
 We  can also eliminate the remaining four underlined terms in ~\eqref{firstpassF22extraction}.
 First note the following crucial fact:  every monomial in the third and fourth  lines of ~\eqref{firstpassF22extraction} is of the form $x^ay^b$ where $a<b.$
 
 The coefficient of $x^{\mu_2}y^{\nu_2}$ in the product of each monomial $x^ay^b$ in the above polynomial with $\bar{F}_{2,2}(x,y)$ is equal to the coefficient of 
 $x^{\mu_2-a}y^{\nu_2-b}$ in $\bar{F}_{2,2}(x,y).$  
 From Section~\ref{F22} and Proposition~\ref{barF22}, this coefficient equals $p_S(n_b, m_{b,a})$ where $n_b=\nu_2-b, m_{b,a}=(\nu_2-b )+(\mu_2-a ).$  Note that $a<b$ implies $\mu_2-a>\mu_2-b \geq (\nu_2-b),$ and hence $m_{b,a} > 2 n_b.$  It follows from (II) in Figure~\ref{fig:chambers} that $p_S(n_b, m_{b,a})=\binom{n_b+2}{2}$ is independent of $a,$ the exponent of $x.$ This holds for every term in lines 3 and 4 of ~\eqref{firstpassF22extraction}.
 
 Examining the two underlined middle terms of the third line, we see that each of the terms can be matched up with a monomial with the same $y$ exponent in the fourth line, to give $x^{a_1}y^b -x^{a_2} y^b.$ These correspond to extracting from $\bar{F}_{2,2},$  the coefficients  of $x^{\mu_2-a_1}y^{\nu_2-b}$ and $x^{\mu_2-a_2}y^{\nu_2-b}.$  
 
 Since $\mu_2\geq \nu_2,$ we have $\mu_2-a_i\geq \nu_2-a_i>\nu_2-b$ (recall that  $a_i<b$).   A necessary condition for either monomial to make a nonzero contribution is for the exponent $\nu_2-b$ of $y$  to be nonnegative. In the present situation, this forces the exponent $\mu_2-a_i$ of $x$ to be nonnegative as well. Hence either both monomials $x^{\mu_2-a_i}y^{\nu_2-b}, i=1,2,$ contribute to the Kronecker coefficient, or neither does.  Since (from the preceding paragraph), the contributions are independent of the $a_i$ and \textit{equal}  (to $\binom{\nu_2-b+2}{2}$), and the monomials come with opposite sign, their combined contribution is zero.
 \end{proof}
   
   We shall see in Theorem~\ref{thm:AtomicIsMax} that the monomials in the polynomial $P_\lambda$ have some rather remarkable properties.

 \begin{remark}
Calculations of $\kappa_{2,2,4}$ were previously explicitly worked out in~\cite{BriandOrellanaRosasquasi} using an identity describing the Kronecker coefficient as a linear combination of \textit{reduced} Kronecker coefficients (See Section \ref{RKC})\cite[Theorem 4]{BriandOrellanaRosasquasi}. Their approach differs from ours, but does permit determination that the number of chambers in the corresponding chamber complex is~$74$. This approach can also be compared with  \cite{Rosas-PhD} where a combinatorial interpretation for $\kappa_{2,2,4}$ was found as the difference of the number of integer points in two rectangles (mod 2).  
 \end{remark}

  \subsection{Examples}\label{SheilaExamples}
We illustrate these results with some examples. Since we invoke the computation of the coefficients $p_S(n,m)$ of $s_0^ns_1^m$ in $F_{2,2}(s_0, s_1)$ from Section~\ref{sec-Polytopes} extensively, we record the values of the following special coefficients:
\[p_S(n,0)\!=\!1\!=\!p_S(0,m);\ p_S(n,1)=2, n>0; \ p_S(1,m)=3, m\geq 2.
\]

\begin{example} Let $\lambda=(6,5,4,1), \mu=\nu=(9,7).$ 
Note that $\lambda_3+\lambda_4=5, \mu_2+\nu_2=14,$ 
and thus, by checking the inequalities \eqref{1stalternantInequality} and \eqref{2ndalternantInequality},
 we see that  only three of the seven terms from the polynomial $P_\lambda$  contribute to $g_{\mu,\nu,\lambda}, $ specifically those in the expansion of
\[y^{5}(x^{6}-x^{8}) +y^{7}(-x^{4} ).\]
We obtain the value
$g_{\mu,\nu,\lambda}
 =p_S(2,3)-p_S(2,1)-p_S(0,3)=2.$
\end{example}

\begin{example}\label{ex:Atomic}
Our format is well suited to compute dilated Kronecker coefficients
(see  Section~\ref{Dilation}). Assume $k$ is a positive integer, and let $k\lambda=(6k,3k,2k), k\mu=(7k,4k)$ and $k\nu=(8k,3k)$. These are the dilations of the triple $((6,3,2), (7,4), (8,3))$.  The Kronecker coefficients are all atomic and given by the quasipolynomial $p_S(k, 2k) = (k+1)(k+2)/2$,  which counts integer points in dilations of the 2-simplex generated by $\{(0,0), (1,0), (0,1)   \}$. 
\end{example}

\subsection{Dilated Kronecker Coefficients}\label{Dilation}
 Fix $\mu,\nu$ and $\lambda$. The family of Kronecker coefficients given by the image of the function
 $k\mapsto g_{k\mu,k\nu,k\lambda}$, for $k=1, 2, \dots$ is a set of  \emph{dilated Kronecker coefficients},  and has been the center of a lot of attention.  When the lengths of the partitions $\mu,\nu,\lambda$ are bounded by 2,2, and 4, we can compute them using Theorem~\ref{thm:MAIN1},  and we can also write them as subseries of $F_{2,2}$ in a way that directly connects to vector partition functions. 

 \begin{example}[The Kronecker function does not count integer points in polytopes]\label{one-onecubed}
 Consider the dilated Kronecker coefficient $g_{(k,k), (k,k), (k,k)},$ for any positive integer $k$. 
 
By direct computation we see that only the first four monomials of $P_\lambda$ in Theorem~\ref{7termF22Alternant} contribute to the Kronecker coefficient. The formula of Section~\ref{F22} for $p_S(n,m)$ gives::
 \begin{align}
 g_{(k,k), (k,k), (k,k)} &=[x^ky^k](x^k-2x^{k+1}+x^{k+2})\bar{F}_{2,2}(x,y) \nonumber\\
 &=[s_0^k s_1^k] (1-2s_1+s_1^2)F_{2,2}(s_0, s_1) \label{eq:gkkk}\nonumber\\
 &=p_S(k,k)-2p_S(k,k-1)+p_S(k,k-2)\\
 &=\begin{cases} 1, &k \text{ even}\\
                                                       0, &k \text{ odd},
                                                       \end{cases}
 \end{align}

In striking contrast with the Littlewood-Richardson coefficients, the
Kronecker coefficients do not satisfy the saturation property:     There are holes in the Kronecker cone.
Counterexamples and related conjectures can be found in~\cite{King-Sevilla, BOR-CC, Christandl-PhD}.  
Figure~\ref{fig:Holes} illustrates their location in a small, visualizable case. 
\begin{figure}\center

\includegraphics[width=.6\textwidth]{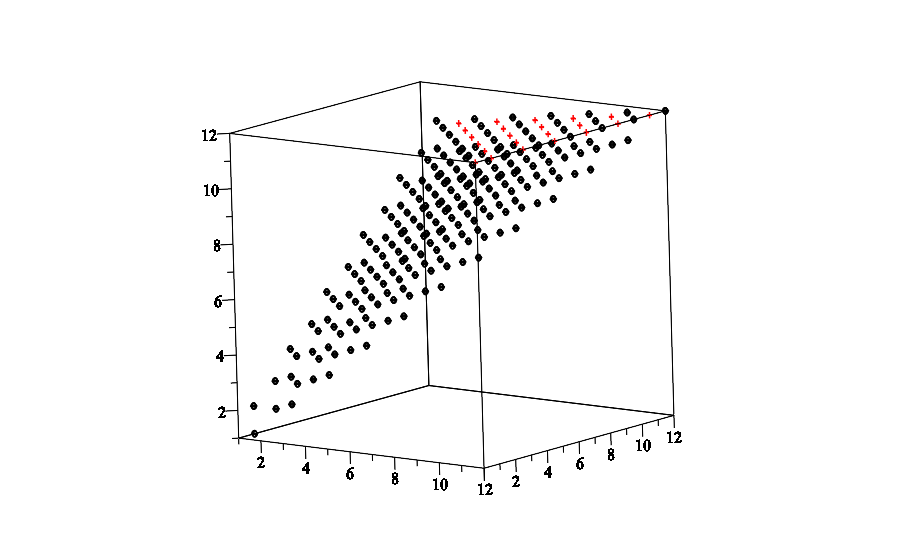}
\caption{Holes in the Kronecker cone when all three partitions are of length 2. The point at $(i,j,k)$ is black if $g_{(24-i, i) (24-j, j) (24-k, k)}$ is nonzero (assuming, $j\leq i\leq k\leq 24/2$). The points with red crosses, or no dots are 0. Note that the top face has both zero and nonzero values. These are the holes in the polytope.}
\label{fig:Holes}
\end{figure}

  The sequence $ g_{(k,k), (k,k), (k,k)}$, for $k\geq 0$ illustrates that the Kronecker coefficients cannot  possibly count points in the dilations of a polytope because such sequences are necessarily weakly increasing. That is, the Kronecker function does not count integer points in polytopes.  See~\cite{King-Sevilla, BOR-CC} for related results and conjectures. 
\begin{remark}[The holes of the Kronecker cone]
Example~\ref{one-onecubed}  illustrates the origin of the holes in the Kronecker cone  in Figure~\ref{fig:Holes}.  Note that the holes are all in
the face of the Kronecker cone defined by equations $\mu_1=\mu_2$, $\nu_1=\nu_2$, $\lambda_1=\lambda_2$, $\lambda_3=\lambda_4=0$.
It  is  always  the case that the zeros of the Kronecker cone are on its walls (facets) \cite{Manivel:asymptotics1}.

 This can also be seen for the example in Figure~\ref{fig:Holes} where the holes are all inside the face defined by $\lambda_1=\lambda_2$.
\end{remark}
\end{example}

\begin{example}  Consider an example of Baldoni and Vergne~\cite[Section 5.1.1]{BaldoniVergneWalter}, whose methods are quite different from ours. Let  $\lambda=(132,38,19,11),$  $\mu=(110,90),$ and $\nu=(120,80).$ We will compute an expression for the dilated Kronecker coefficient 
$g_{k\mu,k\nu,k\lambda}.$

We have $k(\mu_2+\nu_2)=170k,$ $k(\lambda_3+\lambda_4)=30k, $
$k(\lambda_2+\lambda_4)=49k,$ $ k\min(\lambda_2+\lambda_3, \lambda_1+\lambda_4)=k\min(57, 143)=57k.$ Eqn.~\eqref{Exact} of Theorem~\ref{7termF22Alternant} 
says that $g_{k\mu,\,k\nu,\,k\lambda}$ is equal to 
{\small
\[
p_S(50k, 91k)-p_S(50k,83k-1)
-p_S(31k-1, 91k-2)+p_S(31k-1,64k-2).\]}
From Section~\ref{F22}, the last two terms cancel each other because both correspond to Region II in Figure~\ref{fig:chambers}, 
and hence depend only on the first argument $n$ of $p_S(n,m).$  The two remaining terms  correspond to Region III.  The reader can check that using the formula for Region III  gives 
\[g_{k\mu,k\nu,k\lambda}=52k^2+\frac{25}{2}k+\frac{3}{4}+\frac{(-1)^k}{4},\]
in agreement with the result in \cite[Section 5.1.1]{BaldoniVergneWalter}.
\end{example}

 \subsection{Inequalities implying that a Kronecker coefficient is atomic}\label{appendix}
 
Analysing the order relations in the exponents appearing in Eqn.~\eqref{explicitF22extraction}  yields the following result.
 
 \begin{corollary}\label{atomicisKron} Assume $\mu_2 \ge \nu_2.$ The atomic coefficient equals the Kronecker coefficient if 
 \begin{enumerate}
 \item $\nu_2<\lambda_3+\lambda_4;$ 
 OR \item
 \begin{enumerate}
 \item $\lambda_3+\lambda_4\le \nu_2\leq \lambda_2+\lambda_4$ 
 and
 \item $(\lambda_3+\lambda_4)+(\lambda_2+\lambda_4) \leq 
 \mu_2+\nu_2\le 
 (\lambda_3+\lambda_4)+\min(\lambda_2+\lambda_3, \lambda_1+\lambda_4),$ 
 \end{enumerate}
 OR  \item $\lambda_2+\lambda_3+2\lambda_4 =
 \mu_2+\nu_2.$
 \end{enumerate}
 
  \end{corollary}

 \begin{proof} 
 We examine the exponents in ~\eqref{explicitF22extraction}. %
 Recall that a monomial $y^b x^a$ will make a nonzero contribution to 
 $g_{\mu,\nu,\lambda}$ if and only if 
$ b\leq \nu_2 \text{ and } b+a\leq \mu_2+\nu_2.$

In the first case both the atomic and Kronecker coefficient vanish.

We consider the second case.
The two exponents of $y$ in ~\eqref{explicitF22extraction} are ordered as follows:
 \begin{equation}\label{yInequality} \lambda_3+\lambda_4<\lambda_2+\lambda_4+1.\end{equation}

The five distinct exponents of $x$ occurring in \eqref{explicitF22extraction} satisfy 
 \begin{multline}\label{xInequality}\lambda_3+\lambda_4-1<\lambda_2+\lambda_4<\min(\lambda_2+\lambda_3+1, \lambda_1+\lambda_4+1)\\
 \leq \max(\lambda_2+\lambda_3+1, \lambda_1+\lambda_4+1)<\lambda_1+\lambda_3+2.
 \end{multline}
Compare with~\cite{Rosas-PhD}.  By considering the sequences of total degree  of the 7 monomials in each of the two lines of \eqref{explicitF22extraction},  we have the following two chains of inequalities:
 \begin{multline} \label{1stalternantInequality}
\lambda_2+\lambda_3+2\lambda_4
 <(\lambda_3+\lambda_4) +\min(\lambda_2+\lambda_3+1, \lambda_1+\lambda_4+1)\\
 \leq (\lambda_3+\lambda_4) +\max(\lambda_2+\lambda_3+1, \lambda_1+\lambda_4+1)
 < (\lambda_3+\lambda_4) +(\lambda_1+\lambda_3+2);
 \end{multline}
 \vskip-.2in
 
 \begin{multline} \label{2ndalternantInequality}
\lambda_2+\lambda_3+2\lambda_4
 <(\lambda_2+\lambda_4+1) +\min(\lambda_2+\lambda_3+1, \lambda_1+\lambda_4+1)\\
 \leq (\lambda_2+\lambda_4+1) +\max(\lambda_2+\lambda_3+1, \lambda_1+\lambda_4+1).
 \end{multline}
 A monomial $y^b x^a$ will make a nonzero contribution to 
 $g_{\mu,\nu,\lambda}$ if and only if 
$ b\leq \nu_2 \text{ and } b+a\leq \mu_2+\nu_2.$ 
In view of Eqn.~\eqref{yInequality},  the  condition on $\nu_2$ eliminates the possibility of any contribution to the Kronecker coefficient from the monomials in  the second  line of \eqref{explicitF22extraction}.
 Hence the subset of the remaining 4 monomials in Eqn.~\eqref{explicitF22extraction}  contributing to the Kronecker coefficient $g_{\mu, \nu, \lambda}$ is  determined by where the number $\mu_2+\nu_2$ falls in the consecutive  intervals determined by each of the inequalities Eqn.~\eqref{1stalternantInequality}.
 The  bounds on $\mu_2+\nu_2$ clearly  eliminate all but the first monomial, the atomic coefficient, in the first line of \eqref{explicitF22extraction}.

Finally consider the third case, $\lambda_2+\lambda_3+2\lambda_4 =
 \mu_2+\nu_2.$  
 Note that  $\nu_2\ge \lambda_2+\lambda_4+1$  is impossible
because it would force 
\[\mu_2=(\lambda_2+\lambda_4-\nu_2) +(\lambda_3+\lambda_4)\le
(\lambda_3+\lambda_4)-1 < \lambda_2+\lambda_4< \nu_2.\]
Hence $\nu_2\le \lambda_2+\lambda_4,$ and we are reduced to the first two cases.  This finishes the proof.
\end{proof} 
 
 From this we can easily deduce some conditions on the parts which ensure that the atomic Kronecker coefficients are an upper bound for the Kronecker coefficients.   In fact 
 Theorem~\ref{thm:AtomicIsMax} below states that NO restrictions on the parts of $\lambda$ are needed, as we will show in the next section.

\subsection{The atomic Kronecker coefficient is an upper bound for the Kronecker coefficient for the case $2-2-4$}

In this section we  show  that for   a triple of partitions $\lambda, \mu, \nu$ of the same integer, such that $\ell(\lambda)\leq 4, \ell(\mu), \ell(\nu)\leq 2,$ 
and $\mu_2\geq \nu_2 ,$ the atomic Kronecker coefficient $\tilde g_{\mu,\nu,\lambda}$ is always greater than or equal to the actual Kronecker coefficient $g_{\mu,\nu,\lambda}.$

We use our polyhedral geometry approach to prove this result.  Theorem~\ref{7termF22Alternant} and its applications showed how the Kronecker coefficient is completely determined by the functions $p_S(n,m).$  Our proof, depending heavily on the fact that the $p_S(n,m)$  are vector partition functions, consists of a careful analysis of the contributions of each term in the polynomial 
$P_\lambda$ in the proof of Theorem~\ref{7termF22Alternant}.

Our arguments will reveal  a remarkable relationship between the seven monomials in 
$P_\lambda.$    For brevity we will label the exponents of 
$y$ and $x$ appearing in Eqn.~\ref{explicitF22extraction} as follows:
\[b=\lambda_3+\lambda_4, a_0=\lambda_2+\lambda_4, 
a_1=\lambda_2+\lambda_3+1, a_2=\lambda_1+\lambda_4+1, 
a_3=\lambda_1+\lambda_3+2.\]
Combining Eqns.~\eqref{yInequality},~\eqref{xInequality},  we have the inequalities
\begin{equation}\label{expIneq}b\leq a_0<\{a_1, a_2\}<a_3.\end{equation}
The polynomial $P_\lambda$ is then
$P_\lambda=y^b(x^{a_0}-x^{a_1}-x^{a_2}+x^{a_3})
+y^{a_0+1}(-x^{b-1}+x^{a_1}+x^{a_2}).$

Recall that the first monomial, $y^b x^{a_0},$ is the one that determines the atomic Kronecker coefficient.
We will call this the atomic monomial.
The dependency digraph of Figure~\ref{fig:DepGraph} for the signed monomials 
in $P_\lambda$   is a consequence of Theorem~\ref{7termF22Alternant} and the inequalities~\eqref{expIneq},~\eqref{1stalternantInequality},~\eqref{2ndalternantInequality}. If  $M_1,$ $M_2$ are signed monomials,  a directed edge from node  $M_1$ to node $ M_2$ in the digraph signifies that if $M_1$ makes a nonzero contribution to the Kronecker coefficient (as described by Theorem~\ref{7termF22Alternant}), then so must the monomial $M_2$.
%
%

\begin{figure}
\begin{center}
\tikzstyle{line} = [draw, -latex']
\begin{tikzpicture}[scale=1.5]
  
  \node (M6) at (-1,6) {$\scriptstyle +y^{a_0+1} x^{a_1}$};
  \node (M7) at (1,6) {$\scriptstyle +y^{a_0+1} x^{a_2}$};
  \node (M5) at (0,5) {$\scriptstyle -y^{a_0+1} x^{b-1}$};
  \node (M1) at (0,4) {$\scriptstyle \mathbf{+y^b x^{a_0}}$};
  \node (M2) at (-1,3) {$\scriptstyle -y^b x^{a_1}$};
  \node (M3) at (1,3) {$\scriptstyle -y^b x^{a_2}$};
  \node (M4) at (0,2) {$\scriptstyle  +y^b x^{a_3}$};
  
   \path  [ultra thick,line, blue] (M4) -- (M3); \path [ultra thick,line] (M3) -- (M1);
  \path [ultra thick,line, red] (M4) -- (M2);  \path [ultra thick,line] (M2) -- (M1);
     \path [ultra thick,line]  (M5) -- (M1); 
     \path [ultra thick,line, red] (M6) -- (M5) ;
    \path [ultra thick,line, blue]  (M7) -- (M5);
     \path [ultra thick,line, blue] (M6) -- (M2); 
     \path [ultra thick,line, red]  (M7) -- (M3); 
 
\end{tikzpicture}
\end{center}
\caption{\small Dependency digraph for the monomials in $P_\lambda$  (the atomic monomial is in bold).  The blue and red arrows correspond to the two scenarios described in the proof of Theorem~\ref{thm:AtomicIsMax}.}
\label{fig:DepGraph}
\end{figure}

 We will examine the contribution to $g_{\mu,\nu,\lambda}$ of each of the three non-atomic monomials  in $P_\lambda$ with positive coefficient.  By Eqn.~(\ref{Exact}) from Theorem~\ref{thm:MAIN1}, this in turn will necessarily entail a detailed analysis of the vector partition function 
$p_S(n,m)$ of Section~\ref{F22}.    The final result exhibits the following surprising  phenomenon in the monomials of $P_\lambda.$  We will show that in fact, every non-atomic monomial with positive coefficient can be matched with a monomial with negative coefficient to yield a net nonpositive value (see the coloured arrows in Figure~\ref{fig:DepGraph}). For clarity of  exposition, the technical lemmas have been relegated to the Appendix at the end of the paper.

 \begin{theorem}~\label{thm:AtomicIsMax}  The atomic coefficient is an upper bound for the Kronecker coefficient in the case $2-2-4.$
 
  \end{theorem}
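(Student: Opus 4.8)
The plan is to work directly with the seven-term polynomial $P_\lambda(x,y)$ from Theorem~\ref{7termF22Alternant}, expressing $g_{\mu,\nu,\lambda}$ as a signed sum of seven Ehrhart values $p_S$, and to show that the positive contributions never overcome the atomic term $\tilde g_{\mu,\nu,\lambda}=[x^{\mu_2}y^{\nu_2}]\,y^{\lambda_3+\lambda_4}x^{\lambda_2+\lambda_4}\bar F_{2,2}(x,y)$. First I would reduce to the ``hard'' case: by Eqn.~\eqref{explicitF22extraction} a monomial $y^bx^a$ of $P_\lambda$ contributes iff $b\le\nu_2$ and $a+b\le\mu_2+\nu_2$; using the inequality chains \eqref{yInequality}, \eqref{1stalternantInequality}, \eqref{2ndalternantInequality}, the set of contributing monomials is determined by where $\nu_2$ falls relative to $\{\lambda_3+\lambda_4,\ \lambda_2+\lambda_4+1\}$ and where $\mu_2+\nu_2$ falls in the six-element degree chains. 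This produces a bounded list of combinatorial cases. In each case where fewer than all seven terms survive, Corollary~\ref{atomicisKron} or a direct two- or three-term estimate already gives $g_{\mu,\nu,\lambda}\le\tilde g_{\mu,\nu,\lambda}$ (indeed when only the atomic term survives we have equality), so the work concentrates on the regime where several of the positively-signed monomials $+x^{\lambda_1+\lambda_3+2}y^{\lambda_3+\lambda_4}$, $+x^{\lambda_2+\lambda_3+1}y^{\lambda_2+\lambda_4+1}$, $+x^{\lambda_1+\lambda_4+1}y^{\lambda_2+\lambda_4+1}$ are active.

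Next I would exploit the explicit chamber formula for $p_S(n,m)$ in Figure~\ref{fig:chambers}. Writing $n_b=\nu_2-b$ and $m_{b,a}=(\nu_2-b)+(\mu_2-a)$ for a monomial $x^ay^b$, each surviving term equals $\pm p_S(n_b,m_{b,a})$, a piecewise quadratic in the $\lambda_i,\mu_2,\nu_2$. The key structural observation is that the positive terms pair naturally with negative terms sharing the same $y$-exponent: on the line $b=\lambda_3+\lambda_4$ the four terms $x^{\lambda_2+\lambda_4}-x^{\lambda_2+\lambda_3+1}-x^{\lambda_1+\lambda_4+1}+x^{\lambda_1+\lambda_3+2}$ have the same $n_b=\nu_2-(\lambda_3+\lambda_4)$, and since $p_S(n_b,m)$ is a \emph{concave} piecewise-quadratic in $m$ that is eventually constant (equal to $\binom{n_b+2}{2}$ once $m\ge 2n_b$) and increasing before that, a discrete second-difference / telescoping argument shows the signed sum $p_S(n_b,m_1)-p_S(n_b,m_2)-p_S(n_b,m_3)+p_S(n_b,m_4)$ with $m_1\le m_2\le m_3\le m_4$ and $m_1+m_4=m_2+m_3$ is bounded above by $p_S(n_b,m_1)$ — this is exactly the statement that a concave sequence has nonpositive second differences. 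The three terms on the line $b=\lambda_2+\lambda_4+1$ are handled similarly, and I would show they contribute at most $p_S(n_{b'},m')-p_S(n_{b'},m'')\le 0$-ish corrections, or fold them into the first group.

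The main obstacle, and where the genuine casework lives, is the interaction between the two $y$-lines and the fact that $p_S$ is only \emph{piecewise} quadratic with three chambers (I, II, III of Figure~\ref{fig:chambers}): the four arguments $m_{b,a}$ for a fixed $b$ need not lie in a single chamber, so concavity of the global function $p_S(n_b,\cdot)$ must be verified across the breakpoints $m=n_b$ and $m=2n_b$, and one must check the period-$2$ fractional parts $\tfrac{7}{8}+\tfrac{(-1)^m}{8}$ do not spoil the inequality (they cancel in the telescoped differences since consecutive arguments differ by the fixed quantities $\lambda_2-\lambda_3$, $\lambda_1-\lambda_2$, etc., whose parities are controlled). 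I would organize this by first proving a clean lemma: for each fixed $n\ge 0$, the map $m\mapsto p_S(n,m)$ is nondecreasing and concave on $\mathbb Z_{\ge 0}$, and constant for $m\ge 2n$; then Theorem~\ref{thm:AtomicIsMax} follows by applying this lemma line-by-line to the grouped terms of $P_\lambda$, using the orderings \eqref{xInequality} and the identity $(\lambda_2+\lambda_4)+(\lambda_1+\lambda_3+2)=(\lambda_2+\lambda_3+1)+(\lambda_1+\lambda_4+1)$ to get the required $m_1+m_4=m_2+m_3$ balance. The remaining subtlety — that the second-line ($b=\lambda_2+\lambda_4+1$) positive terms, which have a \emph{larger} $b$ hence \emph{smaller} $n_b$, cannot push the total above the atomic value coming from the \emph{smaller} $b$ — is where I expect to need the finest estimate, comparing $\binom{\nu_2-(\lambda_2+\lambda_4+1)+2}{2}$-type quantities against $p_S(\nu_2-(\lambda_3+\lambda_4),\,\ast)$; I anticipate this is exactly the technical core deferred to the Appendix.
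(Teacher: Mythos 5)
Your high-level framework (work with the seven-term polynomial $P_\lambda$, express $g_{\mu,\nu,\lambda}$ as a signed sum of Ehrhart values $p_S$, and dominate the positive non-atomic contributions by the negative ones) is the right one, and matches the paper. But the key technical lemma you propose — that for fixed $n$, the map $m\mapsto p_S(n,m)$ is \emph{concave} — is false. From the chamber table in Figure~\ref{fig:chambers}: on Region I ($m\le n$) one has $p_S(n,m)=\tfrac{m^2}{4}+m+\tfrac{7+(-1)^m}{8}$, whose discrete second difference is $\tfrac12+\tfrac{(-1)^{m+1}}{2}\in\{0,1\}$, i.e.\ the function is \emph{convex} there; it is concave only on Region III ($n\le m\le 2n$), then constant on Region II. So $p_S(n,\cdot)$ is convex-then-concave-then-constant, and the second-difference/telescoping inequality you invoke does not hold globally. (Also, your labeling $m_1\le m_2\le m_3\le m_4$ with the bound ``$\le p_S(n_b,m_1)$'' is inconsistent with the fact that the atomic monomial $y^bx^{a_0}$ has the \emph{smallest} $x$-exponent and hence the \emph{largest} argument $m$; with the correct assignment, the four-term inequality for the $b=\lambda_3+\lambda_4$ line is in fact trivial from monotonicity alone and needs no concavity.)

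The more serious gap is the treatment of the second $y$-line $b'=\lambda_2+\lambda_4+1$. That line carries \emph{two} positive monomials $+y^{b'}x^{a_1},+y^{b'}x^{a_2}$ and only \emph{one} negative monomial $-y^{b'}x^{b-1}$, so a ``pair within the same $y$-line'' strategy cannot, by itself, produce a nonpositive net contribution from that line; there will always be one positive term left over. You acknowledge this (``or fold them into the first group'', ``this is exactly the technical core deferred to the Appendix''), but you do not actually propose a mechanism. This is precisely where the paper's argument diverges from yours: the paper pairs monomials \emph{across} the two $y$-lines (e.g., $+y^{a_0+1}x^{a_i}$ is offset by $-y^bx^{a_i}$, a monomial with the \emph{smaller} $y$-exponent $b$), organized by the dependency digraph in Figure~\ref{fig:DepGraph} and the two ``scenarios'' therein; the needed estimates are chamber-specific comparisons (Lemmas~\ref{pSboundBinomCoeff}--\ref{lastmonomial}), each relying on monotonicity of $p_S$ in its arguments or on an explicit Region-III computation, never on a global concavity of $p_S(n,\cdot)$. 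In short: your reduction to Ehrhart values and your case structure are sound, but the proposed convexity/concavity lemma is wrong and the within-line pairing leaves the hardest part — the positive surplus on the $b'=\lambda_2+\lambda_4+1$ line — unaddressed.
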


\begin{proof}
 The atomic Kronecker coefficient is determined by only  the first monomial
$y^b x^{a_0}.$  In order to prove that the result of the corresponding  coefficient extraction from $F_{2,2}$ is never less 
than the actual Kronecker coefficient, it suffices to show that the contribution of the  three remaining (non-atomic) positively signed  monomials, viz. 
$+y^{a_0+1} x^{a_1}, +y^{a_0+1} x^{a_2}, +y^bx^{a_3}$
 is offset by that of the three negative ones,
$ -y^{b} x^{a_1}, -y^{b} x^{a_2}, -y^{a_0+1} x^{b-1}.$

More precisely, we say that a positive monomial $+M_1$ is \textit{offset} by a negative monomial $-M_2$ if the contribution of 
$M_1-M_2$ to the Kronecker coefficient is \textit{nonpositive}. 

Lemmas~\ref{pSboundBinomCoeff} to~\ref{lastmonomial} in the Appendix will establish that one of the following two scenarios,  corresponding respectively to the blue arrows and the red arrows in Figure~\ref{fig:DepGraph}, \textit{must} occur.  

The contribution of 
\begin{enumerate}  
\item    $+y^{a_0+1} x^{a_1}$  is offset by $-y^{b} x^{a_1}$ AND
\item $+y^{a_0+1} x^{a_2}$ is offset by $-y^{a_0+1} x^{b-1}$ AND 
\item    $+y^b x^{a_3}$ is offset by $-y^b x^{a_2};$ 
\end{enumerate}
OR the contribution of 
\begin{enumerate}  
\item  $+y^{a_0+1} x^{a_1}$  is offset by $-y^{a_0+1} x^{b-1}$ AND 
\item    $+y^{a_0+1} x^{a_2}$ is offset by$-y^{b} x^{a_2}$  AND 
\item    $+y^b x^{a_3}$ is offset by $-y^b x^{a_1}.$ 
\end{enumerate}

The above two scenarios show that, apart from the monomial $y^bx^{a_0},$  whenever there is a contribution from a positively signed monomial in $P_\lambda$ to the Kronecker coefficient, there is an offsetting negatively signed monomial which also contributes, resulting in a net \textit{nonpositive} contribution.

This completes the proof that the monomial $y^bx^{a_0}$ gives the maximal contribution to the Kronecker coefficient, i.e. that $\tilde g_{\mu,\nu,\lambda}$ is an upper bound. \end{proof}

 \subsection{Bravyi's  vanishing conditions} 

 Given a partition $\lambda$, denote by  $\bar \lambda$  the partition obtained from $\lambda$ after deleting its first part. 
 Murnaghan  discovered  a necessary condition for the Kronecker coefficient $g_{\lambda, \mu, \nu}$ to be nonzero.
He showed that the following inequality has to hold:
\begin{align}\label{MurnaghanIn}
|\bar \lambda| \le |\bar \mu| + |\bar \nu|,
\end{align}
 Note that since the Kronecker coefficients are symmetric under permutations of the index, there are really three inequalities.
 
 The following stronger result (due to Bravyi) follows from our methods.  The reader may want to compare with Proposition \ref{ineqatomiccone22}.
\begin{proposition}[Bravyi~\cite{Kirillov:saturation, Bravyi}]
\label{nonzeroKron}  

Assume $\ell(\lambda)\leq 4, \ell(\mu), \ell(\nu)\leq 2.$  The Kronecker coefficient is zero if $\nu_2<\lambda_3+\lambda_4$ or $\mu_2+\nu_2<\lambda_2+\lambda_3+2\lambda_4 .$  Equivalently, if the Kronecker coefficient $g_{\mu, \nu,\lambda}$  is nonzero then Bravyi's inequalities~\eqref{Bravyi} are satisfied. \end{proposition}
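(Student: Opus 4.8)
The plan is to deduce Proposition~\ref{nonzeroKron} as an essentially immediate corollary of the explicit descriptions of $g_{\mu,\nu,\lambda}$ already obtained in this case. Two short routes are available, and I would present the second (more elementary) one, perhaps mentioning the first.

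\emph{Route 1 (via the atomic upper bound).} By Theorem~\ref{thm:AtomicIsMax}, $0\le g_{\mu,\nu,\lambda}\le\tilde g_{\mu,\nu,\lambda}$, and by Proposition~\ref{ineqatomiccone22} the atomic coefficient $\tilde g_{\mu,\nu,\lambda}$ is nonzero precisely when both Bravyi inequalities \eqref{Bravyi} hold. So if $\nu_2<\lambda_3+\lambda_4$ or $\mu_2+\nu_2<\lambda_2+\lambda_3+2\lambda_4$, then $\tilde g_{\mu,\nu,\lambda}=0$, and the squeeze gives $g_{\mu,\nu,\lambda}=0$; contraposing proves the ``equivalently'' clause. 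Here the symmetry $g_{\mu,\nu,\lambda}=g_{\nu,\mu,\lambda}$ is used to apply the asymmetric second inequality in whichever orientation is needed.

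\emph{Route 2 (directly from the seven-term formula).} Assume $\mu_2\ge\nu_2$ (the other case follows by swapping $\mu$ and $\nu$ and using symmetry of the Kronecker coefficient). By part (1) of Theorem~\ref{7termF22Alternant}, $g_{\mu,\nu,\lambda}$ is the coefficient of $x^{\mu_2}y^{\nu_2}$ in $P_\lambda(x,y)\,\bar{F}_{2,2}(x,y)$, and a monomial $y^bx^a$ of $P_\lambda$ can contribute to this coefficient only if $b\le\nu_2$ and $a+b\le\mu_2+\nu_2$. Now read off from \eqref{explicitF22extraction} that the minimum exponent of $y$ among the seven monomials of $P_\lambda$ is $\lambda_3+\lambda_4$ (by \eqref{yInequality}) and the minimum total degree among them is $\lambda_2+\lambda_3+2\lambda_4$ (by the chains \eqref{1stalternantInequality}--\eqref{2ndalternantInequality}), both minima being attained. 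Consequently: if $\nu_2<\lambda_3+\lambda_4$, every monomial $y^bx^a$ of $P_\lambda$ has $b\ge\lambda_3+\lambda_4>\nu_2$, contributes nothing, and $g_{\mu,\nu,\lambda}=0$; and if $\mu_2+\nu_2<\lambda_2+\lambda_3+2\lambda_4$, every monomial has $a+b\ge\lambda_2+\lambda_3+2\lambda_4>\mu_2+\nu_2$, contributes nothing, and again $g_{\mu,\nu,\lambda}=0$. Contraposing gives \eqref{Bravyi} whenever $g_{\mu,\nu,\lambda}\ne 0$.

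There is no real obstacle here: all the substance lies in results already established (Theorem~\ref{7termF22Alternant}, equivalently Theorem~\ref{thm:MAIN1}, together with the ordering chains \eqref{yInequality} and \eqref{1stalternantInequality}--\eqref{2ndalternantInequality}, and in Route~1 also Theorem~\ref{thm:AtomicIsMax}). The two points deserving a moment's care are (i) verifying that $\lambda_3+\lambda_4$ is \emph{exactly} the smallest $y$-exponent and $\lambda_2+\lambda_3+2\lambda_4$ \emph{exactly} the smallest total degree occurring in $P_\lambda$ — so that the Bravyi thresholds are matched on the nose, not merely bounded — and (ii) the bookkeeping of the symmetry $g_{\mu,\nu,\lambda}=g_{\nu,\mu,\lambda}$, so that the hypothesis $\mu_2\ge\nu_2$ of Theorem~\ref{7termF22Alternant} entails no loss of generality. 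Finally I would remark that the first Bravyi inequality strengthens Murnaghan's vanishing condition $\mu_2+\nu_2\ge\lambda_2+\lambda_3+\lambda_4$, since $\lambda_4\ge0$.
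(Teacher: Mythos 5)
Your Route~2 is precisely the argument the paper gives: after assuming $\mu_2\ge\nu_2$ without loss of generality, the proof reads off from $P_\lambda$ in \eqref{explicitF22extraction} that the two $y$-exponents are both $\ge\lambda_3+\lambda_4$ (by \eqref{yInequality}), and that the first monomial in each line has total degree exactly $\lambda_2+\lambda_3+2\lambda_4$ while all others have strictly larger total degree (by \eqref{1stalternantInequality}--\eqref{2ndalternantInequality}), so the contribution conditions $b\le\nu_2$ and $a+b\le\mu_2+\nu_2$ from Theorem~\ref{7termF22Alternant}(1) fail uniformly under either Bravyi hypothesis. Your Route~1 is a legitimate alternative, but it leans on Theorem~\ref{thm:AtomicIsMax}, a substantially harder result whose proof occupies the appendix and logically appears after this proposition; the paper's choice of Route~2 keeps the vanishing statement elementary and self-contained, depending only on the seven-term formula and the exponent orderings.
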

\begin{proof} We assume without loss of generality that $\mu_2\geq \nu_2.$
We will use the polynomial $P_\lambda$ of Theorem~\ref{7termF22Alternant}.

First suppose $\nu_2<\lambda_3+\lambda_4$.  Then we have 
$\nu_2<\lambda_3+\lambda_4\le \lambda_2+\lambda_4.$  Examining the polynomial 
$P_\lambda$ in~\eqref{explicitF22extraction}, we see that none of the monomials $y^bx^a$ makes a contribution since the condition $b\leq \nu_2$ is violated for both exponents $b$ of $y$ in $P_\lambda.$

Now suppose $\mu_2+\nu_2<\lambda_2+\lambda_3+2\lambda_4.$  Observe that 
$\lambda_2+\lambda_3+2\lambda_4$ is precisely the sum of the exponents for the first monomials $y^bx^a$ in each of the first two lines of the polynomial in~\eqref{explicitF22extraction}. Hence the  condition $b+a\leq \mu_2+\nu_2$ is violated for these two monomials.   But the sum of exponents $b+a$ for each of the other monomials in~\eqref{explicitF22extraction} is strictly greater than the sum for the first monomial in each line, so the condition is violated for all the monomials in $P_\lambda.$ \end{proof}

\subsection{A closed formula for the reduced Kronecker coefficients}\label{section:reduced}

Murnaghan 
  also observed that the sequences of Kronecker coefficients 
$ \big(g(\lambda+(k), \mu+(k), \nu+(k)) \big)_{k\ge0}$
 always  stabilize. 
   Their stable value is known as  the \emph{reduced Kronecker coefficient} and denoted by $
 \bar g_{\bar \lambda, \bar  \mu, \bar  \nu}
$, where, given a partition $\lambda$, we denote by $\bar \lambda$ the partition obtained from $\lambda$ deleting its first part.

 \begin{proposition}\label{Stability2parts}  Let $\lambda_2 \ge \mu_2\geq \nu_2.$ Assume $\lambda$ has at most two parts. Then $g_{\mu,\nu,\lambda}$ is independent of $\lambda_1$ as soon as $\lambda_1\geq \mu_2+\nu_2.$
Moreover, the stable value is $p_S(\nu_2, \nu_2+\mu_2-\lambda_2)-p_S(\nu_2, \nu_2+\mu_2-\lambda_2-1).$ Explicitly,  let $\ell = \nu_2+\mu_2-\lambda_2.$   Then:
  \begin{align}\label{reduced224}
  \bar g_{(\lambda_2),(\mu_2),(\nu_2)}&= \begin{cases}
\frac{\ell}{2} + \frac{ 3 + (-1)^{\ell}}{4} =  \left \lfloor \frac{\ell}{2} \right \rfloor +1 &\text{ if } \mu_2+\nu_2 \ge  \lambda_2\\
0 &\text{ if } \mu_2+\nu_2< \lambda_2
\end{cases}
 \end{align} 
 \begin{proof} We have $\lambda_3=\lambda_4=0.$
 For any statement $S$, we write $\delta(S) $ to mean 1 if $S$ is true and 0 otherwise.
 From Eqn.~(\ref{Exact}) of Theorem~\ref{7termF22Alternant}, the  Kronecker coefficient is
 \small{
 \begin{multline*}
 p_S(\nu_2, \nu_2+ \mu_2-\lambda_2)
 -p_S(\nu_2, \nu_2+ \mu_2-(\lambda_2+1))\\
 -p_S(\nu_2, \nu_2+ \mu_2-(\lambda_1+1))\cdot {\scriptstyle\mathbf{\delta(\mu_2+\nu_2\geq \lambda_1+1)}}
+  p_S(\nu_2, \nu_2+ \mu_2-(\lambda_1+2))\cdot {\scriptstyle\mathbf{\delta(\mu_2+\nu_2\geq \lambda_1+2)}}\\
-p_S(\nu_2-(\lambda_2+1), \nu_2-(\lambda_2+1)+\mu_2+1)
+p_S(\nu_2-(\lambda_2+1), \nu_2-(\lambda_2+1)+\mu_2-(\lambda_2+1))\\
+0\cdot p_S(\nu_2-(\lambda_2+1), \nu_2-(\lambda_2+1)+\mu_2-(\lambda_1+1))
 \end{multline*}}
 The zero coefficient in the last line is explained by the fact that $ (\lambda_1+\lambda_2)/2\geq \mu_2\geq \nu_2, $ and thus we always have $\mu_2+\nu_2\leq \lambda_1+\lambda_2.$  
 
The  hypothesis that  $\lambda_1\geq \mu_2+\nu_2$  eliminates the two terms with  $\lambda_1$ in their arguments,   establishing a stable value.  The reduced  Kronecker coefficient is thus given by 
 \begin{multline}\label{PropReduced}
 \mathbf{p_S}(\nu_2, \nu_2+ \mu_2-\lambda_2)
 -\mathbf{p_S}(\nu_2, \nu_2+ \mu_2-(\lambda_2+1))\\
-\mathbf{p_S}(\nu_2-(\lambda_2+1), \nu_2+\mu_2-\lambda_2)
+\mathbf{p_S}(\nu_2-(\lambda_2+1), \nu_2+\mu_2-2(\lambda_2+1))
 \end{multline}
 
Of these four terms, since $\lambda_2\ge\mu_2\ge\nu_2,$ the third and  fourth terms are immediately eliminated because the first argument is negative: 
 $\nu_2-\lambda_2-1\le -1.$   
 
 If $\mu_2+\nu_2< \lambda_2,$  both first and second terms are identically zero.
 
 If $\mu_2+\nu_2= \lambda_2,$ only the first term appears, but it must be 1 from the boundary values $p_S(n,0)=1=p_S(0,m)$  recorded in Section~\ref{SheilaExamples}.
 
If $\mu_2+\nu_2> \lambda_2,$  the first two vector partition functions both appear.  Since  $\lambda_2-\mu_2\ge 0$ implies $\nu_2\ge\ell=\nu_2+\mu_2-\lambda_2,$ 
 both  are computed using the quasipolynomial corresponding to Region I in Figure~\ref{fig:chambers}, and consequently  the reduced Kronecker coefficient is 
\[\mathbf{p_S}(\nu_2,\ell)-\mathbf{p_S}(\nu_2,\ell-1)
=\frac{\ell}{2} + \frac{ 3 + (-1)^{\ell}}{4}.\] 
The proof is now complete.
 \end{proof}
 \end{proposition}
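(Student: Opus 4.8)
\textbf{Proof proposal for Proposition~\ref{Stability2parts}.}

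The plan is to specialize the exact formula Eqn.~\eqref{Exact} of Theorem~\ref{7termF22Alternant} to the case $\lambda_3=\lambda_4=0$ and then carefully track which of the seven Ehrhart terms $p_S$ survive, given the hypotheses $\lambda_2\geq\mu_2\geq\nu_2$ and (eventually) $\lambda_1\geq\mu_2+\nu_2$. First I would substitute $\lambda_3=\lambda_4=0$ into the seven summands. This makes the first argument of each $p_S$ in the first four terms equal to $\nu_2$, while the first argument in the last three terms becomes $\nu_2-(\lambda_2+1)$. The key structural observation is that $\lambda_2\geq\nu_2$ forces $\nu_2-(\lambda_2+1)\leq -1<0$, so by Proposition~\ref{ineqatomiccone22} (equivalently, the chamber description in Section~\ref{F22}) all three of the last terms vanish identically. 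So only the first four terms can contribute.

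Next I would handle the two terms containing $\lambda_1$ in their argument. One must be careful here: $p_S$ is only nonzero when its arguments are nonnegative, so before the stabilization hypothesis is imposed these terms carry implicit $\delta$-functions $\delta(\mu_2+\nu_2\geq\lambda_1+1)$ and $\delta(\mu_2+\nu_2\geq\lambda_1+2)$, exactly as displayed in the first long equation of the proof. Imposing $\lambda_1\geq\mu_2+\nu_2$ makes both these indicators vanish, which is precisely the mechanism producing the stable value; this is the cleanest way to phrase the independence-of-$\lambda_1$ claim. We are then left with the two-term expression $p_S(\nu_2,\nu_2+\mu_2-\lambda_2)-p_S(\nu_2,\nu_2+\mu_2-(\lambda_2+1))$, which I would record as the displayed formula for the reduced coefficient $\bar g_{(\lambda_2),(\mu_2),(\nu_2)}$.

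Finally I would evaluate this difference using the explicit chamber formulas from Figure~\ref{fig:chambers}. Writing $\ell=\nu_2+\mu_2-\lambda_2$, the hypothesis $\lambda_2\geq\mu_2$ gives $\nu_2\geq\ell$, so both $p_S(\nu_2,\ell)$ and $p_S(\nu_2,\ell-1)$ (when the second argument is nonnegative) lie in Region~I of the chamber complex, where $p_S(n,m)=\frac{m^2}{4}+m+\frac78+\frac{(-1)^m}{8}$. I would then split into three subcases according to the sign of $\ell$: if $\ell<0$ (i.e. $\mu_2+\nu_2<\lambda_2$), both terms are zero; if $\ell=0$, only the first term survives and equals $p_S(\nu_2,0)=1$ by the boundary values recorded in Section~\ref{SheilaExamples}; and if $\ell\geq 1$, subtracting the two Region~I quasipolynomials gives $\frac{\ell^2}{4}-\frac{(\ell-1)^2}{4}+1+\frac{(-1)^\ell-(-1)^{\ell-1}}{8}=\frac{\ell}{2}+\frac{3+(-1)^\ell}{4}$, which one checks equals $\lfloor\ell/2\rfloor+1$. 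These three subcases assemble into the stated piecewise formula.

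The only genuinely delicate point — the ``main obstacle'' — is bookkeeping the nonnegativity constraints on the $p_S$ arguments correctly: one must verify that the three terms with first argument $\nu_2-(\lambda_2+1)$ really are always killed by $\lambda_2\geq\nu_2$ (not merely generically), that the two $\lambda_1$-terms are the only ones sensitive to the stabilization hypothesis, and that the surviving two terms genuinely fall in Region~I so that the clean closed form applies; everything else is routine arithmetic with the quadratic quasipolynomial.
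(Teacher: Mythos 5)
Your proposal follows essentially the same path as the paper's own proof: specialize Eqn.~\eqref{Exact} to $\lambda_3=\lambda_4=0$, kill the three terms whose first $p_S$-argument is $\nu_2-(\lambda_2+1)\le -1$ using $\lambda_2\ge\nu_2$, kill the two $\lambda_1$-terms via the hypothesis $\lambda_1\geq\mu_2+\nu_2$, then do the three-case evaluation of $p_S(\nu_2,\ell)-p_S(\nu_2,\ell-1)$ in Region~I. Your organization is marginally cleaner — you eliminate all three terms with negative first argument at once, whereas the paper first disposes of the seventh term via an indicator argument and removes the fifth and sixth only after passing to the four-term expression~\eqref{PropReduced} — but the underlying argument and the case analysis are the same, and your arithmetic checks out.
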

 Using the symmetry of the Kronecker coefficients with respect to the three partitions, we immediately have:

\begin{figure}\center
\begin{minipage}[t]{.5\textwidth}
 \begin{tikzpicture}
\node[inner sep=0pt] (picture1) at (0,0) {\includegraphics[width=\textwidth]{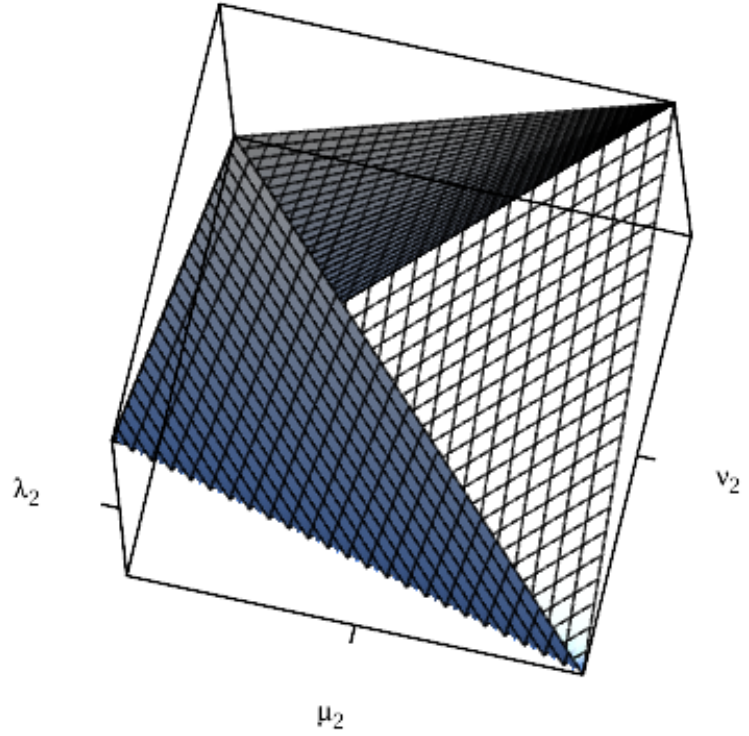}};%
\node[label=left:{$(0,0,0)$}] at (-1.8,-.5) {$\bullet$};

\end{tikzpicture}
\end{minipage}
\hfill
\begin{minipage}[b]{.38\textwidth}
\begin{tikzpicture}[scale=.4]
 \foreach \n in {1,2,3}{
        \node at ({\n*360/3+45}:4cm) (n\n) {};
        \draw[color=white] (0,0)--(n\n);
        
    }
    \fill[opacity=.4] (0,0) -- (45:4cm) -- (165:4cm);
  \fill[opacity=.2, color=- gray] (0,0) -- (45:4cm) -- (285:4cm);
     \fill[opacity=.6, color=blue!20!gray] (0,0) -- (285:4cm) -- (165:4cm);
  \draw[color=white] (n1) -- (n2) -- (n3) -- (n1);
  \node at (105:1cm) {$III$};
  \node at (-15:1cm) {$II$}; 
  \node at (225:1cm) {$I$};
\end{tikzpicture}
\vspace{2cm}
\mbox{}
\end{minipage}

\caption{The chamber complex for the reduced Kronecker coefficients indexed by three one--row shapes. The reduced coefficients are zero outside  the tetrahedra,
one on all the boundaries, and grow linearly as we move towards the center of one of the chambers. The interior of the cone is divided into three chambers.}
\label{fig:RKCchamber}
\end{figure}

 \begin{corollary} Fix $\lambda_2, \mu_2, $ and $\nu_2$. Let $a=\max(\lambda_2, \mu_2, \nu_2)\geq b\geq c=\min(\lambda_2, \mu_2, \nu_2)$ be a total ordering of $\lambda_2, \mu_2, \nu_2.$ 
 Set  $\ell =b+c-a.$  Then
   \begin{align}
   \bar g_{(\lambda_2),(\mu_2),(\nu_2)}&=    \bar g_{(a),(b),(c)}
   =
   \begin{cases}
\frac{\ell}{2} + \frac{ 3 + (-1)^{\ell}}{4} =  \left \lfloor \frac{\ell}{2} \right \rfloor +1 &\text{ if } \ell \ge  0\\
0 &\text{ if } \ell < 0.
\end{cases}
 \end{align} 
The chamber complex for this quasipolynomial is illustrated  in Figure~\ref{fig:RKCchamber}. 
The walls are the hyperplanes $I) : \mu_2+\nu_2=\lambda_2$,
$II ) : \mu_2+\lambda_2=\nu_2$, $III ) : \lambda_2+\nu_2=\mu_2.$ The  reduced Kronecker coefficient  indexed by points on any of these walls always has value  equal to one.
 \end{corollary}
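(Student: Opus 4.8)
The plan is to derive the corollary as a direct consequence of Proposition~\ref{Stability2parts} together with the well-known $S_3$-symmetry of the Kronecker coefficients under permutation of the three indexing partitions. The key point is that Proposition~\ref{Stability2parts} computes $\bar g_{(\lambda_2),(\mu_2),(\nu_2)}$ only under the hypothesis $\lambda_2\ge\mu_2\ge\nu_2$, so the first step is to remove that ordering assumption.

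First I would invoke the fact that $g_{\mu,\nu,\lambda}=g_{\sigma(\mu),\sigma(\nu),\sigma(\lambda)}$ for any permutation $\sigma\in S_3$, and note that passing to the stable (reduced) limit preserves this symmetry, so $\bar g_{\bar\lambda,\bar\mu,\bar\nu}$ is symmetric in its three arguments. In particular, for one-row shapes, $\bar g_{(\lambda_2),(\mu_2),(\nu_2)}$ depends only on the multiset $\{\lambda_2,\mu_2,\nu_2\}$. Hence, given arbitrary $\lambda_2,\mu_2,\nu_2$, I can reorder them as $a\ge b\ge c$ and write $\bar g_{(\lambda_2),(\mu_2),(\nu_2)}=\bar g_{(a),(b),(c)}$. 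Now Proposition~\ref{Stability2parts} applies directly with $\lambda_2\mapsto a$, $\mu_2\mapsto b$, $\nu_2\mapsto c$ (the hypothesis $\lambda_2\ge\mu_2\ge\nu_2$ is exactly $a\ge b\ge c$), and it yields the value in terms of $\ell=b+c-a$: namely $\lfloor \ell/2\rfloor+1=\frac{\ell}{2}+\frac{3+(-1)^\ell}{4}$ when $b+c\ge a$ (i.e. $\ell\ge 0$) and $0$ when $b+c<a$ (i.e. $\ell<0$). This is precisely the claimed formula.

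For the second assertion, about the chamber complex, I would observe that the three cases "which of $\lambda_2,\mu_2,\nu_2$ is the strict maximum" partition the interior of the cone into three chambers, separated by the three hyperplanes where two of the coordinates sum to the third: $\mu_2+\nu_2=\lambda_2$, $\mu_2+\lambda_2=\nu_2$, and $\lambda_2+\nu_2=\mu_2$. On the chamber where $a=\lambda_2$ say, the coefficient is the quasipolynomial $\lfloor \ell/2\rfloor+1$ in $\ell=\mu_2+\nu_2-\lambda_2$, and the symmetric statement holds on the other two chambers; outside the union of closed chambers (equivalently, outside the tetrahedral region cut out by $\ell\ge 0$ in each ordering) the coefficient is $0$. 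On any of the three walls one of the defining equalities holds, which forces $\ell=0$ in the relevant ordering, and then the formula gives $\lfloor 0/2\rfloor+1=1$. I would also remark that this matches Figure~\ref{fig:RKCchamber}: the reduced coefficients vanish outside the tetrahedron, equal one on all boundary faces, and grow linearly towards the interior of each of the three chambers.

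The only mild subtlety — not really an obstacle — is making sure the reduction via symmetry is legitimate, i.e. that the stabilization procedure defining $\bar g$ commutes with permuting the partitions; this is immediate since the stabilizing operation adds the same number $k$ to the first part of each partition, and this operation is itself $S_3$-equivariant, so the limit inherits the symmetry of the finite Kronecker coefficients. Everything else is a routine substitution into Proposition~\ref{Stability2parts} and a rereading of its case distinction in terms of the reordered variables.
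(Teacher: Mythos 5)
Your argument is correct and is essentially the paper's own: the paper simply says ``Using the symmetry of the Kronecker coefficients with respect to the three partitions, we immediately have:'' before stating the corollary, which is exactly the reduction you spell out (reorder via $S_3$-symmetry of $g_{\mu,\nu,\lambda}$, note this passes to the stable limit, then substitute $a,b,c$ into Proposition~\ref{Stability2parts}). Your additional remarks on the chamber walls and the value $1$ on the boundary are a reasonable unpacking of the rest of the statement and match the intent of the paper.
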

 
We have obtained the counting function for the number of integer points in the  one-dimensional polytope of Figure~\ref{fig:zeroexample}, which was  studied in 
Example~\ref{ZeroExample}.

\subsection{The relative positions of the cones associated to the Kronecker, the reduced Kronecker and the Littlewood--Richardson coefficients}\label{RKC}

Identify a triple of partitions of lengths  $\le a, b, c$ (respectively)  with a point in $\mathbb{Q}^{a+b+c}$. 
 The set of triples of partitions whose
corresponding Kronecker coefficient is nonzero is known to have the structure of a finitely
generated semigroup (\cite{Christandl-PhD, Klyachko, Manivel:asymptotics1}). This semigroup
generates a rational polyhedral cone, called the  \emph{Kronecker cone}  and denoted by $PKron_{a,b,c}$.  
Its walls (i.e. facets) are described by a finite set of inequalities.
Can we find these inequalities? Is this cone saturated, or do there exist \emph{holes}, that is,
 points where the Kronecker coefficient is zero,  inside it? If so, where are the holes located? What is the relation between the Kronecker coefficients and other 
 important families of coefficients such as the Littlewood--Richardson coefficients or the reduced Kronecker coefficients.
 
We will use an unexpected discovery of Murnaghan to explore these issues: In the particular case where $\bar \lambda=\bar\mu+\bar\nu$ (that is, when Eq~\eqref{MurnaghanIn} is an equality), and when the first parts of the partitions are ``big enough", the Kronecker coefficient $g_{\lambda, \mu, \nu}$ coincides with the Littlewood-Richardson
coefficient $c^{\bar \lambda}_{\bar \mu, \bar \nu}$.  
Equivalently, Murnaghan's result can be expressed in terms of the reduced Kronecker coefficients,
$
 \bar g_{\bar \lambda, \bar  \mu, \bar  \nu}= c_{\bar \lambda, \bar  \mu, \bar  \nu}
$.

Given a family of coefficients indexed by triples of partitions, we define a cone (with the same name) as the polyhedral cone generated by its nonzero values. 
We ask for the relation between the positions of these different cones  (the Kronecker, the reduced Kronecker, and the Littlewood--Richardson coefficients) inside of the atomic cone.
 
The atomic Kronecker coefficients attain their minimum nonzero value (one) at its boundary:
 $
\mu_2+\nu_2-\lambda_2-\lambda_3-2\lambda_4=0
$ or $\nu_2-\lambda_3-\lambda_4=0$ (compare with inequality~\eqref{Bravyi}).   
Being the solution of a vector partition function problem, it follows that  when we dilate 
  the  three indexing partitions,  the values of the atomic Kronecker coefficients inside the cone are always  increasing. 

\begin{remark}\label{lambda1islarge}  On the face defined by 
$
\mu_2+\nu_2-\lambda_2-\lambda_3-2\lambda_4=0
$, the atomic Kronecker coefficient coincides with the Kronecker coefficient; see  Corollary~\ref{atomicisKron}.   Furthermore, they are always equal to one.
A triple of partitions $(\lambda, \mu, \nu)$ of the same weight is stable if $g_{(k\lambda, k\mu, k\nu)}$ equals 1 for all $k.$   
We have shown that all Kronecker coefficients corresponding to triples in this face are stable triples. 
 Stable triples are relevant because the sequences $g_{\alpha+n\lambda, \beta+n\mu, \gamma+\nu}$ stabilize for $n$ sufficiently large, see \cite{Stembridge}.
\end{remark}

 \begin{theorem}\label{LRfaceofKron}
 The Littlewood--Richardson cone coincides with  the intersection of the hyperplane $\lambda_4=0$ and the face   of the Kronecker cone defined by the first Bravyi inequality, 
$ 
  \lambda_2+\lambda_3+2\lambda_4 = \mu_2 + \nu_2  
 $.
 \end{theorem}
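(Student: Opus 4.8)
The plan is to reduce the statement to a short coefficient extraction using the exact formula of Theorem~\ref{7termF22Alternant}. First, by Proposition~\ref{nonzeroKron} the first Bravyi inequality $\lambda_2+\lambda_3+2\lambda_4\le\mu_2+\nu_2$ holds at every lattice point with $g_{\mu,\nu,\lambda}\neq 0$, hence on all of $PKron_{2,2,4}=\mathrm{cone}(\{g_{\mu,\nu,\lambda}\neq 0\})$; thus the hyperplane $H=\{\lambda_2+\lambda_3+2\lambda_4=\mu_2+\nu_2\}$ supports the cone and $F:=PKron_{2,2,4}\cap H$ is a face, and $\{\lambda_4=0\}$ supports $PKron_{2,2,4}$ as well. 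Using the elementary fact that $\mathrm{cone}(G)\cap H'=\mathrm{cone}(G\cap H')$ whenever $H'$ supports $\mathrm{cone}(G)$ (applied twice), I get
\[
F\cap\{\lambda_4=0\}=\mathrm{cone}(\Sigma),\qquad \Sigma=\{(\mu,\nu,\lambda):\ g_{\mu,\nu,\lambda}\neq 0,\ \lambda_4=0,\ \mu_2+\nu_2=\lambda_2+\lambda_3\}.
\]
On the slice $\lambda_4=0$ the equation $\mu_2+\nu_2=\lambda_2+\lambda_3$ is exactly the Murnaghan equality $|\bar\mu|+|\bar\nu|=|\bar\lambda|$ for $\bar\mu=(\mu_2)$, $\bar\nu=(\nu_2)$, $\bar\lambda=(\lambda_2,\lambda_3)$; and by Pieri's rule $c^{\bar\lambda}_{\bar\mu,\bar\nu}\neq 0$ precisely when $\lambda_2+\lambda_3=\mu_2+\nu_2$ and $0\le\lambda_3\le\min(\mu_2,\nu_2)$, in which case it equals $1$. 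So it suffices to prove that on this slice $g_{\mu,\nu,\lambda}\neq 0$ if and only if $\lambda_3\le\min(\mu_2,\nu_2)$; taking conical hulls then identifies $F\cap\{\lambda_4=0\}$ with the (appropriately embedded) Littlewood--Richardson cone.

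The core is then a direct computation with the polynomial $P_\lambda$. Fix a triple with $\lambda_4=0$ and $\mu_2+\nu_2=\lambda_2+\lambda_3$; since both $g$ and the Littlewood--Richardson condition are symmetric under $\mu\leftrightarrow\nu$, I may assume $\mu_2\ge\nu_2$, so that Theorem~\ref{7termF22Alternant} applies and writes $g_{\mu,\nu,\lambda}=[x^{\mu_2}y^{\nu_2}]\,P_\lambda(x,y)\bar F_{2,2}(x,y)$, where a monomial $y^bx^a$ of $P_\lambda$ contributes iff $b\le\nu_2$ and $a+b\le\mu_2+\nu_2=\lambda_2+\lambda_3$. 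Setting $\lambda_4=0$ in \eqref{explicitF22extraction} and inspecting the seven monomials: the leading monomial $y^{\lambda_3}x^{\lambda_2}$ has $a+b=\lambda_2+\lambda_3$ exactly, so it contributes iff $\lambda_3\le\nu_2$, and when it does its contribution is $p_S(\nu_2-\lambda_3,\mu_2+\nu_2-\lambda_2-\lambda_3)=p_S(\nu_2-\lambda_3,0)=1$; each of the five monomials involving $\lambda_1$ or the extra $+1$'s has $a+b>\lambda_2+\lambda_3$ (using only $\lambda_1\ge\lambda_2\ge\lambda_3\ge0$), and the remaining monomial $y^{\lambda_2+1}x^{\lambda_3-1}$ has $a+b=\lambda_2+\lambda_3$ but $b=\lambda_2+1>\nu_2$, for otherwise $\lambda_2\le\nu_2-1$ would give the impossible chain $\mu_2+\nu_2=\lambda_2+\lambda_3\le 2\lambda_2<2\nu_2\le\mu_2+\nu_2$. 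Hence $g_{\mu,\nu,\lambda}=1$ if $\lambda_3\le\nu_2=\min(\mu_2,\nu_2)$ and $g_{\mu,\nu,\lambda}=0$ otherwise, which is exactly $c^{(\lambda_2,\lambda_3)}_{(\mu_2),(\nu_2)}$ (note $\lambda_3\le\lambda_2$ is automatic once $\lambda_3\le\nu_2\le\mu_2$ and $\lambda_2+\lambda_3=\mu_2+\nu_2$). This finishes the identification of $\Sigma$ and hence of the cones.

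I expect the only real friction to be the bookkeeping in the last step: one must track the two possible orderings of $\lambda_2+\lambda_3$ versus $\lambda_1+\lambda_4$ (as already in the proof of Theorem~\ref{7termF22Alternant}) and check that each of the non-leading monomials really fails one of the two contribution conditions, and one must note that the normalization $\mu_2\ge\nu_2$ used to invoke $P_\lambda$ is harmless because the final equality is symmetric in $\mu$ and $\nu$. A minor additional point is to pin down precisely how the Littlewood--Richardson cone (in the variables $\bar\mu,\bar\nu,\bar\lambda$) is embedded into the ambient space of $PKron_{2,2,4}$, where the first parts $\mu_1,\nu_1,\lambda_1$ are determined only up to the common-weight constraint; but this is purely notational and does not affect the support computation above.
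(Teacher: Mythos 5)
Your proof is correct, but it follows a genuinely different route from the paper's. The paper proves the forward inclusion (LR cone $\subseteq$ wall) via Pieri's rule applied to $c^{\bar\lambda}_{\bar\mu,\bar\nu}$, and then, for the reverse inclusion, invokes Corollary~\ref{atomicisKron} to show the atomic coefficient equals the Kronecker coefficient on the wall and appeals to the stability bound of Theorem 1.5 in~\cite{BOR-JA} to conclude the Kronecker coefficient is reduced, hence equal to the LR coefficient by Murnaghan's theorem. You instead reduce the cone equality to a computation of supports via the standard fact that $\mathrm{cone}(G)\cap H'=\mathrm{cone}(G\cap H')$ when $H'$ supports $\mathrm{cone}(G)$ (applied to the first Bravyi hyperplane, via Proposition~\ref{nonzeroKron}, and then to $\{\lambda_4=0\}$), and then do a short case analysis on the seven monomials of $P_\lambda$ from Theorem~\ref{7termF22Alternant} restricted to the slice $\lambda_4=0$, $\mu_2+\nu_2=\lambda_2+\lambda_3$: five monomials have total degree exceeding $\mu_2+\nu_2$, the monomial $y^{\lambda_2+1}x^{\lambda_3-1}$ is ruled out since $\lambda_2+1>\nu_2$ always holds there (your contradiction argument is correct and uses only $\lambda_3\le\lambda_2$ and $\nu_2\le\mu_2$), and only the atomic monomial survives, giving $g_{\mu,\nu,\lambda}\in\{0,1\}$ with value $1$ exactly when $\lambda_3\le\min(\mu_2,\nu_2)$ — precisely the Pieri criterion for $c^{(\lambda_2,\lambda_3)}_{(\mu_2),(\nu_2)}\neq 0$. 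Your argument has the merit of being entirely self-contained within the paper's $P_\lambda$ machinery, avoiding the external stability input from~\cite{BOR-JA}, and of making the cone-theoretic reduction explicit rather than implicit; the paper's proof is shorter because it leverages results already in place.
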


\begin{proof}
Let $c_{\alpha, \beta}^\gamma=\bar g_{\alpha, \beta}^\gamma=g_{\lambda,\mu,\nu}$ with $\alpha=\bar \mu$,  $\beta=\bar \nu$  and $\gamma=\bar \lambda$. 
 We want to
see where $(\lambda,\mu,\nu)$ sits in relation with the Kronecker cone.

Suppose that  $c_{\alpha, \beta}^\gamma>0$.
Since $\alpha$ and $\beta$ have just one part, 
Pieri's rule tells us that the length of $\gamma$ can be at most two, and hence 
$\lambda_4=0$, and $ \lambda_2+\lambda_3 = \mu_2 + \nu_2  $.
On the other hand, using the inequalities described in Corollary~\ref{atomicisKron} we easily see that all atomic Kronecker coefficients in
this wall are indeed Kronecker coefficients.

Let $N$ be any number such that the triple of sequences $(N-\lambda_2,\lambda_2,\lambda_3),(N-\mu_2,\mu_2),(N-\nu_2,\nu_2)$ are partitions.
It remains to see whether  
$g_{(N-\lambda_2,\lambda_2,\lambda_3),(N-\mu_2,\mu_2),(N-\nu_2,\nu_2)}$ is reduced. 
For this, we will use the bound described in Theorem 1.5 of~\cite{BOR-JA}. It says that such a  Kronecker coefficient is stable as soon
as $N\ge\lambda_2+\mu_2+\nu_2+\left\lfloor \frac{\lambda_3}{2} \right\rfloor=2\lambda_2+\left\lfloor \frac{\lambda_3}{2} \right\rfloor$. But since $(N-\lambda_2,\lambda_2,\lambda_3)$
has to be a partition, the smallest possible value for $N$ will be that one that corresponds to partition $( \lambda_2,\lambda_2,\lambda_3)$. That is,  $2\lambda_2+\lambda_3 \ge 2\lambda_2+\left\lfloor \frac{\lambda_3}{2} \right\rfloor.$ 
   \end{proof}

We  ask for the position of those nonzero Littlewood--Richardson coefficients (inside the Kronecker cone) coming from the  identities:
$
 g_{\mu,\nu,\lambda} = \bar g_{\bar \mu,\bar \nu, \bar \lambda} =  c_{ \bar \mu,\bar  \nu}^{\bar  \lambda}.
$
 Now the Littlewood-Richardson coefficient $c^{(\lambda_2, \lambda_3)}_{(\mu_2), (\nu_2)}$ is nonzero iff the skew-shapes $(\lambda_2, \lambda_3)/(\mu_2)$ and $(\lambda_2, \lambda_3)/ (\nu_2)$ are horizontal strips, or equivalently iff 
 $ \lambda_2\geq \mu_2\geq \lambda_3 \text{ and } 
  \lambda_2\geq \nu_2\geq \lambda_3,$ 
 which in turn is equivalent to saying $\mu_2$ and $ \nu_2$ lie in the interval $[\lambda_3, \lambda_2].$ Hence, when the LR coefficient is nonzero,   we must have (since $\lambda_1\geq \lambda_2\geq \lambda_3\geq \lambda_4$),
 $$|\mu_2-\nu_2|\leq \lambda_2-\lambda_3\leq \min(\lambda_1-\lambda_3,  \lambda_2-\lambda_4).$$  We have recovered another result of Bravyi, his 
 third,  and last, inequality.

\begin{remark}
We have the following implications.  If a Kronecker coefficient is atomic then it is  reduced, the reason being that   the value of $\tilde g$ does not depend on the first part of $\lambda$.
On the other hand, if a Kronecker coefficient satisfies the equality of Murnaghan's condition, then it is reduced.
This is Theorem~\ref{LRfaceofKron}.\end{remark}

\section{The vector partition function $F_{n,m}$}
\label{sec-Fnm}
\label{nmKronecker}
Having completed our analysis of the $n=m=2$ case, we examine the extent to which these results generalize. First we show that we can make a variable substitution to convert
$\frac{{a_{\delta_n}[X]}{a_{\delta_m}[Y]}}{a_{\delta_{nm}}[XY]}$ into a vector partition function. This is the result of Theorem~\ref{thm:MAIN2}.

Let $S(p)$ denote the smallest monomial in $p$ with respect to the lexicographic order. Repeating the reasoning given for the case $n=m=2$ in Section 3, 
we obtain for the general situation 
the following identity:
\small{
\begin{align}\label{tildeKronecker}
\frac{{a_{\delta_n}[X]}{a_{\delta_m}[Y]}}{a_{\delta_{nm}}[XY]} S( a_{\lambda+\delta_{nm}}[XY])
&=  \sum_{\mu, \nu} \tilde g_{\mu,\nu,\lambda} \,  S(a_{\mu+\delta_n}[X] ) S( a_{\nu+\delta_m}[Y]) +\hot
\end{align}}
We proceed by first expanding all the Vandermonde determinants involved as a product of linear binomial factors. We want to factor the binomial terms so that we obtain a product of  terms of the form $(1-M)$ where $M$ is a Laurent monomial. We  can do this in such a way that the resulting Laurent series converges in a nonzero domain if we follow the lexicographic ordering, and always factor the smallest monomial in each binomial.  The argument is similar to the one for $F_{2,2},$ see the discussion following ~\eqref{F22series}.

We consider the special alphabets $X=\{1,x_1,x_2,\ldots ,x_{n-1}\},\quad$ $Y=\{1,y_1,y_2,\ldots, y_{m-1}\}, n,m\geq 2.$ Then $XY=\{1, x_i, y_j, x_iy_j:
1\leq i\leq n-1, 1\leq j\leq m-1\}.$
The set $XY$ is ordered as follows:
\begin{equation}\label{ordering}
1>x_i>x_{i+1}, y_j>y_{j+1}, x_i>y_j, x_i y_j>x_ky_\ell \text{ if }i< k,\text{ or } i=k \text{ and } j<l.
\end{equation}
The following claim is clear.
\begin{lemma}The smallest term  in $S( a_{\lambda+\delta_{nm}}[XY])$, with respect to the lexicographic ordering, is the product of the monomials in the main diagonal of the  matrix of the alternant $a_{\lambda+\delta_{nm}}[XY].$
\end{lemma}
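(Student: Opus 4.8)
The plan is to recall the structure of the alternant $a_{\lambda+\delta_{nm}}[XY]$ as a determinant and then identify its lexicographically smallest monomial by a greedy/exchange argument on permutations. Write $Z=XY=\{z_1,z_2,\dots,z_{nm}\}$, with the variables indexed \emph{in decreasing order} according to the ordering \eqref{ordering}: $z_1=1>z_2>\cdots>z_{nm}$. After reordering the parts of $\lambda+\delta_{nm}$ into a strictly decreasing sequence $\beta_1>\beta_2>\cdots>\beta_{nm}\ge 0$ (which changes $a_\lambda$ only by a sign), we have
\[
a_{\lambda+\delta_{nm}}[XY]=\det\!\left(z_i^{\beta_j}\right)_{1\le i,j\le nm}=\sum_{\sigma\in S_{nm}}\operatorname{sgn}(\sigma)\prod_{i=1}^{nm} z_i^{\beta_{\sigma(i)}}.
\]
The main-diagonal term is the one coming from $\sigma=\mathrm{id}$, namely $\prod_i z_i^{\beta_i}$, which pairs the largest variable with the largest exponent, the second-largest variable with the second-largest exponent, and so on. So the claim is precisely that this ``sorted'' monomial is the lex-minimum among the $(nm)!$ monomials appearing.

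First I would make precise what ``lexicographically smallest monomial'' means here: a monomial $\prod_i z_i^{c_i}$ is smaller than $\prod_i z_i^{d_i}$ if, reading exponents in the order $z_1,z_2,\dots$ (i.e. from the largest variable to the smallest), the first index $i$ at which $c_i\ne d_i$ has $c_i<d_i$. With this convention, to minimize the monomial we want the exponent of $z_1$ as small as possible; among those, the exponent of $z_2$ as small as possible; and so on. Since the exponents used are exactly the multiset $\{\beta_1>\cdots>\beta_{nm}\}$ and each is used once, assigning the \emph{smallest} available exponent to $z_1$ forces $\beta_{nm}$ there — wait, that would be the \emph{anti}-sorted assignment, so I need to be careful about which direction the paper's lex order runs. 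The key point to nail down is the direction convention in \eqref{ordering}: the paper orders $XY$ with $1$ \emph{largest}, and in the $n=m=2$ worked example the extracted diagonal monomial $(xy)^{\lambda_4}y^{\lambda_3+1}x^{\lambda_2+2}$ pairs the \emph{smallest} variable $xy$ with the smallest exponent and the largest variable-after-$1$ with the largest exponent, i.e. it is the sorted (co-monotone) pairing relative to ``small variable $\leftrightarrow$ small exponent''. So the correct statement is: among all permutation-monomials, the diagonal one is obtained by matching exponents and variables co-monotonically, and this is the unique lex-minimum. I would prove this by a standard exchange argument: if $\sigma\ne\mathrm{id}$, pick the first position $i$ (in the relevant reading order) where $\sigma$ disagrees with the co-monotone matching; there is an inversion, i.e. indices $i<i'$ with the ``wrong'' exponents assigned; swapping the exponents at $i$ and $i'$ strictly decreases the monomial in lex order (it lowers the exponent at the more significant position). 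Iterating, any $\sigma\ne\mathrm{id}$ yields a strictly lex-larger monomial, so $\mathrm{id}$ gives the minimum.

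A couple of subsidiary points need checking, none of them hard. (a) The monomials $\prod_i z_i^{\beta_{\sigma(i)}}$ for distinct $\sigma$ are genuinely distinct as monomials in the $z_i$ — true because the $\beta_j$ are pairwise distinct, so $\sigma$ is recoverable from the exponent vector; hence there is no cancellation that could remove the minimal term, and the ``smallest monomial'' is well defined. (b) After the reindexing the $z_i=x_iy_j$ are algebraically dependent (e.g. $z$ for $(x_1,y_2)$ times $z$ for $(x_2,y_1)$ equals $z$ for $(x_1,y_1)$ times $z$ for $(x_2,y_2)$), so ``monomial in the $z_i$'' and ``monomial in $x$'s and $y$'s'' are not in bijection; but the lemma and its use only concern the formal expansion of the determinant before any such simplification, matching the paper's phrase ``the monomials in the main diagonal of the matrix of the alternant,'' so I would state the lemma at that formal level and defer simplification. (c) The reordering of $\lambda+\delta_{nm}$ into strictly decreasing order only introduces a global sign and does not affect which monomial is smallest.

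\textbf{Main obstacle.} The only real subtlety is bookkeeping the direction of the lexicographic order consistently: the paper declares $1$ to be the \emph{largest} element of $XY$, yet when it ``extracts the smallest monomial'' it reads coefficients with $x$ prioritized and factors the smallest monomial out of each Vandermonde binomial $x_j-x_k$ as $x_k(x_j/x_k\cdot(\text{stuff})-1)$-type terms to get convergence for $0<|z_{nm}|<\cdots<|z_1|<1$. I would spend the bulk of the write-up pinning down, once and for all, that ``lex-smallest monomial of the determinant'' $=$ the co-monotone (sorted) variable/exponent pairing $=$ the product along the main diagonal, and then the exchange argument is one short paragraph. Everything else is routine.
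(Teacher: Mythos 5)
The paper offers no proof for this lemma — it just says ``The following claim is clear.'' — so your proposal is actually the more detailed treatment, and its core is right: the main diagonal of the alternant matrix $\bigl(z_i^{\mu_j}\bigr)$ is the co-monotone pairing of variables with exponents, and the rearrangement/exchange argument is the correct tool to show this pairing is extremal.

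That said, the direction ambiguity you flag is not merely a bookkeeping annoyance you could safely leave to ``the write-up''; it is the entire content of the lemma, and your proposal never actually resolves it. Under the most natural reading of ``lexicographic order'' induced by Eq.~\eqref{ordering} (compare by the exponent of the largest variable first, prefer smaller), the minimum among the $(nm)!$ signed monomials $\prod_i z_i^{\mu_{\sigma(i)}}$ is the \emph{anti}-diagonal, not the diagonal; and if one works in the $x,y$ variables and compares by the exponent of $x$ first, the $n=m=2$ computation shows the term coming from the transposition swapping $x$ and $y$ beats the diagonal (it has $x$-exponent $\lambda_3+\lambda_4+1$ versus the diagonal's $\lambda_2+\lambda_4+2$). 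What makes the diagonal ``smallest'' is a nonstandard convention — effectively comparing from the \emph{smallest} variable $x_{n-1}y_{m-1}$ upward and preferring smaller exponents there, or equivalently a weight order with increasing weights toward the small end of Eq.~\eqref{ordering} — and this is exactly what the subsequent change of variables \eqref{changeofvars} implements (the diagonal term is the one whose image in $s,t$ has componentwise-minimal exponents). The paper itself compounds the ambiguity by calling $S(\cdot)$ ``the leading monomial'' in Section~3 and ``the smallest monomial'' in Section~4. You were right to distrust your first guess; a complete write-up has to state the monomial order explicitly (or argue directly in the $s,t$ coordinates) before running the exchange argument, or the lemma as you've set it up proves the wrong permutation is extremal. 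Your observations (a) and (c) are fine, and (b) — that the $z_i$ are algebraically dependent, so the statement really lives at the level of the formal determinant expansion — is a genuinely useful clarification.
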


 Similarly, to compute the smallest term, with respect to the lex ordering, in each of the two remaining alternants $S( a_{\mu+\delta_{n}}[X])$ and $S( a_{\nu+\delta_{m}}[Y])$, we take the product of the monomials in the main diagonal of the corresponding  matrices.
We  obtain a Laurent series
\begin{align*}\label{Laurentseries}
 \sum_{\mu, \nu} g_{\mu,\nu,\lambda} \,  {\bf x}^{\,\,\,l_1(\mu,\nu,\lambda)} {\bf y}^{\,\,\,l_2(\mu,\nu,\lambda)} 
\end{align*}
where $l_1(\mu,\nu,\lambda)$ and $l_2(\mu,\nu,\lambda)$ are linear combinations of the parts of $\mu, \nu$ and $\lambda$. It is a product of binomial terms of the form 1 minus a Laurent monomial. Finally, we perform a change of basis which we describe in detail below, to ensure that we get a convergent Taylor series expansion.

For example, for $n=m=3$, the substitution is
$x_{1} = s_{1} t_{1}, $ 
$ x_{2} = s_{1} s_{2} t_{1}^{2}, $
$ y_{1} = s_{0} s_{1} s_{2} t_{1}^{2}, $ and 
$ y_{2} = s_{0} s_{1} s_{2} t_{1}^{3}$
More precisely, in order to guarantee convergence of our series, we assume in Eqn.~\eqref{ordering} that 
$1>|x_1|>|x_2|>..>|x_{n-1}|>|y_1|>|y_2|>...>|y_{m-1}|
 >|x_1y_1|>|x_1y_2|>...>|x_1y_{m-1}|\\
\ldots 
  >|x_{n-1}y_1|>|x_{n-1}y_2|>...>|x_{n-1}y_{m-1}|
$.

We define the rational function $G_{n,m}$  by
$G_{n,m}=\frac{{a_{\delta_n}[X]}{a_{\delta_m}[Y]}}{a_{\delta_{nm}}[XY]}.$
Observe that we have the Vandermonde expansion
$$a_{\delta_n}[X]=\prod_{i=1}^{n-1}(1-x_i)\prod_{1\le i <j\le n-1} (x_i-x_j),$$
and similarly for the second alternant $a_{\delta_m}[Y].$
For the alternant in the denominator we have 
$a_{\delta_{nm}}[XY]
=a_{\delta_n}[X]\cdot a_{\delta_m}[Y]\cdot A \cdot B\cdot C\cdot D\cdot E\cdot F,$
%
where 
$$A=\prod_{j=1}^{m-1}\prod_{i=1}^{n-1}(x_i-y_j),  \,\,
B=\prod_{i=1}^{n-1}\prod_{j=1}^{m-1}(1-x_iy_j)$$
and
$C= \prod_{i=1}^{n-1}\prod_{j=1}^{m-1}(x_i-x_i y_j)\cdot 
\prod_{j=1}^{m-1}\prod_{i=1}^{n-1}(y_j-x_i y_j)$
\[=\left(\prod_{j=1}^{m-1}(1-y_j)\right)^{n-1}(\prod_{i=1}^{n-1} x_i^{m-1})  
\cdot \left(\prod_{i=1}^{n-1}(1-x_i )\right)^{m-1} (\prod_{j=1}^{m-1}y_j^{n-1}), 
\]
\[D=\prod_{\stackrel {k=1}{k\neq i}}^{n-1}\prod_{i=1}^{n-1}\prod_{j=1}^{m-1} (x_k-x_iy_j)
\cdot \prod_{\stackrel {k=1}{k\neq j}}^{m-1}\prod_{j=1}^{m-1}\prod_{i=1}^{n-1} (y_k-x_iy_j),\qquad
E=\prod_{j\neq \ell=1}^{m-1}\prod_{1\le i<k\le n-1} (x_i y_j -x_k y_\ell),\]
\begin{multline*}
F= \prod_{j=1}^{m-1}\prod_{1\le i<k\le n-1} (x_i y_j -x_k y_j)
\cdot \prod_{i=1}^{n-1}\prod_{1\le j<\ell\le m-1} (x_i y_j -x_i y_\ell)\\
=\prod_{j=1}^{m-1}y_j^{n-1\choose 2}\prod_{1\le i<k\le n-1} (x_i  -x_k )^{m-1} 
\cdot \prod_{i=1}^{n-1}x_i^{m-1\choose 2}\prod_{1\le j<\ell\le m-1} ( y_j -y_\ell)^{n-1}
.\end{multline*}

It follows that the quotient of alternants $G_{n,m}$ simplifies to $\dfrac{1}{ABCDEF}.$

\smallskip

Note that each factor in $A, C, D, E, F$ can be rewritten in the form $(1-M)$ where $M$ is a \textit{Laurent} monomial in the $x_i$ and the $y_j.$ The factors of $B$ are already in this form.
For instance, in $E$ we can rewrite each factor as 
$x_iy_j-x_ky_\ell=x_iy_j(1-x_ky_\ell x_i^{-1} y_j^{-1}).$

Thus, the following definition for $\bar{F}_{n,m}(X,Y)$  makes sense.
\begin{definition}[$\bar{F}_{n,m}(X,Y)$ ]\label{defineFnm}

There are positive integers $a_i, b_j$ such that in 
the product 
 \[G_{n,m}  \prod_{i=1}^{n-1}x_i^{a_i} \prod_{i=j}^{m-1}y_j^{b_j}, \]
all  factors are of the form $(1-M)^{-1}$ where $M$ is a Laurent monomial in the $x_i$ and the $y_j.$ We define $\bar{F}_{n,m}(X,Y)$ to be this product, i.e. we have 
\begin{equation}\label{Laurentseries}
G_{n,m}\prod_{i=1}^{n-1}x_i^{a_i} \prod_{i=j}^{m-1}y_j^{b_j}
=\bar{F}_{n,m}.
\end{equation}
\end{definition}

 We now show that there is a different set of $(n+m-2)$ variables $s_i, i=0, \ldots n-1, t_j, j=1,\ldots, m-2,$ such that by effecting a judicious (and non-obvious)  change of variables, $\bar{F}_{n,m}(X,Y)$ becomes a product of factors of the form $(1-M)^{-1}$ where each Laurent monomial $M$ in $X,Y$ is a  monomial with nonnegative exponents in the new variables $S,T.$  In other words, $F_{n,m}(S,T)$ is a vector partition function in the new variables.
 
We claim that the quotients of consecutive terms in the 
sequence \eqref{ordering} become monomials (and not Laurent monomials), after setting, for each  $1 \le i \le n-1$ and $1 \le j \le m-1$,
 \begin{align} \label{changeofvars}
 \begin{cases}
&x_{i}=s_1 s_2.. s_{i} (t_1 t_2 ... t_{m-2})^{i}\\
&y_{j}= (s_0 s_1 ... s_{n-1})  (t_1 t_2 ... t_{m-2})^{n-1} t_1 t_2...t_{j-1} \\
\end{cases}.
         \end{align}
 Note when $n=m=2$, there are no  $t_i$ variables, and we recover the substitution for $F_{2,2}$ in Section~\ref{F22}.
 We have 
\begin{multline}\label{basisvectorsFnm}
\frac{x_{i+1}}{ x_i }= s_{i+1} t_1 t_2 ... t_{m-2}, \quad
 \frac{y_{j+1}}{ y_j} = t_j, \ 1\le j  \le m-2,\\  
 \frac{y_1}{x_{n-1}} = s_0,  \quad \frac{x_1 y_1  }{ y_{m-1}} = s_1,\ 
  \frac{x_i y_1  }{  x_{i-1}y_{m-1}} = s_i ,\   2\le i\le n-1.
 \end{multline}
This establishes our claim.  Hence we have proved: 
\begin{theorem} 
\label{changeofvariables}\label{thm:AlwaysVecPart}\label{thm:MAIN2}
Let $F_{n,m}(S,T)$ be the series obtained after performing the previous substitutions in the series~\eqref{Laurentseries}; its domain of convergence is $\{|s_i|<1, |t_j|<1: 0\le i\le n-1, 1\le j\le m-2\}.$  Then $F_{n,m}$ is a vector partition function.
\end{theorem}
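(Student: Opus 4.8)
The plan is to verify that after the substitution \eqref{changeofvars} every factor of $\bar F_{n,m}$ of the form $(1-M)^{-1}$, with $M$ a Laurent monomial in the $x_i,y_j$, becomes $(1-M')^{-1}$ with $M'$ a genuine monomial (nonnegative exponents) in the new variables $s_i,t_j$; since $F_{n,m}$ is a finite product of such factors, it is then by definition a vector partition function. The key reduction, carried out in \eqref{basisvectorsFnm}, is that the \emph{ratios of lexicographically consecutive terms} in the ordered list \eqref{ordering} are, after substitution, exactly the variables $s_0,\dots,s_{n-1}$ and $t_1,\dots,t_{m-2}$. So the first step is to record which Laurent monomials $M$ actually occur among the factors of $\bar F_{n,m}$ coming from $A,B,C,D,E,F$: by Definition~\ref{defineFnm} each such $M$ arises by factoring the smaller of the two terms out of a binomial $p-q$ where $p>q$ in the ordering \eqref{ordering}, so $M=q/p$ is a ratio of two elements of the ordered alphabet $\{1\}\cup\{x_i\}\cup\{y_j\}\cup\{x_iy_j\}$ (or of products thereof in $E,F$, where $M=x_ky_\ell\,x_i^{-1}y_j^{-1}$ with the numerator strictly smaller).

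The heart of the argument is the combinatorial observation that if $u>v$ are two elements of the ordered list \eqref{ordering}, then $v/u$ is a product of the consecutive ratios strictly between them, hence a product of $s$'s and $t$'s with nonnegative exponents. Concretely I would proceed as follows: first establish the ``consecutive-ratio'' identities \eqref{basisvectorsFnm} by direct substitution of \eqref{changeofvars} — this is the routine verification that $x_{i+1}/x_i = s_{i+1}t_1\cdots t_{m-2}$, $y_{j+1}/y_j=t_j$, $y_1/x_{n-1}=s_0$, and $x_iy_1/(x_{i-1}y_{m-1})=s_i$. Then note that the totally ordered list \eqref{ordering}, read from largest ($1$) to smallest, has the property that \emph{every} pairwise ratio $v/u$ with $u>v$ telescopes into a product of the intervening consecutive ratios; since each consecutive ratio is one of the $s_i$ or a monomial in the $t_j$ (the ratio $x_{i+1}/x_i$ contributing $t_1\cdots t_{m-2}$, the ratios within a block of $x_iy_j$'s contributing individual $t_j$'s, and the block-jumps contributing $s_i$'s), the telescoped product has nonnegative exponents. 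Applying this to each $M$ identified in step one — for $A$, $M=y_j/x_i$; for $C$, $M=y_j/x_i$ or $x_i/y_j$... — wait, I must be careful: in $C$ the factor $x_i-x_iy_j$ gives $M=y_j$ after factoring $x_i$, and $M=y_j$ is already a monomial in $s,t$ with nonnegative exponents by \eqref{changeofvars} itself, so those are immediate; the genuinely ratio-type factors are those in $A$ ($M=y_j/x_i$, and since $x_i>y_j$ this telescopes positively), $D$ ($M=x_iy_j/x_k$ or $x_iy_j/y_k$ with the numerator the smaller element), and $E$ ($M=x_ky_\ell/(x_iy_j)$ with $x_ky_\ell<x_iy_j$); in each case the numerator is lex-smaller than the denominator, so the telescoping applies.

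I would organize the write-up around a single lemma: \emph{for any two elements $u,v$ of the list \eqref{ordering} with $u\ge v$, the ratio $v/u$ equals a monomial (nonnegative exponents) in $s_0,\dots,s_{n-1},t_1,\dots,t_{m-2}$} — proved by the telescoping argument above — and then observe that every binomial factor appearing in $A,B,C,D,E,F$, after extracting the appropriate power of $x_i$'s and $y_j$'s as in Definition~\ref{defineFnm}, is of the form $1-(v/u)$ with $u\ge v$ both products of at most two alphabet elements, so the lemma (together with the fact that $x_i$ and $y_j$ are themselves such monomials, handling the ``diagonal'' factors from $C$ and $F$) shows $F_{n,m}$ is a product of $(1-M')^{-1}$ with $M'$ honest monomials. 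That this is the defining property of a vector partition function was recorded in Section~\ref{VectorPartition}, and the domain-of-convergence claim $\{|s_i|<1,|t_j|<1\}$ follows because the substitution \eqref{changeofvars} sends this region into the region $1>|x_1|>\cdots>|x_{n-1}|>|y_1|>\cdots$ where the Laurent expansion was defined. The main obstacle is purely bookkeeping: checking that the numerator is indeed the lex-smaller element in \emph{every} factor of $D$ and $E$ (where products of alphabet elements, not single elements, are compared), i.e.\ that the factorizations chosen in Definition~\ref{defineFnm} are compatible with the ordering \eqref{ordering}; once that compatibility is in hand, the telescoping lemma does all the real work and there is no analytic or algebraic subtlety remaining.
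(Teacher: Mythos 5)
Your proposal is correct and takes essentially the same approach as the paper: both rely on the fact, recorded in Eqn.~\eqref{basisvectorsFnm}, that the ratios of lexicographically consecutive terms of the ordered alphabet become single new variables (or honest monomials) after the substitution~\eqref{changeofvars}, from which every binomial factor of $\bar F_{n,m}$ yields a factor $(1-M')^{-1}$ with $M'$ a nonnegative-exponent monomial. The only difference is one of exposition: the paper states the consecutive-ratio computation and immediately declares the claim established, leaving the telescoping step implicit, whereas you isolate the telescoping observation (every ratio $v/u$ with $u\ge v$ in the ordering~\eqref{ordering} is a product of intervening consecutive ratios) as an explicit lemma and then run the bookkeeping through the products $A,B,C,D,E,F$ — which is a more careful write-up of the same argument, not a different one.
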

From the preceding discussion we can also conclude:
\begin{corollary}\label{matrixFnm}  Let $A_{n,m}$ be the matrix associated to the vector partition function $F_{n,m}$, as
in Section~\ref{sec-Polytopes}. Then  
\begin{enumerate}
\item the largest entry is $2n-1;$
\item the number of columns  is $\binom{nm}{2}-\binom{n}{2}-\binom{m}{2}$;
\item the number of rows is $m+n-2;$
\item all the basis vectors appear in the columns of $A_{n,m};$
\item the rank of the matrix is $m+n-2.$
\end{enumerate}
\end{corollary}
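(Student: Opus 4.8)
# Proof proposal for Corollary~\ref{matrixFnm}

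The plan is to read off each of the five assertions directly from the explicit factorizations of the alternants carried out in the construction of $G_{n,m}$ and $\bar F_{n,m}$, together with the change of variables~\eqref{changeofvars}. I would organize the proof around the list of factors $A,B,C,D,E,F$ produced in the denominator of $G_{n,m}$ (after the Vandermonde cancellations), since each factor of the form $(1-M)^{-1}$ contributes one column to $A_{n,m}$, and the exponent vector of $M$ in the new variables $S,T$ is that column.

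\textbf{Columns (item 2).} First I would count the factors that actually survive after the Vandermonde determinants $a_{\delta_n}[X]$ and $a_{\delta_m}[Y]$ are cancelled. By construction, $a_{\delta_{nm}}[XY]$ splits as $a_{\delta_n}[X]\cdot a_{\delta_m}[Y]\cdot ABCDEF$; the total number of linear binomial factors of $a_{\delta_{nm}}[XY]$ is $\binom{nm}{2}$ (one for each pair of elements of the $nm$-element alphabet $XY$), while $a_{\delta_n}[X]$ and $a_{\delta_m}[Y]$ contribute $\binom{n}{2}$ and $\binom{m}{2}$ respectively. Hence $ABCDEF$ has $\binom{nm}{2}-\binom{n}{2}-\binom{m}{2}$ factors; each becomes a $(1-M)^{-1}$ after pulling out the leading monomial (Definition~\ref{defineFnm}), and each such factor is one generator of $F_{n,m}$, i.e.\ one column of $A_{n,m}$. (I should double-check that no two of these factors collapse to the same monomial in $S,T$; the intended reading is that they are counted with multiplicity, so the claim is about the number of columns as written, matching Section~\ref{VectorPartition}.)

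\textbf{Rows and rank (items 3 and 5).} The new variables are $s_0,\dots,s_{n-1}$ and $t_1,\dots,t_{m-2}$, which is $n+(m-1)=n+m-1$\,? No: $s_i$ for $0\le i\le n-1$ gives $n$ variables and $t_j$ for $1\le j\le m-2$ gives $m-2$, for a total of $m+n-2$; this is the number of rows. For the rank, I would invoke item 4: the standard basis vectors of $\mathbb Q^{m+n-2}$ all occur among the columns (shown next), so the column span is all of $\mathbb Q^{m+n-2}$ and the rank is $m+n-2$.

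\textbf{Basis vectors (item 4).} This is exactly the content of the displayed identities~\eqref{basisvectorsFnm}: the quotients of lexicographically consecutive elements of the ordered alphabet $XY$ are precisely $s_1,\dots,s_{n-1}$, $s_0$, and $t_1,\dots,t_{m-2}$ — each a single new variable — and these quotients occur as the monomials $M$ in certain factors of $ABCDEF$ (the ``consecutive-difference'' factors). I would match each $s_i$ and $t_j$ to the specific factor (e.g.\ $s_0$ comes from the factor $y_1-x_{n-1}y_1$ in $C$ after normalization, $t_j$ from $y_j-y_{j+1}$ inside the Vandermonde-type factors contributing to the relevant block, etc.), thereby exhibiting every standard basis vector as a column. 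Some care is needed because $s_0$ does not itself appear as a ratio of two elements of the alphabet in the most obvious way — it is $y_1/x_{n-1}$ — so I want to confirm that the factor $(x_{n-1}-y_1)$ (a factor of $A$) normalizes to $(1-y_1 x_{n-1}^{-1})=(1-s_0)$.

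\textbf{Largest entry (item 1).} Finally, for the bound $2n-1$ on the entries of $A_{n,m}$: every column is the exponent vector (in the $s_i,t_j$) of some Laurent monomial $M$ that is a ratio $m_1/m_2$ of two among $\{1,x_i,y_j,x_iy_j\}$, or a normalized difference $x_iy_j - x_ky_\ell$, etc. Substituting~\eqref{changeofvars}, each of $x_i,y_j,x_iy_j$ has $s$-exponents that are $0$ or $1$ and $t$-exponents bounded by roughly $2(m-2)$ from the $x_i$ term and $(n-1)(m-2)+(m-2)$ from the $y_j$ term; I expect the $s$-exponents of any surviving $M$ to lie in $\{-1,0,1\}$ (so contributing at most $1$), and the largest coordinate overall to come from a $t$-exponent, with the worst case an $x_{n-1}y_{\ell}$-type monomial giving exponent $2n-1$ in $t_1\cdots t_{m-2}$. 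The cleanest route is: in $M = $ (leading monomial normalization of a factor), write $M$ as a product/quotient of the basis ratios~\eqref{basisvectorsFnm}, and bound the number of such basis-ratio factors needed; since the alphabet has $nm$ elements arranged in the linear order~\eqref{ordering}, any ratio of two of them is a product of at most $nm-1$ consecutive basis ratios, but a finer analysis using the block structure ($x$-block, $y$-block, $xy$-block) should bring the $t_1\cdots t_{m-2}$-exponent down to $2n-1$.

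\textbf{Main obstacle.} The routine parts are items 2, 3, 5 (pure bookkeeping of factor counts and variable counts). The genuine work is item 1, the sharp bound $2n-1$: it requires tracking, across all six families $A,B,C,D,E,F$, exactly which Laurent monomial each factor normalizes to and what its coordinates are in the $(S,T)$ coordinates after~\eqref{changeofvars}. I would handle this by treating the three ``blocks'' of $XY$ (the $x_i$'s, the $y_j$'s, the $x_iy_j$'s) separately and observing that the substitution assigns to $x_i$ a $t$-degree of $i(m-2)$ and to $y_j$ a $t$-degree of $(n-1)(m-2)+(j-1)$, so that a difference $x_iy_j - x_ky_\ell$ with $i<k$ normalizes to a monomial whose $t_1\cdots t_{m-2}$-exponent is $(k-i)(m-2) + (\ell - j)$, which is at most $(n-2)(m-2)+(m-2) = (n-1)(m-2)$ — so in fact the largest entry is governed by the $s$-coordinates of the $y_j$-related factors, where the relevant monomial is $s_0 s_1\cdots s_{n-1}$ appearing with exponent contributions that stack up to $2n-1$ in the worst factor of $C$ or $D$. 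Pinning down precisely which factor attains $2n-1$ is the crux, and I would verify it against the $n=m=3$ example given just before the corollary, where $2n-1=5$ and the substitution $y_2 = s_0s_1s_2 t_1^3$ indeed has $t_1$-exponent $3 = 2n-3$ and total $s$-exponent $3$, consistent with a maximal coordinate of $5$ arising in a product like $x_2 y_2 / y_1$ or similar.
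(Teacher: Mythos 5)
Your treatment of items (2)--(5) is correct and coincides with the paper's: the column count is the surplus $\binom{nm}{2}-\binom{n}{2}-\binom{m}{2}$ of Vandermonde factors after cancellation, the row count is the number of new variables $s_0,\dots,s_{n-1},t_1,\dots,t_{m-2}$, and the rank follows from item~(4) via Eq.~\eqref{basisvectorsFnm}, all exactly as in the paper. Your parenthetical worry about repeated columns is well placed but harmless, since the paper counts columns with multiplicity (as one sees in the displayed $A_{3,3}$).

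Item~(1), however, has a genuine gap, and the sketch you give would not close it. You conflate the \emph{total} $t$-degree (the sum of exponents over $t_1,\dots,t_{m-2}$, which for $x_i$ is $i(m-2)$) with the exponent of an \emph{individual} variable $t_k$, which is what a matrix entry records; bounding the former tells you nothing useful about the latter. You also contradict yourself: first you guess the maximal coordinate comes from a $t$-exponent, then you conclude it is ``governed by the $s$-coordinates,'' and you claim the $s$-exponents lie in $\{-1,0,1\}$. Neither is right. After the substitution~\eqref{changeofvars}, a factor $(1-x_iy_j)$ from the block $B$ has $s_k$-exponent equal to $2$ for $1\le k\le i$ (one copy from $x_i$, one from $y_j$), so $s$-exponents already reach $2$, and they never exceed $2$; the extremal entry lives in the $t$-rows. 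The paper's argument is short: for the $B$-factor $(1-x_iy_j)$ the exponent of $t_k$ with $k\le j-1$ is $i+(n-1)+1=n+i$, maximized at $i=n-1$ to give $2n-1$, while every other family $A,C,D,E,F$ produces monomials obtained from such products by \emph{dividing} by some $x_i$ or $y_j$, which can only lower individual exponents. Your proposed verifying monomial $x_2y_2/y_1$ for $n=m=3$ has $t_1$-exponent $3$, not $5$; the correct witness is simply $x_2y_2 = s_0s_1^2s_2^2t_1^5$, whose factor $(1-x_2y_2)$ contributes the last column of the displayed $A_{3,3}$.
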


\begin{proof} 
The largest entry is obtained by examining the largest possible exponent  of the variables $s_i$ or $t_j$ in the monomials $M$ occurring in the factors $(1-M)$ of $F_{n,m}.$ We have, from the product $B$ of the preceding proof, for $ i\le n-2, j\le m-1,$ the monomial 
\small{
\[ x_i y_j=(s_1\ldots s_i)(t_1t_2\ldots t_{m-2})^i\cdot (s_0s_1\ldots s_{n-1})
(t_1t_2\ldots t_{m-2})^{n-1}(t_1t_2\ldots t_{j-1})\]}
and clearly the largest exponent here occurs for each of $t_1,\ldots , t_{j-1},$ and it equals $i+(n-1)+1=n+i\le 2n-1.$  The maximum exponent $2n-1$ occurs in the monomial $x_{n-1} y_j.$
Examining the products other than $B,$ we see that all other monomials involve dividing by $x_i$ or $y_j$ or both, so it is clear that they cannot yield a larger exponent.

The number of columns in the matrix $A_{n,m}$ equals the number  of linear factors in $a_{\delta_{nm}}[XY]$ minus the number of linear factors in $a_{\delta_n}[X]$ minus the number of linear 
factors in $a_{\delta_m}[Y];$ since these are all Vandermonde determinants, the second result follows.  For the third  result, observe that the number of rows is simply the number of variables in the set $\{s_i, 0\le i\le n-1, t_j, 1\le j\le m-2\}.$ 

For the last two statements, observe that Eqn.\eqref{basisvectorsFnm} in the preceding proof establishes that all the basis vectors appear as columns of the matrix  $A_{n,m}$, since all the variables $s_i$ and $t_j$ occur as quotients when converting the factors of the Vandermonde in the products $A$-$F$ into the form $(1-M)$ in Eq.~\eqref{basisvectorsFnm}. Hence the rank is the number of rows of the matrix.  \end{proof}

\subsection{The degree of the Kronecker quasipolynomial}
\label{sec:DegreeandPeriod}

We have shown that the Kronecker function $\kappa_{n,m,nm}$ is a quasipolynomial on affine domains. However, a deep theorem of Meinrenken and Sjamaar \cite{qr0} says that $\kappa_{n,m,nm}$ is in fact a piecewise quasipolynomial. This result seems to be unattainable by our methods.
However, we immediately obtain information about the 
degree  of the Kronecker quasipolynomial $\kappa_{n,m,nm}$. Let $X$ be an alphabet of size $n$, and $Y$ an alphabet of size $m$, and let 
\begin{align*}
d&=\frac{n^2m^2}{2}-\frac{n^2}{2}-\frac{m^2}{2}-\frac{nm}{2}-\frac{n}{2}-\frac{m}{2}+2
\end{align*}

\begin{theorem}\label{thm:DegreeandPeriod}
The degree of the piecewise quasipolynomial Kronecker function $\kappa_{n,m,nm}$ is always $\le d$. 
\end{theorem}
\begin{proof} The degree of  $\kappa_{n,m,nm}$ is bounded by 
the dimension of the null space of $A_{n,m}.$ 
It is thus equal to the number of columns minus the rank.  By Corollary~\ref{matrixFnm}, this is just $d.$ 

Since Kronecker coefficients are linear combinations of different shifts of this vector partition function, these bounds apply in general.
\end{proof}

The degree of  $\kappa_{n,m,nm}$ has  been obtained by  Baldoni, Vergne, and Walter~\cite{BaldoniVergneWalter, VergneWalter} using the language of moment maps.

In addition to being completely elementary, another advantage of our approach is that the dimension of the polyhedral cones and their ambient spaces
 involved in the calculation are the minimal possible ones, as they
coincide with the degree of the quasipolynomial.

\begin{example}
The domain of convergence of the vector partition function $F_{2,3}$ is $|x_1y_2|<|x_1y_1|<|y_2|<|y_1|<|x_1|<1.$  $F_{2,3}$ counts  nonnegative integer solutions to $A_{2,3}{\bf x}={\bf n}$, with $A_{2,3}$ equal to 
\[
\left(\begin{array}{rrrrrrrrrrrr}
1 & 0 & 0 & 1 & 0 & 0 & 1 & 0 & 1 & 1 & 1  \\
0 & 1 & 0 & 0 & 1 & 1 & 1 & 1 & 1 & 2 & 2  \\
0 & 0 & 1 & 1 & 1 & 1 & 1 & 2 & 2 & 2 & 3 
\end{array}\right)
\]
\end{example}
The dimension of the solution space is rather large. The polytopes involved have dimension $8,$ making them very hard to visualize.
However, some interesting phenomena can be observed by  looking at the restriction of this system of equations to the positive orthant.
 Recall that we are looking for nonnegative solutions to $A_{2,3}{\bf x}={\bf n}$. Let ${\bf n}=(n_1, n_2, n_3)$.

If $n_3=0$ , since we are only considering nonnegative linear combinations of the columns of the matrix, none of the columns other than the first two can appear. We obtain the restricted matrix \small{$A_3=\left(\begin{array}{rr} 1 & 0 \\ 0 & 1 \end{array}\right)$}, and $p_{A_3}(n_1,n_2)=1$ is a constant polynomial.  Here we use the notation of  Section~\ref{sec-Polytopes}  
 for the quasipolynomial $p_A(\bb) $ associated to the polytope defined by the  solution space of the matrix equation $A{\bf x}={\bb}.$ 

On the other hand, if $n_2=0$, we can discard any column where the second entry is not zero. In this case the restricted matrix 
is $A_2=\left(\begin{array}{rrr} 1 & 0 &1 \\ 0 & 1 &1 \end{array}\right)$, and $p_{A_2}(n_1,n_3)$ is a linear  polynomial: We need to solve  the system of inequalities $x_3 \le n_1, x_3 \le n_3$.  Hence $p_{A_2}(n_1,n_3)=1+\min(n_1, n_3).$

Finally, if  $n_1=0$, the restricted matrix is   \small{$A_1=\left(\begin{array}{rrrrr}1 & 0 & 1 &1 &1 \\ 0& 1 & 1 & 1 & 2 \end{array}\right)$,} and $p_{A_1}(n_2,n_3)$ is a  cubic  quasipolynomial.

Note that  the atomic Kronecker coefficients are identically one {\em only} on the facet defined by  $n_3=0 $. Contrast this with the situation for $F_{2,2}$, where the coefficients are identically one on {\em both} facets: from Figure~\ref{fig:chambers}, we see that 
  $p_{A_{2,2}}(n,m)=1$ if $n=0$ or $m=0.$  

\section{Appendix: Lemmas for the proof of Theorem~\ref{thm:AtomicIsMax}}

We now establish the technical lemmas needed on the monotonic behaviour of the  function $p_S(n,m)$, in order to prove 
Theorem~\ref{thm:AtomicIsMax}.    For brevity, 
throughout these arguments, we will write $c(m)$ for the expression $\frac{7}{8}+\frac{(-1)^m}{8}.$  Note that $c(m)\leq 1$ for all $m.$  
 
 \begin{lemma}\label{pSboundBinomCoeff} The partition function $p_S(n,m)$ satisfies 
\[p_S(n,m)\le p_S(n,m')=\binom{n+2}{2} \text{ whenever }m'\geq 2n.\]
\end{lemma}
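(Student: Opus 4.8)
The plan is to use the explicit chamber description of $p_S(n,m)$ recorded in Figure~\ref{fig:chambers}. Recall that $p_S(n,m)$ counts nonnegative integer points in the polytope defined by $x_3+x_4\le n$ and $x_3+2x_4\le m$, equivalently the number of pairs $(s_0,s_1)$ of nonnegative integers with $s_0+s_1\le n$ and $s_0+2s_1\le m$ (after identifying $x_3=s_1$, $x_4=s_0$ as in Section~\ref{F22}). The key structural fact is monotonicity in the second argument: increasing $m$ only relaxes the constraint $x_3+2x_4\le m$, so no solution is ever lost. Hence $p_S(n,m)\le p_S(n,m')$ whenever $m\le m'$. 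Since $m'\ge 2n\ge m$ is not assumed, I would instead argue directly: for \emph{any} $m'\ge 2n$, the inequality $x_3+2x_4\le m'$ is implied by $x_3+x_4\le n$ together with $x_4\le n$ (indeed $x_3+2x_4\le 2(x_3+x_4)\le 2n\le m'$), so in Region~II (where $2n\le m'$) the second defining inequality is redundant and $p_S(n,m')$ counts exactly the points of the standard $2$-simplex $\{x_3,x_4\ge 0,\ x_3+x_4\le n\}$, which is $\binom{n+2}{2}$ by Example~\ref{SecondExample}. This establishes the equality $p_S(n,m')=\binom{n+2}{2}$ for $m'\ge 2n$.

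For the inequality $p_S(n,m)\le\binom{n+2}{2}$, I would observe that the polytope defining $p_S(n,m)$ is always contained in the standard $2$-simplex $\{x_3,x_4\ge 0,\ x_3+x_4\le n\}$, since the first defining inequality $x_3+x_4\le n$ is one of its constraints regardless of $m$. Therefore every lattice point counted by $p_S(n,m)$ is among the $\binom{n+2}{2}$ lattice points of that simplex, giving $p_S(n,m)\le\binom{n+2}{2}=p_S(n,m')$ for all $m$ and all $m'\ge 2n$. This completes the proof.

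There is essentially no obstacle here; the only point requiring a little care is matching the variable names between the two presentations (the $(x_3,x_4)$ of Example~\ref{ThirdExample} versus the $(s_1,s_0)$ of the substitution in Section~\ref{F22}), and noting that the containment argument uses only the first of the two defining inequalities and so is valid in all three chambers I, II, III of Figure~\ref{fig:chambers}. One can double-check consistency against the closed forms in Figure~\ref{fig:chambers}: in Region~I the value is $\tfrac{m^2}{4}+m+c(m)$ with $m\le n$, which is at most $\tfrac{n^2}{4}+n+1\le\binom{n+2}{2}$, and in Region~III a similar bounded comparison holds, but the clean containment argument makes these case checks unnecessary.
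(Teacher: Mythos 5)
Your argument is correct, and it is cleaner and more unified than the paper's. The paper proves the same lemma by splitting into three cases depending on which chamber of Figure~\ref{fig:chambers} the pair $(n,m)$ falls into: $n\le m/2$ (Region II, immediate equality), $m/2<n<m$ (Region III, compared to Region II by ``inspecting the third figure in Figure~\ref{fig:polytopes}''), and $n\ge m$ (Region I, same kind of geometric inspection). Each of the latter two cases appeals to a lattice-point containment argument without fully unifying them. Your observation --- that the polytope $\{x_3,x_4\ge 0,\ x_3+x_4\le n,\ x_3+2x_4\le m\}$ is always contained in the simplex $\{x_3,x_4\ge 0,\ x_3+x_4\le n\}$, because adding a constraint can only shrink the feasible region --- collapses all three cases into a single line and requires no appeal to the explicit quasipolynomial formulas at all. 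The redundancy argument ($x_3+2x_4\le 2(x_3+x_4)\le 2n\le m'$) correctly establishes the equality for $m'\ge 2n$, matching Example~\ref{SecondExample}. This is a genuinely simpler proof of the same statement; the only thing the paper's case analysis ``buys'' is that it makes explicit which chamber formula governs each regime, information that is used elsewhere in the Appendix, but that is not needed for this lemma.
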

\begin{proof} We have three cases.

\noindent
\textbf{Case 1:} Suppose $n\in[0, \frac{m}{2}].$ Then we claim that $p_S(n,m)=\binom{n+2}{2}=p_S(n, m')$ for all $m'\ge 2n.$ 
This is just a consequence of the definition.

\noindent
\textbf{Case 2:}
 Suppose $n\in(\frac{m}{2},m).$  We must show that $p_S(n,m)\le
p_S(n,m')$ for all $m'\ge 2n.$ 

  From Figure~\ref{fig:chambers}, when $\frac{m}{2}\leq n<m,$ $p_S(n,m)$ is given by the formula for Region III, while $p_S(n,m')$ is given by the binomial coefficient $\binom{n+2}{2}.$  Inspecting the third figure in Figure~\ref{fig:polytopes}, and using the fact that the $p_S(n,m)$ count lattice points in the appropriate regions, it is immediate that the difference $p_S(n,m')-p_S(n,m)$ is nonnegative for $n$ in this interval and $m'\geq 2n.$

\noindent
\textbf{Case 3:}
Suppose $n\ge m\ge 0.$  We must show that $p_S(n,m)\le \binom{n+2}{2}=p_S(n,m')$ for all $m'\ge 2n.$ Again this is immediate by the same geometric argument, inspecting the first and third figures in Figure~\ref{fig:polytopes}.

\end{proof}

\begin{lemma}\label{pSRegionIII2nd} Suppose  $\frac{m}{2}<\frac{M}{2}<n<m<M.$ Then $p_S(n,M)-p_S(n,m)\geq 0.$
\end{lemma}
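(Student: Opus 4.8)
The plan is to note that the hypotheses $\tfrac m2<\tfrac M2<n<m<M$ are equivalent to the chain $n<m<M<2n$, which places both $(n,m)$ and $(n,M)$ in Region~III of the chamber complex of Figure~\ref{fig:chambers}. From there I see two equally short routes, and I would present the combinatorial one since it needs no case analysis.

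By Example~\ref{ThirdExample} and Figure~\ref{fig:polytopes}, for any $t\in\NN$ the value $p_S(n,t)$ is the number of integer points of the polytope $\mathcal{P}_{n,t}=\{(x_3,x_4)\in\RR^2_{\ge 0}:x_3+x_4\le n,\ x_3+2x_4\le t\}$. Since $m\le M$, the half-planes satisfy $\{x_3+2x_4\le m\}\subseteq\{x_3+2x_4\le M\}$, hence $\mathcal{P}_{n,m}\subseteq\mathcal{P}_{n,M}$, and therefore
\[p_S(n,m)=|\ZZ^2\cap\mathcal{P}_{n,m}|\le|\ZZ^2\cap\mathcal{P}_{n,M}|=p_S(n,M).\]
This already yields $p_S(n,M)-p_S(n,m)\ge 0$; in fact it shows $t\mapsto p_S(n,t)$ is weakly increasing for every fixed $n$, the same monotonicity exploited in the proof of Lemma~\ref{pSboundBinomCoeff}, so nothing beyond $m\le M$ is truly needed.

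Should one prefer to argue at the level of the quasipolynomials (as the placement of the lemma in Region~III suggests), I would substitute the Region~III constituent of Figure~\ref{fig:chambers}, write $c(t)=\tfrac78+\tfrac{(-1)^t}{8}$, and simplify to
\[p_S(n,M)-p_S(n,m)=(M-m)\Bigl(n-\tfrac{M+m}{4}+\tfrac12\Bigr)+\bigl(c(M)-c(m)\bigr).\]
Here $m,M<2n$ forces $n-\tfrac{M+m}{4}>0$, so the first term is at least $\tfrac12(M-m)\ge\tfrac12$ (using $M>m$ with $m,M\in\NN$), while $c(M)-c(m)\ge-\tfrac14$; hence the difference exceeds $\tfrac14$, and being an integer it is at least $1$.

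I do not expect a genuine obstacle here: this is an elementary monotonicity fact. The only point requiring a moment's care is checking that the hypotheses really do confine both points to the single chamber Region~III, so that the same constituent formula governs both values — and this is exactly the chain $n<m<M<2n$. The polytope-inclusion argument bypasses even that concern.
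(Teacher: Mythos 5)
Your proposal is correct and takes essentially the same approach as the paper: the paper's proof also observes that both values count lattice points in a convex polytope and are therefore increasing in each argument, which is precisely your polytope-inclusion argument. Your second, algebraic route via the Region~III constituent is also sound (and in fact matches a computation left commented out in the paper's source), but is unnecessary once the inclusion $\mathcal{P}_{n,m}\subseteq\mathcal{P}_{n,M}$ is noted.
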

\begin{proof} Both partition functions are computed according to the formula for Region III in Figure~\ref{fig:chambers}, and hence count lattice points in a convex polytope (see Figure~\ref{fig:polytopes}).  They are therefore increasing functions in each argument.
\end{proof}

\begin{lemma}\label{pSRegionIII}  Fix $k\geq 0.$ Then  $p_S(n, n+k),$ 
for $0\leq k\leq n,$ is an increasing function of $n.$ 
\end{lemma}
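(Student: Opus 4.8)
The plan is to read the statement off the polyhedral description of $p_S$ established in Section~\ref{F22}. Recall from Example~\ref{ThirdExample} that $p_S(n,m)$ equals the number of lattice points $(x_3,x_4)\in\ZZ_{\ge 0}^2$ satisfying the two inequalities in~\eqref{polytope22}, namely $x_3+x_4\le n$ and $x_3+2x_4\le m$; write $P_{n,m}$ for this polytope.

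The first step is the trivial monotonicity of this family of polytopes: if $n\le n'$ and $m\le m'$, then each defining inequality of $P_{n,m}$ is implied by the corresponding inequality of $P_{n',m'}$, so $P_{n,m}\subseteq P_{n',m'}$ and hence $p_S(n,m)\le p_S(n',m')$. Taking $m=n+k$ and passing from $n$ to $n+1$ increases \emph{both} arguments by one, which already shows that $n\mapsto p_S(n,n+k)$ is non-decreasing for every $k\ge 0$. The second step upgrades this to strict monotonicity by exhibiting one new lattice point: the point $(x_3,x_4)=(n+1,0)$ satisfies $x_3+x_4=n+1\le n+1$ and $x_3+2x_4=n+1\le n+1+k$ (here we use $k\ge 0$), so it lies in $P_{n+1,\,n+1+k}$, while $x_3+x_4=n+1>n$ shows it is not in $P_{n,\,n+k}$. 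Combined with the inclusion $P_{n,\,n+k}\subseteq P_{n+1,\,n+1+k}$ from the previous paragraph, this gives $p_S(n+1,n+1+k)\ge p_S(n,n+k)+1$, which is the claim.

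There is no real obstacle here; the only point worth flagging is that the argument is purely set-theoretic and uses neither the upper bound $k\le n$ nor the Region~III formula of Figure~\ref{fig:chambers} (the hypothesis $0\le k\le n$ merely records that $p_S(n,n+k)$ is then computed by that single constituent, which is how the lemma will be invoked in the sequel). If one instead prefers an explicit computation, substituting $m=n+k$ into the Region~III formula gives $p_S(n,n+k)=\tfrac{n^2}{4}+\tfrac{nk}{2}-\tfrac{k^2}{4}+n+\tfrac{k}{2}+c(n+k)$, and its increment from $n$ to $n+1$ equals $\tfrac{2n+1}{4}+\tfrac{k}{2}+1+\bigl(c(n+1+k)-c(n+k)\bigr)\ge 1$, since $\tfrac{2n+1}{4}\ge\tfrac14$ and $|c(m)-c(m')|\le\tfrac14$; but this requires separately checking that the formula remains valid on the boundary cases $k=0$ and $k=n$, which the polytope argument sidesteps entirely. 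I would therefore present the polytope version as the proof and relegate the formula version, if anything, to a remark.
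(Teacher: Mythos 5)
Your proof is correct and takes essentially the same route as the paper: both argue via the polytope interpretation of $p_S$. The paper's version is terser — it merely observes that $p_S(n,n+k)$ counts lattice points in a convex polytope (pointing to Region III of Figure~\ref{fig:chambers}) and declares the function increasing — whereas you supply the missing step, namely the inclusion $P_{n,n+k}\subseteq P_{n+1,n+1+k}$ that actually justifies monotonicity (lattice-point counts of polytopes are not automatically monotone in a parameter without such an inclusion). You also make the useful observation that the hypothesis $k\le n$, and hence the Region~III case split, is not needed for the argument at all: the nested-polytope proof works for every $k\ge 0$. This is a genuine simplification of the paper's reasoning, which passes through the chamber decomposition unnecessarily, and it has the added benefit of avoiding any boundary-case checks at $k=0$ or $k=n$. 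Your extra new-lattice-point argument $(n+1,0)$ upgrades the conclusion to strict monotonicity, which is more than the paper claims or needs (the lemma is invoked only to conclude $p_S(n_1,n_1+k)-p_S(n_2,n_2+k)\le 0$), but it costs nothing. The one thing to keep in mind is that the lemma's stated hypothesis $0\le k\le n$ does serve a documentary purpose in the paper — it records the regime in which the lemma is applied in Lemma~\ref{Case1Bi}, Case~3 — so your framing of it as a record of usage rather than a logical necessity is exactly right.
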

\begin{proof}  From Section~\ref{F22}, we see that the conditions on    $k,n,$ imply that $p_S(n, n+k)$ corresponds to Region III in Figure~\ref{fig:chambers}. 
As before, since the function $p_S(n, n+k)$ counts lattice points in a convex polytope (see the third figure in Figure~\ref{fig:polytopes}), it is an increasing function of $n.$ \end{proof}

Consider first the monomial $+y^{a_0+1}x^{a_i}, i=1,2.$ 
Note the crucial fact that from the dependency relations, if either of these monomials contributes a nonzero coefficient, so does the preceding \textit{negative} monomial $-y^{a_0+1}x^{b-1}.$
\begin{lemma} Let $i=1,2.$ Then the net contribution of the monomials 
$y^{a_0+1}(-x^{b-1}+x^{a_i})$ to $g_{\mu,\nu,\lambda}$ is negative or zero. 
\begin{proof}
The value  contributed to $g_{\mu,\nu,\lambda}$ 
by the monomial $+y^{a_0+1}x^{a_i}, i=1,2,$ is the coefficient 
$[x^{\mu_2} y^{\nu_2}]$ in the product $+y^{a_0+1}x^{a_i}\bar{F}_{2,2},$ which in turn is given by the vector partition function
\begin{equation}\label{pS1}p_S(\nu_2-(a_0+1), \nu_2-(a_0+1)+(\mu_2-a_i)).\end{equation}
On the other hand, the contribution from the negative monomial 
$-y^{a_0+1}x^{b-1}$ was shown in the proof of Theorem~\ref{7termF22Alternant} to be coming from Region II in Figure~\ref{fig:chambers}.  It therefore contributes the value
\begin{equation}\label{pSneg2}-p_S(\nu_2-(a_0+1), \nu_2-(a_0+1)+(\mu_2-b+1))=-{\nu_2-(a_0+1)\choose 2}.\end{equation}

But now Lemma~\ref{pSboundBinomCoeff} says the net contribution 
of these two monomials is negative or zero, 
as claimed.
\end{proof}
\end{lemma}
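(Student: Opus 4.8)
The plan is to push the statement entirely onto the monotonicity of the Ehrhart function $p_S(n,m)$, and to exploit the feature of the dependency digraph in Figure~\ref{fig:DepGraph} that $-y^{a_0+1}x^{b-1}$ dominates each of $+y^{a_0+1}x^{a_1}$ and $+y^{a_0+1}x^{a_2}$: whenever one of the latter makes a nonzero contribution, so does the former.

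First I would record the three coefficient extractions explicitly. Under the substitution $x\mapsto s_1$, $y\mapsto s_0s_1$ of Theorem~\ref{thm:AlwaysVecPart} and Proposition~\ref{barF22}, the monomial $+y^{a_0+1}x^{a_i}$ contributes to $g_{\mu,\nu,\lambda}$ the value $p_S\big(\nu_2-(a_0+1),\,(\nu_2-a_0-1)+(\mu_2-a_i)\big)$ when $\nu_2\ge a_0+1$ and $\mu_2+\nu_2\ge a_0+1+a_i$, and $0$ otherwise; likewise $-y^{a_0+1}x^{b-1}$ contributes $-p_S\big(\nu_2-(a_0+1),\,(\nu_2-a_0-1)+(\mu_2-b+1)\big)$ when $\nu_2\ge a_0+1$ and $\mu_2+\nu_2\ge a_0+b$. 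The point to notice is that both $p_S$'s have the \emph{same} first argument $n':=\nu_2-(a_0+1)$.

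Next I would verify the domination. The condition $\nu_2\ge a_0+1$ is shared, and from $b\le a_0<\{a_1,a_2\}$ in \eqref{expIneq} one gets $a_0+b<a_0+1+a_i$, so the second condition for $-y^{a_0+1}x^{b-1}$ follows from that for $+y^{a_0+1}x^{a_i}$; hence the negative monomial contributes whenever the positive one does. Then I would place $-y^{a_0+1}x^{b-1}$ in the chamber complex of Figure~\ref{fig:chambers}: writing $m':=n'+(\mu_2-b+1)$ we have $m'-2n'=(\mu_2-\nu_2)+(a_0+2-b)\ge 2$ since $\mu_2\ge\nu_2$ and $a_0\ge b$, so it lies in Region~II and contributes exactly $-\binom{n'+2}{2}$. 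Finally, Lemma~\ref{pSboundBinomCoeff} (taking the $m'$ there to be $2n'$) gives $p_S(n',\,\cdot\,)\le\binom{n'+2}{2}$ no matter which chamber the positive monomial's evaluation lands in, so the net contribution of $y^{a_0+1}(-x^{b-1}+x^{a_i})$ is at most $\binom{n'+2}{2}-\binom{n'+2}{2}=0$; and if $+y^{a_0+1}x^{a_i}$ contributes nothing the net contribution is $-\binom{n'+2}{2}\le 0$ or $0$. In fact one reads off that the net value is $0$ precisely when $+y^{a_0+1}x^{a_i}$ itself lies in Region~II, i.e.\ $\mu_2-\nu_2\ge a_i-a_0-1$, and is strictly negative otherwise.

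The main obstacle is simply the chamber bookkeeping: one must be careful to apply Lemma~\ref{pSboundBinomCoeff} with the parameter equal to the common first argument $n'$, and to check that the degenerate possibility $n'<0$ — which annihilates \emph{both} monomials at once — does not spoil the pairing. Beyond that, the argument uses only the orderings \eqref{yInequality} and \eqref{xInequality} already in hand, which in turn rest solely on $\lambda_1\ge\lambda_2\ge\lambda_3\ge\lambda_4$.
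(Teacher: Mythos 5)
Your proposal is correct and follows essentially the same route as the paper: extract the two contributions as values of $p_S$ sharing the common first argument $n'=\nu_2-(a_0+1)$, place $-y^{a_0+1}x^{b-1}$ in Region~II so that it contributes $-\binom{n'+2}{2}$, and then invoke Lemma~\ref{pSboundBinomCoeff} to bound the positive contribution by that same binomial coefficient. The only differences are cosmetic: you re-derive the Region~II placement from scratch (via $m'-2n'=(\mu_2-\nu_2)+(a_0+2-b)\ge 2$) rather than citing the earlier proof of Theorem~\ref{7termF22Alternant}, and you explicitly flag the degenerate case $n'<0$ and the vanishing-contribution case, both of which the paper leaves implicit.
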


However, this is of course not sufficient to establish our theorem,  because \textit{both} positive monomials  $+y^{a_0+1}x^{a_i}, i=1,2$ can make a nonzero contribution. 
Appealing to the dependency relations, we see that a positive contribution from $+y^{a_0+1}x^{a_i}$ forces a 
 negative contribution from the monomial 
$-y^b x^{a_i},$ for each $i=1,2.$ 

\begin{lemma}\label{Case1A}  If $\mu_2-a_i\le 0,$ then the net contribution of $+y^{a_0+1}x^{a_i}$ and $-y^b x^{a_i}$ is negative or zero.  
\end{lemma}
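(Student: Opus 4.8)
The plan is to read the two contributions straight off Theorem~\ref{7termF22Alternant} and then compare them using the chamber description of $p_S$ in Figure~\ref{fig:chambers}. First I would fix notation: write $n_1=\nu_2-(a_0+1)$, $n_2=\nu_2-b$, and $m_j=n_j+(\mu_2-a_i)$ for $j=1,2$. By the extraction rule recorded in the proof of Theorem~\ref{7termF22Alternant}, the monomial $+y^{a_0+1}x^{a_i}$ contributes $p_S(n_1,m_1)$ to $g_{\mu,\nu,\lambda}$ when $n_1,m_1\ge 0$ (this is exactly Eqn.~\eqref{pS1}) and contributes nothing otherwise, while $-y^bx^{a_i}$ contributes $-p_S(n_2,m_2)$ when $n_2,m_2\ge 0$ and nothing otherwise. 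Hence the net contribution of the pair is $p_S(n_1,m_1)-p_S(n_2,m_2)$ when both are active, and otherwise is either $0$ or $-p_S(n_2,m_2)\le 0$; so it suffices to treat the case $n_1,m_1\ge 0$.

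Next I would use the order relation $b\le a_0$ from Eqn.~\eqref{expIneq} to note that the negative monomial is automatically active once the positive one is: indeed $n_2=n_1+(a_0+1-b)\ge n_1\ge 0$ and $m_2=m_1+(a_0+1-b)\ge m_1\ge 0$, and moreover $m_2-m_1=a_0+1-b\ge 1$, so $0\le m_1<m_2$ and the net contribution is exactly $p_S(n_1,m_1)-p_S(n_2,m_2)$. The heart of the argument is then the hypothesis $\mu_2-a_i\le 0$: it forces $m_j=n_j+(\mu_2-a_i)\le n_j$ for both $j$, so $(n_1,m_1)$ and $(n_2,m_2)$ both lie in Region~I of the chamber complex of Figure~\ref{fig:chambers}, where $p_S(n,m)$ equals the quasipolynomial $p_{\{1,1,2\}}(m)$ of Example~\ref{FirstExample}: a function of $m$ alone, and non-decreasing in $m$ because it counts lattice points in dilations of a $2$-simplex. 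Consequently $p_S(n_1,m_1)=p_{\{1,1,2\}}(m_1)\le p_{\{1,1,2\}}(m_2)=p_S(n_2,m_2)$, so $p_S(n_1,m_1)-p_S(n_2,m_2)\le 0$, which is what the lemma asserts. If one wants the strict inequality in this (generic) case, note that in Region~I one has $p_S(n,m)=\tfrac{m^2}{4}+m+c(m)$, so the difference equals $(m_1-m_2)\bigl(\tfrac{m_1+m_2}{4}+1\bigr)+\bigl(c(m_1)-c(m_2)\bigr)\le -1+\tfrac14<0$.

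I do not expect a serious obstacle here; the statement is really a bookkeeping matter once the correct chamber is identified. The two points that need a little care are (i) checking that an active positive monomial forces its matching negative monomial to be active, which is supplied precisely by the order relation $b\le a_0$ of Eqn.~\eqref{expIneq}, and (ii) recognising that the hypothesis $\mu_2-a_i\le 0$ is exactly the condition placing both lattice-point counts in the chamber on which $p_S$ depends on its second coordinate alone and is monotone there, so that no finer estimate on $p_S$ is required beyond its Region~I monotonicity.
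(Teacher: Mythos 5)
Your proof is correct and follows essentially the same route as the paper: identify the two contributions via $p_S$, observe that $\mu_2-a_i\le 0$ forces both points $(n_j,m_j)$ into Region~I where $p_S$ depends only on its second argument and is non-decreasing there, and then use $b\le a_0$ from Eqn.~\eqref{expIneq} to get $m_1<m_2$. The extra bookkeeping about "active" monomials and the closing strict-inequality computation are harmless additions, but the core argument matches the paper's.
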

\begin{proof} The contribution of $+y^{a_0+1}x^{a_i}$ is given by 
the vector partition function Eqn.~\eqref{pS1}, while that of $-y^b x^{a_i}$ is given by 
\begin{equation}\label{pSneg1}-p_S(\nu_2-b, \nu_2-b+(\mu_2-a_i)).\end{equation}
Because $\mu_2-a_i\le 0,$ in each case we have a vector partition function of the form $p_S(n,m)$ where $m<n$.  Hence each vector partition function corresponds to Region I in Figure~\ref{fig:chambers}.  But that function is clearly an increasing function of its second argument, $m.$  Also, we know from the inequalities~\eqref{expIneq} 
above that $\nu_2-(a_0+1)+(\mu_2-a_i)<\nu_2-b+(\mu_2-a_i).$  Hence the claim follows.
\end{proof}

\begin{lemma}\label{Case1Bi}  If $\mu_2-a_i> 0,$ then the net contribution of $+y^{a_0+1}x^{a_i}$ and $-y^b x^{a_i}$ is negative or zero.
\end{lemma}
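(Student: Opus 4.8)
The plan is to deduce the statement from the monotonicity of the vector partition function $p_S(n,m)$ along a ray of the form $m=n+k$. First I would record the two contributions in the form already isolated in the proof of Lemma~\ref{Case1A}: writing $k:=\mu_2-a_i>0$, the positive monomial $+y^{a_0+1}x^{a_i}$ contributes the quantity~\eqref{pS1}, namely $p_S(n_1,n_1+k)$ with $n_1:=\nu_2-(a_0+1)$, while $-y^bx^{a_i}$ contributes $-p_S(n_2,n_2+k)$ with $n_2:=\nu_2-b$, as in~\eqref{pSneg1}. If the positive monomial makes no contribution the claim is immediate, so assume the contrary; by Theorem~\ref{7termF22Alternant} this forces $n_1\ge 0$ and $n_1+k\ge 0$. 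Since $\lambda$ is a partition, $b=\lambda_3+\lambda_4\le\lambda_2+\lambda_4=a_0$, so $n_2=\nu_2-b\ge n_1+1>n_1\ge 0$; in particular $n_2\ge 0$ and $n_2+k\ge 0$, so $-y^bx^{a_i}$ also satisfies the conditions of Theorem~\ref{7termF22Alternant} and does contribute — this is precisely the dependency arrow $+y^{a_0+1}x^{a_i}\to-y^bx^{a_i}$ of Figure~\ref{fig:DepGraph}. It therefore suffices to prove the inequality $p_S(n_1,n_1+k)\le p_S(n_2,n_2+k)$.

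The key point is that for each fixed $k\ge 0$ the map $n\mapsto p_S(n,n+k)$ is weakly increasing on $\NN$. I would prove this by cutting at $n=k$ using the chamber description of Figure~\ref{fig:chambers}: for $0\le n\le k$ one has $n+k\ge 2n$, so $p_S(n,n+k)$ equals the Region~II value $\frac{n^2}{2}+\frac{3n}{2}+1=\binom{n+2}{2}$, which visibly increases with $n$; for $n\ge k$ one has $n\le n+k\le 2n$, so $p_S(n,n+k)$ is given by the Region~III quasipolynomial and is increasing in $n$ by Lemma~\ref{pSRegionIII}. At the junction $n=k$ both expressions equal $\binom{k+2}{2}$, the number of lattice points of the simplex $\{x_3\ge 0,\ x_4\ge 0,\ x_3+x_4\le k\}$, so the two pieces glue into a weakly increasing function on all of $\NN$. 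Applying this with $n_1\le n_2$ yields $p_S(n_1,n_1+k)\le p_S(n_2,n_2+k)$, hence the net contribution $p_S(n_1,n_1+k)-p_S(n_2,n_2+k)$ is $\le 0$, as required.

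The only delicate part is the bookkeeping at the wall $n=k$: one must verify that the ray $m=n+k$ crosses from the chamber $\{2n\le m\}$ into the chamber $\{n\le m\le 2n\}$ exactly at $n=k$, and that the local formulas match monotonically there. This is routine, and it is exactly the range $0\le k\le n$ in which Lemma~\ref{pSRegionIII} applies, the complementary range $0\le n\le k$ being handled by the elementary Region~II formula. Alternatively, one can avoid stating the global monotonicity and argue directly by cases according to whether $k$ lies in $[0,n_1]$, $(n_1,n_2]$, or $(n_2,\infty)$, invoking Lemma~\ref{pSRegionIII} in the first case and comparing against $\binom{n_1+2}{2}$ via Lemma~\ref{pSRegionIII} together with the boundary value $p_S(n,2n)=\binom{n+2}{2}$ in the remaining two; I expect no genuine obstacle beyond this.
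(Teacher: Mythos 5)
Your proof is correct, and it is a genuine (if mild) simplification of the paper's argument. The paper splits into three cases according to the position of $k=\mu_2-a_i$ relative to $n_1=\nu_2-(a_0+1)$ and $n_2=\nu_2-b$: both points in Region II, both in Region III, or one in each (the cross-chamber case). For the cross-chamber case $n_1<k<n_2$ the paper carries out an explicit computation with the Region~III quasipolynomial, including a derivative estimate, to verify the net contribution is negative. You instead prove once and for all that $n\mapsto p_S(n,n+k)$ is weakly increasing on all of $\NN$, by checking that the Region~II piece $\binom{n+2}{2}$ on $[0,k]$ and the Region~III piece on $[k,\infty)$ (increasing by Lemma~\ref{pSRegionIII}) agree at the wall $n=k$, both equaling $\binom{k+2}{2}$. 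This single gluing observation subsumes all three of the paper's cases and eliminates the explicit Case~2 calculation, and it also makes the geometric content --- the ray $m=n+k$ traverses a growing family of polytopes --- transparent. Your preliminary reduction (if the positive monomial fails to contribute the claim is trivial, otherwise $n_1\ge 0$ forces $n_2>n_1\ge 0$ and $n_2+k>0$ so the negative monomial also contributes, as in the dependency digraph) is also handled correctly, and in fact is a point the paper's own proof does not spell out as carefully.
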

\begin{proof} We must again carefully examine the respective contributions of these two monomials, which are 
\begin{equation}p_S(\nu_2-(a_0+1), \nu_2-(a_0+1)+(\mu_2-a_i)).\end{equation}
and 
\begin{equation}-p_S(\nu_2-b, \nu_2-b+(\mu_2-a_i).\end{equation}
Each function above is of the form $p_S(n,m)$ where $n<m,$ so it is evaluated according to the formula for Region II or Region III in Figure~\ref{fig:chambers}. 
We know $\nu_2-(a_0+1)<\nu_2-b.$
We have three cases to consider:

\noindent
\textbf{Case 1:} Assume $\nu_2-(a_0+1)<\nu_2-b<\mu_2-a_i.$
Then each vector partition function above corresponds to Region II in 
Figure~\ref{fig:chambers}, given by a binomial coefficient so the net contribution is a difference of two binomial coefficients
${\nu_2-(a_0+1)+2\choose 2}-{\nu_2-b+2\choose 2},$
and this is clearly negative in view of the inequality~\eqref{expIneq}.

\noindent
\textbf{Case 2:} Assume $0\leq \nu_2-(a_0+1)< \mu_2-a_i
<\nu_2-b.$  Set $\mu_2-a_i=k, n_1=\nu_2-(a_0+1), n_2=\nu_2-b.$  
Thus we have $0\le n_1<k<n_2.$   In particular, $n_1-n_2\leq -2.$

Since $2n_1<n_1+k,$ we know that $p_S(n_1, n_1+k),$ which is the value of the contribution from the monomial $y^{a_0+1}x^{a_i}$, 
is specified by Region II in Figure~\ref{fig:chambers}, and is therefore given by the binomial coefficient ${n_1+2\choose 2}.$ 

Since $n_2\in (\frac{n_2+k}{2}, n_2+k),$  we conclude similarly that 
the contribution from the monomial $y^{b}x^{a_i}$ is given by computing $p_S(n_2, n_2+k)$ using the formula for Region III.

Hence, using the expression for $p_S(n,n+k)$ for Region III in Figure~\ref{fig:chambers}, the net contribution of $-y^b x^{a_i}+y^{a_0+1} x^{a_i}$ is given by 
\begin{align*} &p_S(n_1, n_1+k)-p_S(n_2, n_2+k)\\
&=\frac{n_1^2+3n_1+2}{2} -\left(\frac{n_2^2}{4}+n_2(\frac{k}{2}+1) -\frac{k^2}{4}+\frac{k}{2}+c(n_2+k) \right)
\end{align*}
Consider the function 
$f(k)=\frac{n_1^2+3n_1+2}{2} -\left(\frac{n_2^2}{4}+n_2(\frac{k}{2}+1) -\frac{k^2}{4}+\frac{k}{2} \right),$ 
 a polynomial in $k.$ 
It is easy to check that $f'(k)=\frac{1}{2}(k-1-n_2)\leq -1 \text{ when } k<n_2,$ 
and hence this is a decreasing function of $k$ with maximum value $f(n_1)$ in the interval $[n_1, n_2-1].$ But 
\begin{align*} &f(n_1)=\frac{n_1^2}{2}-\frac{n_2^2}{4}+\frac{3n_1}{2}-\frac{n_1n_2}{2}-n_2+1+\frac{n_1^2}{4}-\frac{n_1}{2}\\
&=\frac{n_1^2-n_2^2}{4}+(n_1-n_2) +\frac{n_1^2}{2}-\frac{n_1n_2}{2}+1\\
&=(n_1-n_2)[\frac{n_1+n_2}{4}+1+\frac{n_1}{2}]+1
\end{align*}
Since $n_1-n_2\leq -2,$ and the expression in square brackets is at least $\frac{5}{4},$ we see that $f(k)\leq f(n_1+1)<f(n_1)<-\frac{3}{2}.$ 
To find the net contribution of the two monomials, we need to add the value of $c(n_2+k).$ But this is at most 1. It follows that the net contribution is negative.

\noindent
\textbf{Case 3:}   Assume $0 < \mu_2-a_i\leq \nu_2-(a_0+1)
<\nu_2-b.$
Again set $\mu_2-a_i=k, n_1=\nu_2-(a_0+1), n_2=\nu_2-b.$  
The contribution of the monomial $+y^{a_0+1}x^{a_i}$ is $p_S(n_1, n_1+k)$ while that of the monomial $-y^bx^{a_i}$ is 
$-p_S(n_2, n_2+k).$  The inequalities imply that the function $p_S$ corresponds to Region III in Figure~\ref{fig:chambers} in both cases.  Hence Lemma~\ref{pSRegionIII} applies (because $0< k\leq n_1<n_2$), showing that 
the net contribution, $p_S(n_1, n_1+k)-p_S(n_2, n_2+k),$ is indeed negative or zero. 
\end{proof}

It remains to consider what happens when the last monomial with positive coefficient in the first line of $P_\lambda,$ $y^b x^{a_3},$ contributes to the Kronecker coefficient.  From the dependency relations, we know that then all the monomials  $y^b x^{a_i}$ must contribute nonzero terms as well, and possibly also one or both monomials 
$y^{a_0+1}x^{a_i}$.   In the latter  case there is also necessarily a negative contribution from $-y^{a_0+1}x^{b}.$

\begin{lemma}\label{lastmonomial}
For each of $i=1,2,$ the net contribution of the two monomials $-y^bx^{a_i}+y^bx^{a_3}$ is always negative or zero.  
\end{lemma}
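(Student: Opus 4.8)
The plan is to reduce Lemma~\ref{lastmonomial} to the (essentially trivial) monotonicity of the vector partition function $p_S(n,m)$ in its second argument, following the same strategy used for the earlier lemmas in this section. First I would record the exact contributions to $g_{\mu,\nu,\lambda}$ of the two monomials in question. As in the proof of Theorem~\ref{7termF22Alternant}, the substitution $x\mapsto s_1$, $y\mapsto s_0s_1$ together with Proposition~\ref{barF22} and Section~\ref{F22} shows that a signed monomial $\pm y^b x^a$ of $P_\lambda$ contributes $\pm\,p_S(\nu_2-b,\ \mu_2+\nu_2-a-b)$ to $g_{\mu,\nu,\lambda}$ when $\nu_2-b\ge 0$ and $\mu_2+\nu_2-a-b\ge 0$, and contributes $0$ otherwise. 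Thus $+y^b x^{a_3}$ contributes $p_S(\nu_2-b,\ \mu_2+\nu_2-a_3-b)$ or $0$, and $-y^b x^{a_i}$ contributes $-p_S(\nu_2-b,\ \mu_2+\nu_2-a_i-b)$ or $0$.

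Next I would split into two cases. If $+y^b x^{a_3}$ does not make a nonzero contribution, its contribution is $0$, while that of $-y^b x^{a_i}$ is either $0$ or $-p_S(\ldots)\le 0$ (a vector partition function is nonnegative), so the net contribution is nonpositive. If $+y^b x^{a_3}$ does make a nonzero contribution, then $\nu_2-b\ge 0$ and $\mu_2+\nu_2-a_3-b\ge 0$; since $a_i<a_3$ by~\eqref{expIneq}, this forces $\mu_2+\nu_2-a_i-b>\mu_2+\nu_2-a_3-b\ge 0$, so $-y^b x^{a_i}$ also makes a nonzero contribution --- this is precisely the dependency edge $y^b x^{a_3}\to y^b x^{a_i}$ of Figure~\ref{fig:DepGraph}. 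The net contribution of the pair is then
\[
p_S(\nu_2-b,\ \mu_2+\nu_2-a_3-b)\;-\;p_S(\nu_2-b,\ \mu_2+\nu_2-a_i-b),
\]
a difference of two values of $p_S$ with the same nonnegative first argument, the first of which has the strictly smaller second argument (again because $a_i<a_3$).

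To finish I would invoke the monotonicity of $p_S(n,\cdot)$: by Section~\ref{F22}, $p_S(n,m)$ counts lattice points in the polytope $\{(x_3,x_4):x_3,x_4\ge 0,\ x_3+x_4\le n,\ x_3+2x_4\le m\}$, and enlarging $m$ only relaxes a constraint, so $p_S(n,m)\le p_S(n,m')$ whenever $0\le m\le m'$. Applied to the displayed difference this yields a nonpositive value, which proves the lemma. I do not anticipate a real obstacle here: the only points requiring care are the bookkeeping of which monomials are active (handled by~\eqref{expIneq} and the digraph) and the remark that this monotonicity of $p_S$ is uniform across the chamber walls of Figure~\ref{fig:chambers}, which follows at once from the polytope description without appealing to the explicit piecewise formulas. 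Since in both scenarios of the proof of Theorem~\ref{thm:AtomicIsMax} the positive monomial $+y^b x^{a_3}$ is offset by $-y^b x^{a_i}$ for one of $i=1,2$, this lemma supplies exactly the last offsetting pair needed there.
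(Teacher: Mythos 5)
Your argument is correct, and it is genuinely simpler than the paper's. The paper's proof of Lemma~\ref{lastmonomial} splits into six cases according to where $n=\nu_2-b$ falls relative to $m_3/2$, $m_3$, $m_i/2$, $m_i$, and in each case either compares explicit quasipolynomial formulas for Regions I, II, III of Figure~\ref{fig:chambers} or invokes the auxiliary Lemmas~\ref{pSboundBinomCoeff} and~\ref{pSRegionIII2nd}. You instead observe the one fact that renders all six cases uniform: $p_S(n,m)$ counts lattice points in $\{(x_3,x_4)\ge 0 : x_3+x_4\le n,\ x_3+2x_4\le m\}$, so for fixed $n$ it is nondecreasing in $m$ regardless of chamber walls. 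Once you note (as you do) that if $+y^bx^{a_3}$ contributes then so does $-y^bx^{a_i}$ by the dependency relation $a_i<a_3$, the net contribution is $p_S(n,m_3)-p_S(n,m_i)$ with $m_3<m_i$, and monotonicity finishes it. This is the same monotonicity-from-polytope idea the paper actually uses inside Lemma~\ref{pSRegionIII2nd}, but applied globally rather than chamber by chamber; the paper's longer argument does not yield a stronger statement here, so your shorter route is a clean improvement, and your explicit remark that the polytope description makes the monotonicity wall-independent is exactly the point that justifies skipping the case analysis.
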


\begin{proof}  Set $n=\nu_2-b, m_i=n+(\mu_2-a_i), i=1,2,3.$ The contribution of $y^bx^{a_3}$ is $p_S(n, m_3)$, and that of 
$-y^bx^{a_i}, i=1,2,$ is $-p_S(n, m_i).$  Note that $m_3<m_i, i=1,2,$ in view of ~\eqref{expIneq}.

We will examine the behaviour of the function $p_S(n,m)$ according to where $n$ falls in each of the intervals below.  Although there are two categories:

\begin{center} $0<\frac{m_3}{2}<m_3\leq \frac{m_i}{2}<m_i,$
or
$0<\frac{m_3}{2}< \frac{m_i}{2}<m_3<m_i,$\end{center}
  both can be treated by the same arguments, because the same difference of vector partition functions $p_S(n,m)$ comes into play in each case.

\noindent
\textbf{Case 1:} 
If $n\leq \frac{m_3}{2},$ then in either category, both $p_S(n, m_3)$ and $p_S(n, m_i)$ are computed by the formula for Region II in Figure~\ref{fig:chambers}, and hence both equal the binomial coefficient ${n+2\choose 2}.$ The net contribution of $-y^bx^{a_i}+y^bx^{a_3}$ here is zero.

\noindent
\textbf{Case 2:} If $n>m_i,$ then in either category, both $p_S(n, m_3)$ and $p_S(n, m_i)$ are computed by the formula for Region I in Figure~\ref{fig:chambers}. But the quasipolynomial for Region I is clearly an increasing function of the second argument of $p_S,$ and hence, (since $m_3<m_i$), $-p_S(n, m_i)+p_S(n, m_3)$ is negative or zero.  

\noindent
\textbf{Case 3:} Suppose $\frac{m_i}{2}\leq n\leq m_3. $ Then both functions $p_S$ correspond to Region III, 
and Lemma~\ref{pSRegionIII2nd} applies directly to show that $-p_S(n, m_i)+p_S(n, m_3)$ is negative or zero. 

\noindent
\textbf{Case 4:} Suppose $m_3\leq n\leq  \frac{m_i}{2}. $ Then 
the monomial $-y^bx^{a_i}$ contributes $p_S(n, m_i)$ which is now a binomial coefficient since $n\leq \frac{m_i}{2}.$  By Lemma~\ref{pSboundBinomCoeff}, the net contribution here is negative or zero.  

\noindent
\textbf{Case 5:}  Suppose 
$0<\frac{m_3}{2}<m_3\leq \frac{m_i}{2}<n<m_i.$

We need to examine the difference 
$p_S(n,m_3)-p_S(n,m_i),$ where the first function corresponds to Region I and the second to region III.
We will consider the function 
$f(n)=p_S(n,m_3)-p_S(n,m_i)$ on the interval $[m_i/2, m_i].$ 
We have 
\[f(n)=\frac{m_3^2}{4}+m_3+c(m_3) 
-(nm_i-\frac{n^2}{2}-\frac{m_i^2}{4}+\frac{n+m_i}{2}+c(m_i)).\]
One checks that $f'(n)=-(m_i-n+1/2)\le -\frac{3}{2},$ and hence the function is decreasing with maximum at $\frac{m_i}{2}.$
This value is checked to be 
\[f(\frac{m_i}{2})= \frac{1}{4}(m_3^2-\frac{m_i^2}{2}) + (m_3-\frac{3m_i}{4}) +c(m_3)-c(m_i).\]
But $c(m_3)-c(m_i)\leq \frac{1}{4}$ as before, and we have $m_3<m_i.$  Hence $f(\frac{m_i}{2})<-\frac{1}{2}$ is negative, 
and so is $f(n).$

\noindent
\textbf{Case 6:}  Suppose 
$0<\frac{m_3}{2}< \frac{m_i}{2}<m_3<n<m_i.$ 
Exactly the same argument applies to this case, since we still have 
$m_3<n<m_i,$ which was the only inequality we used in the preceding argument.   This completes the proof of the lemma.
\end{proof}

\section*{Acknowledgements}
 The authors are grateful to Mich{\`e}le Vergne for in-depth discussions  and comparisons to previous work; in particular, for pointing out to us that remarkably, the vector partition function $F_{n,m}$ that we obtain from our elementary approach is in fact implicit in the work of Baldoni, Vergne and Walter \cite{BaldoniVergneWalter} and  \cite{SzenesVergne}.  
 We are grateful to Francesco Iachello for pointing out the relevance of the restriction of $GL(4)$ to $GL(2)\times GL(2)$ studied in this paper to nuclear physics.

This research was initiated at the Banff International Research Station in the Women in Algebraic Combinatorics II workshop.  It was further supported by the  National Science Foundation under Grant No. DMS-1440140 while the  first and third authors were in residence at the Mathematical Sciences Research
Institute in Berkeley, California, during the summer of 2017. The authors express their gratitude for the support of these two sources, without which this work would not have been possible. The research of MM is partially supported by National Science and Engineering Research Council Discovery Grant RGPIN-04157. During the completion of this work, she was hosted by Institut Denis Poisson (Tours, France) and LaBRI (Bordeaux, France).  The research of MR is partially supported by MTM2016-75024-P, FEDER, and the Junta de Andalucia under grants P12-FQM-2696 and FQM-333.
  
\def\cprime{$'$}

{\small\noindent{Marni Mishna, Dept. Mathematics, Simon Fraser University, Burnaby Canada {\tt mmishna$@$sfu.ca}}\\
\noindent{Mercedes Rosas, Dept. Algebra, Universidad de Sevilla, Sevilla  Espa\~na {\tt mrosas$@$us.es}}\\
\noindent{Sheila Sundaram, Pierrepont School, Westport, CT, USA {\tt shsund$@$comcast.net}}
}
\end{document}